\newtheorem{theorem}{Theorem}
\newtheorem{lemma}[theorem]{Lemma}
\newtheorem{corollary}[theorem]{Corollary}
\newtheorem{proposition}[theorem]{Proposition}
\newtheorem{definition}[theorem]{Definition}
\newtheorem{example}[theorem]{Example}
\newtheorem{remark}[theorem]{Remark}
\newtheorem{algorithm}[theorem]{Algorithm}
\begin{document}

\begin{frontmatter}

\title{Gr\"{o}bner--Shirshov bases for commutative dialgebras$^*$}

\author{Yuqun Chen}
\address{School of Mathematical Sciences, South China Normal University, Guangzhou 510631, P. R. China}
\ead{yqchen@scnu.edu.cn}
\ead{$^*$Supported by the NNSF of China (11571121), the NSF of Guangdong Province (2017A030313002) and the Science and Technology Program of Guangzhou (201707010137)}

\author{Guangliang Zhang}
\address{School of Mathematics and Systems Science, Guangdong Polytechnic Normal University, Guangzhou 510631, P. R. China}
\ead{zgl541@163.com}

\begin{abstract}
We establish Gr\"{o}bner--Shirshov bases theory for commutative dialgebras.
We show that for any ideal $I$ of $Di[X]$, $I$ has a unique reduced Gr\"{o}bner--Shirshov basis, where $Di[X]$ is the free commutative dialgebra generated by a set $X$, in particular, $I$ has a finite Gr\"{o}bner--Shirshov basis if $X$ is finite. As applications,
we give normal forms of elements of an arbitrary commutative disemigroup,
prove that the word problem for finitely presented commutative dialgebras (disemigroups) is solvable, and
show that if  $X$ is finite, then the problem
whether two ideals  of $Di[X]$ are identical is solvable.
We construct a Gr\"{o}bner--Shirshov basis in associative dialgebra $Di\langle X\rangle$ by
lifting a Gr\"{o}bner--Shirshov basis in $Di[X]$.
\end{abstract}

\begin{keyword}
commutative dialgebra, commutative disemigroup, Gr\"{o}bner--Shirshov basis, normal form, word problem
\end{keyword}
\end{frontmatter}

\section{Introduction}

Recently the study of algebraic properties of dialgebras  has attracted considerable attention.
Dialgebras are vector spaces over a field equipped with two binary bilinear associative operations
satisfying some axioms. Moreover, if operations of a dialgebra coincide,
we obtain an associative algebra and so, dialgebras are a generalization of associative algebras.
The class of dialgebras is rather interesting despite the lack of simple examples distinct from the associative algebras.
It is well-known that for Lie algebras there is a notion of a universal enveloping associative algebra.
By the Poincar$\acute{e}$-Birkhoff-Witt theorem,
to a given Lie algebra $L$ there exists an associative algebra $A$
such that $L$ is isomorphic to a subalgebra of the Lie algebra $A^{(-)}$.
The universal enveloping algebra for Leibniz
algebras, which are a non-commutative variation of Lie algebras, was found in \cite{Lo93,Lo95}.
Dialgebras serve as these enveloping algebras.
In \cite{Di}, a Composition-Diamond lemma for dialgebras was given to obtain
normal forms for some dialgebras including the universal enveloping algebra for Leibniz
algebras. \cite{Po} studied the connection of Rota-Baxter algebras and dialgebras with associative bar-unity.
\cite{Ko} proved that each dialgebra may be obtained in turn from an associative conformal algebra.
Lately normal forms of free commutative dialgebras were found by \cite{Zhuchok}, \cite{CZ}.

Gr\"{o}bner bases and Gr\"{o}bner--Shirshov bases were invented independently by A.I. Shirshov
for ideals of free (commutative, anti-commutative) non-associative algebras in \cite{Sh62b,Shir3},
free Lie algebras in \cite{Shir3} and implicitly free associative algebras in \cite{Shir3} (see also \cite{be78,bo76}), by
\cite{Hi64} for ideals of the power series algebras (both formal and convergent),
and by \cite{bu70} for ideals of the polynomial algebras. Gr\"{o}bner bases and
Gr\"{o}bner--Shirshov bases theories have been proved to be very useful in different branches
of mathematics, including commutative algebra and combinatorial algebra. It is a powerful
tool to solve the following classical problems: normal form; word problem; conjugacy
problem; rewriting system; automaton; embedding theorem; PBW theorem; extension;
homology; growth function; Dehn function; complexity; etc. See, for example, the books by
\cite{AL,BKu94,BuCL,BuW,CLO,Ei} and the surveys by \cite{BC,BokutChenBook,BoFKK00,BK03,BK05a,bo05}.

In Gr\"{o}bner--Shirshov bases theory for a category of algebras, a key part is to establish
``Composition-Diamond lemma" for such algebras. The name ``Composition-Diamond
lemma" combines the Diamond Lemma in \cite{Nm}, the Composition Lemma in \cite{Sh62b}
 and the Diamond Lemma in \cite{be78}.

In this paper, we  establish Gr\"{o}bner--Shirshov bases theory for commutative dialgebras.
A Composition Diamond lemma for commutative dialgebras is given, see Theorem \ref{cd2}.
We show that for a given monomial-center ordering, each ideal of $Di[X]$ has a unique reduced Gr\"{o}bner--Shirshov basis;
if $X$ is finite, then $Di[X]$ is Noetherian and each ideal of $Di[X]$ has a finite Gr\"{o}bner--Shirshov basis, and an algorithm is given to find such a finite (reduced) Gr\"{o}bner--Shirshov basis. As applications, we give normal forms of elements of an arbitrary commutative disemigroup and prove that for finitely presented commutative dialgebras (disemigroups), the word problem and the problem
whether two ideals  of $Di[X]$ are identical are solvable.
These results will be applied to computer algebra systems,
which are referring to the common use of the Buchberger approach to polynomials.
Moreover, we
prove a theorem on the pair of algebras ($Di[X],Di\langle X\rangle$) following the spirit of Eisenbud-Peeva-Sturmfels' theorem
on the pair ($k[X],k\langle X\rangle$) in \cite{EiPS}. Namely, we construct a Gr\"{o}bner--Shirshov
basis in associative dialgebra $Di\langle X\rangle$ by lifting a given Gr\"{o}bner--Shirshov basis $S$ in commutative dialgebra $Di[X]$.

The paper is organized as follows. In section 2, we review the free commutative dialgebra $Di[X]$ generated by $X$ over a field $\mathbf{k}$.
In section 3, we establish a Composition-Diamond lemma for commutative dialgebras.
In section 4, it is shown that each ideal of a finitely generated polynomial dialgebra has a finite Gr\"{o}bner--Shirshov basis.
Section 5 gives normal forms of commutative disemigroups
and shows the word problem for finitely presented commutative dialgebras (disemigroups) is solvable.
The main results in this section are similar to ones in Gr\"{o}bner bases theory for commutative algebras \citep{bu65,bu70,BuCL,BuW}.
Section 6 provides a method by which we can lift commutative Gr\"{o}bner--Shirshov bases to associative ones.

\ \

\section{Free commutative dialgebras}

Throughout the paper, we fix a field $\mathbf{k}$.
$\mathbb{Z}^+$ stands for the set of positive integers.

\begin{definition}\citep{Lo99}\label{l2}
A \emph{disemigroup (dialgebra)} is a set ($\mathbf{k}$-linear space) $D$ equipped with two maps
$$
\vdash \ : D\times D\rightarrow D,
\ \
\dashv \ : D\times D\rightarrow D,
$$
where $\vdash$ and $\dashv$ are associative and satisfy the following identities:
for all $a,  b,  c\in D,$
\begin{equation}\label{eq00}
\begin{cases}
a\dashv(b\vdash c)=a\dashv (b\dashv c),\\
(a\dashv b)\vdash c=(a\vdash b)\vdash c, \\
a\vdash(b\dashv c)=(a\vdash b)\dashv c.
\end{cases}
\end{equation}

A disemigroup (dialgebra) $(D,\vdash,\dashv)$ is  \emph{commutative} if both  $\vdash$ and $\dashv$ are commutative.

\end{definition}

Let $X=\{x_i|i\in I\}$ be a total-ordered set, $X^+\ (X^*)$ the free semigroup (monoid) generated by $X$,
$$
\lfloor X^+\rfloor:= \{\lfloor x_{i_1}x_{i_2}\cdots  x_{i_n}\rfloor\mid  i_1,\dots,i_n\in I, x_{i_1}\leq x_{i_2}\leq\cdots \leq x_{i_n}, n\in \mathbb{Z}^+\},
$$
the free commutative semigroup without unit generated by $X$, and
$$
\lfloor X^*\rfloor:=\lfloor X^+\rfloor\cup \{\varepsilon\},
$$
the free commutative monoid  generated by $X$, where $\varepsilon$ is the empty word.

For any
$u=x_{j_1} x_{j_2}\cdots x_{j_n}\in X^+,\ x_{j_k}\in X$, we define
$$
\lfloor u \rfloor=\lfloor x_{j_1}x_{j_2}\cdots  x_{j_n}\rfloor:
=\lfloor x_{i_1} x_{i_2}\cdots  x_{i_n}\rfloor, \ x_{i_1}\leq x_{i_2}\leq\dots \leq x_{i_n},
$$
where $\{x_{i_1},x_{i_2},\dots, x_{i_n}\}=\{x_{j_1},x_{j_2},\dots, x_{j_n}\}$ as multisets.

Write
\begin{eqnarray*}
\lfloor X^+ \rfloor_{_1}:&=&\{\lfloor u \rfloor_1 \mid \lfloor u\rfloor\in \lfloor X^+\rfloor\}, \\
\lfloor XX \rfloor_{_{2}}:&=&\{\lfloor v \rfloor_2 \mid \lfloor v\rfloor\in \lfloor X^+\rfloor, |v|=2\}=\{\lfloor xx' \rfloor_2 \mid x,x'\in X,\ x\leq x'\},
\end{eqnarray*}
where $|v|$ is the length of $v$.
For any $h=\lfloor u \rfloor_p \in \lfloor X^+ \rfloor_{_1}\cup \lfloor XX \rfloor_{_{2}}$,
we call $\lfloor u\rfloor$ the \textit{associative (commutative) word} of $h$.
For convenience, we denote $\lfloor u \rfloor_1=u$ if $u\in X$.

\begin{lemma}\citep{Zhuchok,CZ}
Let $X=\{x_i\mid i\in I\}$ be a total-ordered set and
$$
Disgp[X]:=( \lfloor X^+ \rfloor_{_1}\cup \lfloor XX \rfloor_{_{2}}, \ \vdash, \dashv).
$$
 Then $Disgp[X]$ is the free commutative disemigroup generated by $X$,
where the operations $\vdash$ and $\dashv$ are as follows: for any
$x,x'\in X,\ \lfloor u\rfloor_{p_1},\ \lfloor v\rfloor_{p_2}\in \lfloor X^+ \rfloor_{_1}\cup \lfloor XX \rfloor_{_{2}}$ with  $|uv|>2$,
\begin{eqnarray*}
&&\lfloor v\rfloor_{p_2}\vdash \lfloor u\rfloor_{p_1}
=\lfloor u\rfloor_{p_1}\vdash \lfloor v\rfloor_{p_2}=\lfloor u\rfloor_{p_1}\dashv \lfloor v\rfloor_{p_2}
=\lfloor v\rfloor_{p_2}\dashv \lfloor u\rfloor_{p_1}=\lfloor uv\rfloor_1,\\
&&x\dashv x'=x'\dashv x=\lfloor xx'\rfloor_1, \\
&&x\vdash x'=x'\vdash x=\lfloor xx'\rfloor_2.
\end{eqnarray*}

Let $Di[X]$ be the $\mathbf{k}$-linear space with a $\mathbf{k}$-basis $\lfloor X^+ \rfloor_{_1}\cup \lfloor XX \rfloor_{_{2}}$.
Then $(Di[X], \ \vdash, \dashv )$ is the
free commutative dialgebra generated by $X$.
\end{lemma}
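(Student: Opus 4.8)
The plan is to prove the two claims in turn: first that $Disgp[X]$ with the stated operations is the free commutative disemigroup on $X$, and then to obtain $Di[X]$ by linearization. For the disemigroup statement I would begin by checking that $(\lfloor X^+\rfloor_{_1}\cup\lfloor XX\rfloor_{_2},\vdash,\dashv)$ really is a commutative disemigroup. Commutativity of $\vdash$ and $\dashv$ is immediate, since the defining formulas are symmetric. For associativity of each operation and for the three identities \eqref{eq00}, the decisive observation is that every product whose arguments involve a total of at least three letters collapses to $\lfloor w\rfloor_1$, where $w$ is the underlying commutative word, irrespective of the operations and the bracketing used. Hence each instance of associativity and of \eqref{eq00}, all of which involve three basis factors and therefore a word of length $\ge 3$, reduces to the already known associativity and commutativity of the free commutative semigroup $\lfloor X^+\rfloor$; the finitely many cases in which all arguments are single letters I would verify directly, for instance $(x\vdash y)\vdash z=\lfloor xy\rfloor_2\vdash z=\lfloor xyz\rfloor_1=x\vdash\lfloor yz\rfloor_2=x\vdash(y\vdash z)$.

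Next I would establish the universal property. First note that $X$ generates $Disgp[X]$: by the collapse just mentioned, $\lfloor x_{i_1}\cdots x_{i_n}\rfloor_1=x_{i_1}\dashv\cdots\dashv x_{i_n}$ for $n\ge 2$ (and $\lfloor x_{i_1}\rfloor_1=x_{i_1}$), while $\lfloor xx'\rfloor_2=x\vdash x'$. Given any commutative disemigroup $D$ and any map $\theta\colon X\to D$, any homomorphism $\phi\colon Disgp[X]\to D$ extending $\theta$ is therefore determined on all of $Disgp[X]$, which yields uniqueness and forces the definitions $\phi(\lfloor x_{i_1}\cdots x_{i_n}\rfloor_1)=\theta(x_{i_1})\dashv\cdots\dashv\theta(x_{i_n})$ and $\phi(\lfloor xx'\rfloor_2)=\theta(x)\vdash\theta(x')$. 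These formulas are unambiguous precisely because $\dashv$ and $\vdash$ are associative and commutative in $D$, so the iterated products do not depend on the order of the factors or on the bracketing.

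The remaining and, I expect, the only substantial point is to verify that this $\phi$ is a homomorphism, i.e.\ that $\phi(a*b)=\phi(a)*\phi(b)$ for $*\in\{\vdash,\dashv\}$ and all basis elements $a,b$. This rests on the following collapse lemma, valid in an arbitrary commutative disemigroup $D$: every threefold product of $d,e,f\in D$ equals $d\dashv e\dashv f$. I would prove it from \eqref{eq00} together with commutativity. For the ``middle'' product, commutativity of $\dashv$ and the first identity of \eqref{eq00} give $(d\vdash e)\dashv f=f\dashv(d\vdash e)=f\dashv(d\dashv e)=d\dashv e\dashv f$; for the ``right'' product, the second and third identities with commutativity give $(d\vdash e)\vdash f=(d\dashv e)\vdash f=f\vdash(d\dashv e)=(f\vdash d)\dashv e$, which is a ``middle'' product and hence also equals $d\dashv e\dashv f$. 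A routine induction on the number of factors, applying these identities to the outermost triple, then shows that any product of $m\ge 3$ elements of $D$ equals the symmetric $\dashv$-product of its factors. Granting this, the homomorphism property follows by cases: if $a=\lfloor u\rfloor_{p_1}$ and $b=\lfloor v\rfloor_{p_2}$ with $|uv|>2$, then $a*b=\lfloor uv\rfloor_1$ is sent by $\phi$ to the $\dashv$-product of all letters of $u$ and $v$, while $\phi(a)*\phi(b)$ is a product of $|uv|\ge 3$ elements of $D$ and, by the collapse lemma, equals the same element; the only case left, $a,b\in X$, is exactly the two defining formulas. This proves that $Disgp[X]$ is the free commutative disemigroup on $X$.

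Finally, the dialgebra statement follows by the standard linearization argument. The underlying set of any commutative dialgebra $A$, equipped with its two bilinear operations, is a commutative disemigroup; hence each map $\theta\colon X\to A$ extends uniquely to a disemigroup homomorphism $Disgp[X]\to A$, which in turn extends uniquely to a $\mathbf{k}$-linear map $Di[X]\to A$ because $Disgp[X]$ is a $\mathbf{k}$-basis of $Di[X]$. Bilinearity of $\vdash$ and $\dashv$ on both sides makes this linear map a homomorphism of dialgebras, and uniqueness is inherited from the disemigroup level, establishing the required universal property for $Di[X]$. The main obstacle throughout is the collapse lemma of the third step; once it is in hand, every other verification is a direct, if slightly tedious, computation.
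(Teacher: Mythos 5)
The paper itself offers no proof of this lemma: it is imported wholesale from the cited sources (Zhuchok 2010 and Zhang--Chen 2017), so there is nothing internal to compare your argument against, and it must stand on its own. It does. The route you take---direct verification that $Disgp[X]$ is a commutative disemigroup, then the universal property, then linearization---is the natural one, and you correctly isolate the one nontrivial ingredient: the collapse lemma that in \emph{any} commutative disemigroup every product of three (hence of $m\geq 3$) factors equals the $\dashv$-product of its factors, independently of the operations and the bracketing. Your derivation of the three-factor case is sound: the ``middle'' bracketing $(d\vdash e)\dashv f$ follows from commutativity of $\dashv$ and the first identity of \eqref{eq00}, the ``right'' bracketing $(d\vdash e)\vdash f$ reduces via the second and third identities to a middle one, and the remaining shapes are immediate from associativity and the first identity. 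Granting this, both sides of every three-variable identity in $Disgp[X]$ collapse to $\lfloor uvw\rfloor_1$, the formulas for the extension $\phi$ are forced and unambiguous, and the homomorphism property plus the passage to $Di[X]$ are routine, exactly as you say. The one place where your write-up should be tightened is the induction extending the collapse lemma from $3$ to $m$ factors: ``apply the identities to the outermost triple'' is not quite enough as stated, because a two-factor subexpression $Q_1\vdash Q_2$ does \emph{not} by itself equal $Q_1\dashv Q_2$; it only collapses once it sits next to a $\dashv$ inside a larger product. The fix is to prove the slightly stronger inductive statement (for instance: for every product expression $P$ and every element $z$, $P\dashv z$ equals the $\dashv$-product of the factors of $P$ with $z$), from which the claim for $m\geq 3$ factors follows. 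This is a presentational refinement rather than a gap---the induction genuinely goes through---so your proposal is correct as a whole.
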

For example, if $\lfloor u \rfloor,\lfloor v \rfloor\in \lfloor X^+\rfloor, \
\lfloor u \rfloor=\lfloor x_{i_1}x_{i_2}\cdots  x_{i_n} \rfloor,\lfloor v \rfloor=\lfloor x_{j_1}x_{j_2}\rfloor$, $x_{i_l},x_{j_k}\in X$, then with the notation as in \cite{Di,CZ},
$$
\lfloor u \rfloor_1:=\dot{x}_{i_1}x_{i_2}\cdots  x_{i_n}
=x_{i_1}\dashv x_{i_2}\dashv\cdots \dashv x_{i_n}, \ \ \lfloor v \rfloor_2:=x_{j_1}\dot{x}_{j_2}=x_{j_1}\vdash x_{j_2}.
$$

Let $X$ be a well-ordered set. We define the \textit{deg-lex ordering} on $\lfloor X^+\rfloor$ by the following:
for any $\lfloor u\rfloor=\lfloor x_{i_1}x_{i_2}\cdots x_{i_n}\rfloor,
\lfloor v\rfloor=\lfloor x_{j_1}x_{j_2}\cdots x_{j_m}\rfloor\in \lfloor X^+\rfloor$,
where $x_{i_l},x_{j_t}\in X$,
\begin{equation*}\label{equ0}
\lfloor u\rfloor>\lfloor v\rfloor \ \Leftrightarrow \ (|u|,x_{i_1},x_{i_2},\cdots, x_{i_n})>(|v|,x_{j_1},x_{j_2},\cdots, x_{j_m}) \ \mbox{lexicographically}.
\end{equation*}
An ordering $>$ on $\lfloor X^+\rfloor$ is said to be \textit{monomial} if $>$ is a well ordering and
for any $\lfloor u\rfloor,\lfloor v\rfloor,\lfloor w\rfloor\in \lfloor X^+\rfloor$,
$$
\lfloor u\rfloor>\lfloor v\rfloor \Rightarrow \lfloor uw\rfloor>\lfloor vw\rfloor.
$$
Clearly, the deg-lex ordering is monomial.

\section{Composition-Diamond lemma for commutative dialgebras }

Let $>$ be a monomial ordering on $\lfloor X^+\rfloor$.
We define the \textit{monomial-center ordering} $>_d$ on $\lfloor X^+ \rfloor_{_1}\cup \lfloor XX \rfloor_{_{2}}$ as follows.
For any $\lfloor u\rfloor_m,\lfloor v\rfloor_n\in \lfloor X^+ \rfloor_{_1}\cup \lfloor XX \rfloor_{_{2}}$,
\begin{equation}\label{equ0}
\lfloor u\rfloor_m>_d\lfloor v\rfloor_n \ \mbox{if} \  (\lfloor u\rfloor,m)>(\lfloor v\rfloor,n) \ \ \mbox{lexicographically}.
\end{equation}
In particular, if $>$ is the deg-lex ordering on $\lfloor X^+\rfloor$,
we call the ordering defined by $(\ref{equ0})$ the \textit{deg-lex-center ordering} on $\lfloor X^+ \rfloor_{_1}\cup \lfloor XX \rfloor_{_{2}}$.
For simplicity of notation, we write $>$ instead of $>_d$ when no confusion can arise.
It is clear that a monomial-center ordering is a well ordering on $\lfloor X^+ \rfloor_{_1}\cup \lfloor XX \rfloor_{_{2}}$. Such an ordering plays an important role in the sequel.

Here and subsequently, $>$ is a  monomial-center ordering on $\lfloor X^+ \rfloor_{_1}\cup \lfloor XX \rfloor_{_{2}}$ unless otherwise stated.

For any nonzero polynomial $f\in Di[ X]$,
$$
f=\alpha_1\lfloor u_1\rfloor_{m_1}+\cdots+\alpha_n\lfloor u_n\rfloor_{m_n},
$$
where each $0\neq \alpha_i\in \mathbf{k},
\ \lfloor u_i\rfloor_{m_i}\in \lfloor X^+ \rfloor_{_1}\cup \lfloor XX \rfloor_{_{2}}$ and
$\lfloor u_1\rfloor_{m_1}>\cdots>\lfloor u_n\rfloor_{m_n}$.
We write

\noindent $\bullet$ $supp(f):=\{\lfloor u_1\rfloor_{m_1},\cdots,\lfloor u_n\rfloor_{m_n}\}$;

\noindent $\bullet$ $\overline{f}:=\lfloor u_1\rfloor_{m_1}$, the \textit{leading monomial} of $f$;

\noindent $\bullet$ $lt(f):=\alpha_1\lfloor u_1\rfloor_{m_1}$, the \textit{leading term} of $f$;

\noindent $\bullet$
$lc(f):=\alpha_1$, the \textit{coefficient} of $\overline{f}$;

\noindent $\bullet$
$\widetilde{f}:=\lfloor u_1\rfloor$, the associative word of $\overline{f}$;

\noindent $\bullet$
$r\!_{_f}:=f-lt(f)$.

Note that $\widetilde{f}\in \lfloor X^+\rfloor$.
$f$ is called \textit{monic} if $lc(f)=1$.
For any nonempty subset $S$ of $Di[ X]$,
$S$ is\textit{ monic} if $s$ is monic for all $s\in S$.

For convenience we assume that $\lfloor u\rfloor>0$ for any $\lfloor u\rfloor\in \lfloor X^+\rfloor$ and $\widetilde{0}=0$, and $\lfloor u\rfloor_m>0$ for any $\lfloor u\rfloor_m\in \lfloor X^+ \rfloor_{_1}\cup \lfloor XX \rfloor_{_{2}}$.

\begin{definition}\label{def0}
A nonzero polynomial $f\in Di[ X]$ is \emph{strong} if $\widetilde{f}> \widetilde{r\!_{_f}}$.
\end{definition}

The proof of the following proposition follows from the Definition \ref{def0}.
\begin{proposition}\label{pnotstrong}
A  nonzero polynomial  $f\in Di[ X]$  is not strong if and only if
$$
f=\alpha_1\lfloor xx'\rfloor_2+\alpha_2\lfloor xx'\rfloor_1+g,
$$
where $0\neq\alpha_1,\alpha_2\in \mathbf{k}, \ x,x'\in X, \ \ g\in Di[X]$ and  $\overline{g}<\lfloor xx'\rfloor_1$.
\end{proposition}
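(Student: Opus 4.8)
The plan is to unwind the definition of \emph{strong} in terms of the associative words of the two leading monomials of $f$, and then read off the characterization directly from the lexicographic nature of the monomial-center ordering $>_d$.

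First I would write $f=\alpha_1\lfloor u_1\rfloor_{m_1}+\cdots+\alpha_n\lfloor u_n\rfloor_{m_n}$ with $\lfloor u_1\rfloor_{m_1}>\cdots>\lfloor u_n\rfloor_{m_n}$ and each $\alpha_i\neq 0$, so that $\widetilde{f}=\lfloor u_1\rfloor$. If $n=1$ then $r\!_{_f}=0$, whence $\widetilde{r\!_{_f}}=0<\widetilde{f}$ and $f$ is strong; so a non-strong $f$ must have $n\geq 2$. In that case $\overline{r\!_{_f}}=\lfloor u_2\rfloor_{m_2}$, hence $\widetilde{r\!_{_f}}=\lfloor u_2\rfloor$. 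Therefore $f$ is strong exactly when $\lfloor u_1\rfloor>\lfloor u_2\rfloor$ in the monomial order on $\lfloor X^+\rfloor$, and $f$ fails to be strong exactly when $\lfloor u_1\rfloor\leq\lfloor u_2\rfloor$.

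The key step is to invoke the definition of $>_d$. Since $\lfloor u_1\rfloor_{m_1}>_d\lfloor u_2\rfloor_{m_2}$ means $(\lfloor u_1\rfloor,m_1)>(\lfloor u_2\rfloor,m_2)$ lexicographically, the possibility $\lfloor u_1\rfloor\leq\lfloor u_2\rfloor$ can occur only when $\lfloor u_1\rfloor=\lfloor u_2\rfloor$ and $m_1>m_2$. As $m_1,m_2\in\{1,2\}$, this forces $m_1=2$ and $m_2=1$. Because a monomial with center $2$ lies in $\lfloor XX\rfloor_{_2}$, its associative word has length $2$, so $\lfloor u_1\rfloor=\lfloor u_2\rfloor=\lfloor xx'\rfloor$ for some $x,x'\in X$ with $x\leq x'$. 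Putting $g=\alpha_3\lfloor u_3\rfloor_{m_3}+\cdots+\alpha_n\lfloor u_n\rfloor_{m_n}$ (possibly $0$), I would obtain $f=\alpha_1\lfloor xx'\rfloor_2+\alpha_2\lfloor xx'\rfloor_1+g$ with $\overline{g}=\lfloor u_3\rfloor_{m_3}<\lfloor xx'\rfloor_1$, which is precisely the asserted form.

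For the converse I would simply check that any $f$ of the displayed form is not strong: since $\lfloor xx'\rfloor_2>_d\lfloor xx'\rfloor_1>_d\overline{g}$, the two leading monomials of $f$ are $\lfloor xx'\rfloor_2$ and $\lfloor xx'\rfloor_1$, so $\widetilde{f}=\lfloor xx'\rfloor=\widetilde{r\!_{_f}}$ and the inequality $\widetilde{f}>\widetilde{r\!_{_f}}$ fails. The whole argument is a direct unwinding of the definitions, so I do not expect a genuine obstacle; the only points requiring care are the degenerate case $n=1$ (handled by the convention $\widetilde{0}=0$) and the observation that the center value $2$ restricts the associated associative word to length $2$, which is exactly what produces the $\lfloor xx'\rfloor$ shape.
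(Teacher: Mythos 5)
Your proof is correct and follows the same route the paper intends: the paper gives no explicit argument, stating only that the proposition "follows from the Definition," and your write-up is precisely the direct unwinding of the definition of \emph{strong} together with the lexicographic structure of the monomial-center ordering (forcing $\lfloor u_1\rfloor=\lfloor u_2\rfloor$, $m_1=2$, $m_2=1$, hence $|u_1|=2$). The degenerate cases ($n=1$ via $\widetilde{0}=0$, and $g=0$ in the converse) are handled correctly.
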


It is easy to check that
$>$ on $\lfloor X^+ \rfloor_{_1}\cup \lfloor XX \rfloor_{_{2}}$ is compatible with operations $\vdash$ and $ \dashv$ in the following sense:
for any $\lfloor u\rfloor_m, \lfloor v\rfloor_n, \lfloor w\rfloor_k\in \lfloor X^+ \rfloor_{_1}\cup \lfloor XX \rfloor_{_{2}}$,
\begin{eqnarray*}{ }
\ \lfloor u\rfloor_m >\lfloor v\rfloor_n, \lfloor u\rfloor >\lfloor v\rfloor &\Rightarrow &
\lfloor u\rfloor_m\dashv \lfloor w\rfloor_k>\lfloor v\rfloor_n\dashv \lfloor w\rfloor_k,\\
&& \lfloor u\rfloor_m\vdash \lfloor w\rfloor_k>\lfloor v\rfloor_n\vdash \lfloor w\rfloor_k;\\
\ \lfloor u\rfloor_m >\lfloor v\rfloor_n,  \lfloor u\rfloor=\lfloor v \rfloor
&\Rightarrow&\lfloor u\rfloor=\lfloor v \rfloor=\lfloor xx' \rfloor,\ x,x'\in X,\ m=2,\ n=1,\ \mbox{ and }\\
&&\lfloor u\rfloor_m\dashv \lfloor w\rfloor_k=\lfloor v\rfloor_n\dashv \lfloor w\rfloor_k
=\lfloor u\rfloor_m\vdash \lfloor w\rfloor_k=\lfloor v\rfloor_n\vdash \lfloor w\rfloor_k=\lfloor uw\rfloor_1.
 \end{eqnarray*}

From this, it follows that
\begin{lemma}\label{r1}
Let $0\neq f\in Di[ X]$ and $\lfloor u\rfloor_m\in \lfloor X^+ \rfloor_{_1}\cup \lfloor XX \rfloor_{_{2}}$. Then
 \begin{eqnarray*}
\overline{(f\vdash \lfloor u\rfloor_m)}\leq\ \overline{f}\vdash \lfloor u\rfloor_m, \ \  \
\overline{(f\dashv \lfloor u\rfloor_m)}\leq\ \overline{f}\dashv \lfloor u\rfloor_m.
 \end{eqnarray*}
In particular, if $f$ is strong, then $\overline{(f\vdash \lfloor u\rfloor_m)}=\overline{f}\vdash \lfloor u\rfloor_m,$ and
$\overline{(f\dashv \lfloor u\rfloor_m)}= \overline{f}\dashv \lfloor u\rfloor_m$.
\end{lemma}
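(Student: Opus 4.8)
The plan is to reduce everything to the bilinearity of $\vdash$ and $\dashv$ together with the two compatibility rules displayed immediately before the lemma. Write $f=\alpha_1\lfloor u_1\rfloor_{m_1}+\cdots+\alpha_n\lfloor u_n\rfloor_{m_n}$ with $\lfloor u_1\rfloor_{m_1}>\cdots>\lfloor u_n\rfloor_{m_n}$. By bilinearity $f\vdash\lfloor u\rfloor_m=\sum_{i=1}^{n}\alpha_i(\lfloor u_i\rfloor_{m_i}\vdash\lfloor u\rfloor_m)$, and each summand $\lfloor u_i\rfloor_{m_i}\vdash\lfloor u\rfloor_m$ is a single basis monomial by the multiplication table. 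Hence $supp(f\vdash\lfloor u\rfloor_m)\subseteq\{\lfloor u_i\rfloor_{m_i}\vdash\lfloor u\rfloor_m\mid 1\le i\le n\}$, and it suffices to show that each of these products is $\le\overline{f}\vdash\lfloor u\rfloor_m=\lfloor u_1\rfloor_{m_1}\vdash\lfloor u\rfloor_m$.

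For the first (general) inequality, I would fix $i\ge 2$, so that $\lfloor u_1\rfloor_{m_1}>\lfloor u_i\rfloor_{m_i}$. Since the center ordering compares $(\lfloor u\rfloor,m)$ lexicographically, either $\lfloor u_1\rfloor>\lfloor u_i\rfloor$ or $\lfloor u_1\rfloor=\lfloor u_i\rfloor$. In the former case the first compatibility rule gives $\lfloor u_1\rfloor_{m_1}\vdash\lfloor u\rfloor_m>\lfloor u_i\rfloor_{m_i}\vdash\lfloor u\rfloor_m$; in the latter the second compatibility rule forces $\lfloor u_1\rfloor=\lfloor u_i\rfloor=\lfloor xx'\rfloor$ with $m_1=2,\ m_i=1$, and yields $\lfloor u_1\rfloor_{m_1}\vdash\lfloor u\rfloor_m=\lfloor u_i\rfloor_{m_i}\vdash\lfloor u\rfloor_m$. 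Either way $\lfloor u_i\rfloor_{m_i}\vdash\lfloor u\rfloor_m\le\overline{f}\vdash\lfloor u\rfloor_m$, so the largest monomial that can occur in $f\vdash\lfloor u\rfloor_m$ is $\overline{f}\vdash\lfloor u\rfloor_m$. Collecting coefficients can only cancel monomials, never produce larger ones, so $\overline{(f\vdash\lfloor u\rfloor_m)}\le\overline{f}\vdash\lfloor u\rfloor_m$ (using the convention $\overline{0}=0$ should the product vanish). The argument for $\dashv$ is identical, word for word.

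For the ``in particular'' clause I would invoke Definition \ref{def0} and Proposition \ref{pnotstrong}. If $n=1$ the claim is trivial, since $f\vdash\lfloor u\rfloor_m$ is a single nonzero monomial. If $n\ge 2$, strongness means $\widetilde{f}=\lfloor u_1\rfloor>\lfloor u_2\rfloor=\widetilde{r\!_{_f}}$, and because the center ordering refines the associative-word ordering we also have $\lfloor u_2\rfloor\ge\lfloor u_3\rfloor\ge\cdots\ge\lfloor u_n\rfloor$; hence $\lfloor u_1\rfloor>\lfloor u_i\rfloor$ strictly for every $i\ge 2$. This excludes the degenerate case $\lfloor u_1\rfloor=\lfloor u_i\rfloor$ above, so only the first compatibility rule applies and $\lfloor u_1\rfloor_{m_1}\vdash\lfloor u\rfloor_m>\lfloor u_i\rfloor_{m_i}\vdash\lfloor u\rfloor_m$ for all $i\ge 2$. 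Thus $\overline{f}\vdash\lfloor u\rfloor_m$ is strictly larger than every other summand, cannot be cancelled, and survives with coefficient $\alpha_1\ne 0$, giving $\overline{(f\vdash\lfloor u\rfloor_m)}=\overline{f}\vdash\lfloor u\rfloor_m$; likewise for $\dashv$.

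The only delicate point, and the place I expect to have to argue carefully, is recognizing that the general inequality is forced to be an equality precisely when the degenerate configuration $\lfloor u_1\rfloor=\lfloor u_i\rfloor$ (a leading pair of the form $\lfloor xx'\rfloor_2,\lfloor xx'\rfloor_1$) cannot occur, and that this is exactly what strongness encodes via Proposition \ref{pnotstrong}. I do not anticipate a genuine obstacle beyond this case bookkeeping: once one tracks which products can coincide under the multiplication table, both statements follow directly from the two compatibility rules.
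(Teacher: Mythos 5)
Your proof is correct and follows exactly the route the paper intends: the paper states Lemma \ref{r1} with no written proof, deriving it directly ("From this, it follows that") from the two compatibility rules displayed just before it, and your argument is precisely that derivation carried out monomial-by-monomial via bilinearity, with the strong case handled by noting that $\widetilde{f}>\widetilde{r\!_{_f}}$ rules out the degenerate pair $\lfloor xx'\rfloor_2,\lfloor xx'\rfloor_1$ in the lead. Your explicit treatment of possible cancellation (with the convention $\overline{0}=0$) is a detail the paper leaves implicit, but it is the same approach.
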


\begin{example}
Let $X=\{x_1,x_2,x_3\}$, $x_3>x_2>x_1$, $Char\mathbf{k}\neq2,3$ and $>$ be the deg-lex-center ordering on
$\lfloor X^+ \rfloor_{_1}\cup \lfloor XX \rfloor_{_{2}}$.
Let $f=2\lfloor x_2x_3\rfloor_2-2\lfloor x_2 x_3\rfloor_1+3\lfloor x_1 x_3\rfloor_2$. Then
$$
\overline{f}=\lfloor x_2x_3\rfloor_2,\ lt(f)=2\lfloor x_2x_3\rfloor_2,\ lc(f)=2,\ \widetilde{f}=\lfloor x_2x_3\rfloor, \
r\!_{_f}=-2\lfloor x_2 x_3\rfloor_1+3\lfloor x_1 x_3\rfloor_2.
$$
The polynomial $f$ is not strong since $\widetilde{f}=\lfloor x_2x_3\rfloor=\widetilde{r\!_{_f}}$ and
 \begin{eqnarray*}
\overline{(f \dashv x_1)}=\overline{( f\vdash x_1)}&=&\lfloor x_1 x_1 x_3\rfloor_1
<\lfloor x_1x_2 x_3\rfloor_1=\overline{f} \vdash x_1=\overline{f}\dashv x_1, \\
\overline{(r\!_{_f} \dashv x_1)}=\overline{(r\!_{_f} \vdash x_1)}&=&\lfloor x_1x_2 x_3\rfloor_1=\overline{r\!_{_f}}\vdash x_1=\overline{r\!_{_f}}\dashv x_1.
\end{eqnarray*}
\end{example}

Here and subsequently, $S$ denotes a monic subset of $Di[ X]$ unless otherwise stated.

\begin{definition}
Let $S$ be a monic subset of $Di[X]$. A polynomial $g\in Di[X]$ is called a \emph{normal $S$-polynomial} in $Di[X]$ if either $g\in S$ or $g$ is one of the following:
\begin{eqnarray*}
g=s\vdash x,\ \  \mbox{where}\ s\in S,\ |\widetilde{s}|=1,\ x\in X,
\end{eqnarray*}
or
\begin{eqnarray*}
g=s\dashv \lfloor a\rfloor_1,\ \  \  \mbox{ where}\ s\in S,\ s \mbox{ is strong},\  \lfloor a\rfloor \in \lfloor X^+\rfloor.
\end{eqnarray*}
If this is so, we also call $g$ a \emph{normal $s$-polynomial}.
\end{definition}

\begin{lemma}\label{lorder}
Let $s\in S$ and $g$ be a normal $s$-polynomial. Then

\begin{equation}\label{eq000}
\overline{g}=
\begin{cases}
\overline{s}
& \text{if $g=s$,}\\
\lfloor\widetilde{s}x\rfloor_2  & \text{if $g=s\vdash x$,}\\
\lfloor\widetilde{s}a\rfloor_1  & \text{if $g=s\dashv \lfloor a\rfloor_1$.}
\end{cases}
\end{equation}
In particular, $\overline{g}\geq \overline{s}$, and $\overline{g}=\overline{s}$ if and only if $g=s$.
\end{lemma}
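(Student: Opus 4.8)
The plan is to treat the three forms of a normal $s$-polynomial separately, reducing the $\vdash$ and $\dashv$ cases to Lemma~\ref{r1}, and then to read off the leading monomials directly from the multiplication table of $Di[X]$. The ``in particular'' clause will follow once I record the elementary fact that appending a nonempty word strictly increases a monomial under a monomial-center ordering.

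When $g=s$ there is nothing to prove. For $g=s\vdash x$ I would first observe that the hypothesis $|\widetilde{s}|=1$ already forces $s$ to be \emph{strong}: since $\overline{s}\in \lfloor X^+\rfloor_{_1}\cup\lfloor XX\rfloor_{_2}$ has associative word of length one, necessarily $\overline{s}=\widetilde{s}=y$ for some $y\in X$; moreover, if $r\!_{_s}\neq 0$, then every monomial $\lfloor w\rfloor_k$ of $r\!_{_s}$ satisfies $\lfloor w\rfloor_k<\lfloor y\rfloor_1$, and since $k\geq 1$ the lexicographic comparison defining $>_d$ forces $\lfloor w\rfloor<\lfloor y\rfloor$, whence $\widetilde{r\!_{_s}}<\widetilde{s}$ (and if $r\!_{_s}=0$ then $s$ is a monomial, hence strong). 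With $s$ strong, Lemma~\ref{r1} gives $\overline{g}=\overline{s}\vdash x=y\vdash x=\lfloor yx\rfloor_2=\lfloor\widetilde{s}x\rfloor_2$. For $g=s\dashv\lfloor a\rfloor_1$ the polynomial $s$ is strong by the very definition of a normal $s$-polynomial, so Lemma~\ref{r1} again yields $\overline{g}=\overline{s}\dashv\lfloor a\rfloor_1$; consulting the multiplication table (the $|ua|>2$ rule when $|\widetilde{s}|+|a|>2$, and the rule $x\dashv x'=\lfloor xx'\rfloor_1$ in the remaining length-two case $|\widetilde{s}|=|a|=1$) shows $\overline{s}\dashv\lfloor a\rfloor_1=\lfloor\widetilde{s}a\rfloor_1$. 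This establishes all three lines of (\ref{eq000}).

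For the final assertion the only remaining task is to compare $\overline{g}$ with $\overline{s}$ in the last two cases. Here I would use that $>$ is a monomial ordering on $\lfloor X^+\rfloor$: compatibility together with the well-ordering property gives, by the usual no-infinite-descent argument (the map $\lfloor v\rfloor\mapsto\lfloor vw\rfloor$ is strictly increasing, so $\lfloor v_0w\rfloor<\lfloor v_0\rfloor$ would produce an infinite descending chain), that $\lfloor vw\rfloor\geq\lfloor v\rfloor$ for every $w$, with equality only when $w$ is empty. Applying this with $v=\widetilde{s}$ yields $\lfloor\widetilde{s}x\rfloor>\lfloor\widetilde{s}\rfloor$ and $\lfloor\widetilde{s}a\rfloor>\lfloor\widetilde{s}\rfloor$, and since the first coordinate decides the lexicographic comparison of $>_d$, we obtain $\overline{g}>\overline{s}$ in both cases $g=s\vdash x$ and $g=s\dashv\lfloor a\rfloor_1$. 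Together with the trivial case $g=s$, this proves $\overline{g}\geq\overline{s}$ and that equality holds precisely when $g=s$.

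I expect the only genuinely delicate points to be (i) noticing that $|\widetilde{s}|=1$ makes $s$ automatically strong, so that Lemma~\ref{r1} applies in the $\vdash$ case without any extra hypothesis, and (ii) the monotonicity fact $\lfloor vw\rfloor>\lfloor v\rfloor$, which must be extracted from the abstract definition of a monomial (well-)ordering rather than taken for granted. Everything else is a direct substitution into the multiplication table.
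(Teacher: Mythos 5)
Your proof is correct and takes essentially the same approach as the paper: the case analysis (\ref{eq000}) is exactly the part the paper dismisses as ``straightforward'' (your verification via Lemma~\ref{r1}, including the useful observation that $|\widetilde{s}|=1$ already forces $s$ to be strong, is a valid way to fill it in), and your no-infinite-descent argument showing $\lfloor\widetilde{s}c\rfloor>\widetilde{s}$ for nonempty $\lfloor c\rfloor$ is precisely the paper's own argument for the ``in particular'' clause.
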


\begin{proof}
The proof of (\ref{eq000}) is straightforward.
It remains to prove that $\overline{g}> \overline{s}$ if $\overline{g}\neq \overline{s}$. Suppose that $\overline{g}\neq\overline{s}$. Then $\widetilde{g}=\lfloor\widetilde{s}c\rfloor$ where $\lfloor c\rfloor\in \lfloor X^+\rfloor$.
We claim that $\widetilde{g}>\widetilde{s}$. Otherwise, $\widetilde{s}>\widetilde{g}=\lfloor\widetilde{s}c\rfloor$. We have an infinite descending chain
 $$
 \widetilde{s} >\lfloor\widetilde{s}c\rfloor >\lfloor\widetilde{s}c^2\rfloor >\lfloor\widetilde{s}c^3\rfloor >\cdots,
$$
which contradicts the fact that $>$ is a monomial ordering on $\lfloor X^+\rfloor$. This clearly forces $\overline{g}>\overline{s}$.
\end{proof}

Note that if $g$ is a normal $s$-polynomial, then $\overline{g}=\lfloor\widetilde{s}b\rfloor_m$ for some $\lfloor b\rfloor\in \lfloor X^*\rfloor$ and $m\in \{1,2\}$.

From now on, we use $\lfloor sb\rfloor_m$ to present a normal $S$\!-polynomial, where $s\in S,\ \lfloor b\rfloor\in \lfloor X^*\rfloor$ and $m\in \{1,2\}$, i.e.
\begin{equation}\label{nsw}
\lfloor sb\rfloor_m=
\begin{cases}
s
& \text{if $b=\varepsilon$,}\\
s\vdash b  & \text{if $|\widetilde{s}|=1$ and $b\in X$,}\\
s\dashv \lfloor b\rfloor_1  & \text{if $s$ is strong and $\lfloor b\rfloor \in \lfloor X^+\rfloor$.}
\end{cases}
\end{equation}
It follows immediately that $\overline{\lfloor sb\rfloor_m}=\lfloor \widetilde{s}b\rfloor_m$.

For $\lfloor a\rfloor,\lfloor b\rfloor\in \lfloor X^+\rfloor$, we denote the least common multiple of $\lfloor a\rfloor$ and $\lfloor b\rfloor$
by $lcm\{\lfloor a\rfloor,\lfloor b\rfloor\}$.

\begin{definition}\label{compo}
Let $f,g$ be two monic polynomials in $Di[X]$.
\begin{enumerate}
\item[(i)] If $f$ is not strong, then for any $x\in X$,
we call both $f\vdash x$ and $f\dashv x$ the \emph{multiplication compositions} of $f$.

\item[(ii)] If $f$ is strong, $|\widetilde{f}|>1$ and $supp(f)\cap X\neq \emptyset$, then for any $x\in X$,
we call $(f\vdash x)-(f\dashv x)$ the \emph{special composition} of $f$.

\item[(iii)] If $f\neq g$ and $\overline{f}=\overline{g}$, then
we call $(f,g)_{\overline{f}}=f-g$ the \emph{equal composition} of $f$ and $g$.

\item[(iv)] Suppose that $|\widetilde{f}|=2$ and $|\widetilde{g}|=1$.
If there exists a normal $g$-polynomial $\lfloor gx\rfloor_m$ for some $x\in X$ such that $\overline{f}=\overline{\lfloor gx\rfloor_m}$, then
we call $(f,g)_{\overline{f}}=f-\lfloor gx\rfloor_m$ the \emph{short intersection composition} of $f$ and $g$.

\item[(v)] Suppose that $f,g$ are strong, $|\widetilde{f}|+|\widetilde{g}|>3$ and $\overline{f}\neq \overline{g}$.
If $\widetilde{f}=\widetilde{g}$, then for any $x\in X$,
we call $(f,g)_{\lfloor \widetilde{f}x\rfloor_1}=\lfloor fx\rfloor_1-\lfloor gx\rfloor_1$ the \emph{equal multiplication composition} of $f$ and $g$;
if $\widetilde{f}\neq\widetilde{g}$ and
$lcm\{\widetilde{f},\widetilde{g}\}=\lfloor w\rfloor=\lfloor \widetilde{f}a\rfloor=\lfloor \widetilde{g}b\rfloor$ for some
$\lfloor a\rfloor \in \lfloor X^*\rfloor,\lfloor b\rfloor\in \lfloor X^+\rfloor$
and $|w|<|\widetilde{f}|+|\widetilde{g}|$, then we call
$(f,g)_{\lfloor w\rfloor_1}=\lfloor fa\rfloor_1-\lfloor gb\rfloor_1$
the \emph{long intersection composition} of $f$ and $g$.
\end{enumerate}
\end{definition}

\begin{definition}\label{dcgsb}
Let $S$ be  a monic subset of $Di[X]$.
A polynomial $h\in Di[X]$ is called \emph{trivial modulo} $S$,
if $h=\sum_i \alpha_{i}\lfloor s_ib_i\rfloor_{m_i}$, where each $\alpha_{i}\in \mathbf{k},
s_i\in S, \lfloor b_i\rfloor\in \lfloor X^*\rfloor$,
and $\overline{\lfloor s_ib_i\rfloor_{m_i}}\leq \overline{h}$ \emph{if} $\alpha_{i}\neq0$.

A monic set $S$ is called a \emph{Gr\"{o}bner--Shirshov basis} in $Di[X]$ if any
composition of polynomials in $S$ is trivial modulo $S$.

$S$  is said to be \emph{closed}  under the multiplication and special compositions
if any multiplication and special composition of polynomials in $S$ is trivial modulo $S$.
\end{definition}

For convenience, for any $f,g\in Di[X]$ and $\lfloor w\rfloor_m\in \lfloor X^+ \rfloor_{_1}\cup \lfloor XX \rfloor_{_{2}}$, we write
$$
f\equiv g  \mod(S,\lfloor w\rfloor_m)
$$
which means that$f-g=\sum_i \alpha_{i}\lfloor s_ib_i\rfloor_{m_i}$, where each $\alpha_{i}\in \mathbf{k},
s_i\in S, \lfloor b_i\rfloor\in \lfloor X^*\rfloor$,
and $\overline{\lfloor s_ib_i\rfloor_{m_i}}< \lfloor w\rfloor_m$ if $\alpha_{i}\neq0$.

We set
$$
Irr(S):=\{\lfloor u\rfloor_n\in  \lfloor X^+ \rfloor_{_1}\cup \lfloor XX \rfloor_{_{2}}\mid \lfloor u\rfloor_n\neq \overline{\lfloor sb\rfloor_m}\ \mbox{ for any normal } S\!\mbox{-polynomial}\ \lfloor sb\rfloor_m \}
$$
and use $Id(S)$ to denote the ideal of $Di[X]$ generated by $S$.

\begin{lemma}\label{l51}
Let $S$ be closed under the multiplication and special compositions, $\lfloor sb\rfloor_n$ a normal $S$\!-polynomial,
$\lfloor a\rfloor_m\in \lfloor X^+ \rfloor_{_1}\cup \lfloor XX \rfloor_{_{2}}$ and $f\in Id(S)$.
Then
\begin{enumerate}

\item[(i)] $\lfloor sb\rfloor_n \dashv \lfloor a\rfloor_m$ is trivial modulo $S$;

\item[(ii)] $\lfloor sb\rfloor_n\vdash \lfloor a\rfloor_m$ is trivial modulo $S$;

\item[(iii)]
$
f=\sum_i \alpha_{i}\lfloor s_ib_i\rfloor_{m_i},
$
where each $\alpha_{i}\in \mathbf{k}, s_i\in S, \lfloor b_i\rfloor\in \lfloor X^*\rfloor$.
\end{enumerate}
\end{lemma}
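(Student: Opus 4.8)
The plan is to prove the three claims together, exploiting the fact that (i) and (ii) are exactly the statement that $S$ is closed under \emph{all} compositions, not merely the multiplication and special ones, and then to use this to derive (iii). I would first record the shape of $\lfloor sb\rfloor_n$ from display~(\ref{nsw}): a normal $S$-polynomial is either $s$ itself, or $s\vdash b$ with $|\widetilde{s}|=1$ and $b\in X$, or $s\dashv\lfloor b\rfloor_1$ with $s$ strong, and in every case $\overline{\lfloor sb\rfloor_n}=\lfloor\widetilde{s}b\rfloor_n$ by Lemma~\ref{lorder}. The key computational observation is that the two products $\lfloor sb\rfloor_n\dashv\lfloor a\rfloor_m$ and $\lfloor sb\rfloor_n\vdash\lfloor a\rfloor_m$ can each be rewritten, using the multiplication rules of $Disgp[X]$ in the Lemma of Section~2, as a scalar times a \emph{single} normal $S$-polynomial $\lfloor s\,c\rfloor_k$ for an appropriate $\lfloor c\rfloor\in\lfloor X^*\rfloor$ and $k\in\{1,2\}$, \emph{provided} the relevant product of leading monomials does not drop in length. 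Concretely, when $|\widetilde{s}b\,a|>2$ every product collapses to a $\dashv$-type word $\lfloor\cdots\rfloor_1$, so $\lfloor sb\rfloor_n\diamond\lfloor a\rfloor_m$ is itself (up to the obvious reordering of commutative factors) a normal $s$-polynomial and triviality is immediate from Lemma~\ref{r1}.

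For (i) and (ii), the real work is therefore confined to the boundary cases where the product involves short words and where $s$ fails to be strong. I would split into cases according to the form of $\lfloor sb\rfloor_n$. The only genuinely delicate situation is when $s$ is \emph{not} strong and $b=\varepsilon$, i.e.\ $\lfloor sb\rfloor_n=s$ with $\widetilde{s}=\widetilde{r_{s}}$; here Lemma~\ref{r1} gives only the inequality $\overline{(s\dashv\lfloor a\rfloor_m)}\le\overline{s}\dashv\lfloor a\rfloor_m$, and I cannot directly claim that $s\dashv\lfloor a\rfloor_m$ is a normal $s$-polynomial. In that case I would invoke closure under \emph{multiplication compositions}: by Proposition~\ref{pnotstrong} the non-strong $s$ has the form $\alpha_1\lfloor xx'\rfloor_2+\alpha_2\lfloor xx'\rfloor_1+g$, and the hypothesis tells me that both $s\vdash x''$ and $s\dashv x''$ are trivial modulo $S$ for every $x''\in X$; multiplying these on the right by the remaining factors of $\lfloor a\rfloor_m$ (and controlling leading monomials via Lemma~\ref{r1}) reduces $s\dashv\lfloor a\rfloor_m$ to a $\mathbf{k}$-combination of normal $S$-polynomials whose leading monomials are bounded by $\overline{s\dashv\lfloor a\rfloor_m}$. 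The symmetric argument, now using the \emph{special composition} $(s\vdash x)-(s\dashv x)$ when $|\widetilde{s}|>1$ and $supp(s)\cap X\neq\emptyset$, handles the $\vdash$ side.

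Finally, (iii) follows from (i) and (ii) by a descending-induction argument on the leading monomial. Since $f\in Id(S)$, I can write $f$ as a finite sum of terms of the form $p\,\diamond\,s\,\diamond\,q$ with $s\in S$ and $\diamond\in\{\vdash,\dashv\}$; using associativity and the identities (\ref{eq00}) to push all factors to the right, each such term becomes a $\mathbf{k}$-linear combination of expressions $\lfloor sb\rfloor_n\diamond\lfloor a\rfloor_m$, which by (i) and (ii) are trivial modulo $S$, hence expressible as $\sum\alpha_i\lfloor s_ib_i\rfloor_{m_i}$. This already gives the desired representation of $f$ as a combination of normal $S$-polynomials, with no constraint on leading monomials now required (note that (iii), unlike triviality, asks only for \emph{some} such expression, not one with $\overline{\lfloor s_ib_i\rfloor_{m_i}}\le\overline{f}$). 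The main obstacle I anticipate is the bookkeeping in the non-strong boundary case of (i): ensuring that after applying the multiplication-composition hypothesis the leading-monomial estimates still close up, so that the resulting sum is genuinely trivial and not merely an ideal membership statement. Everything else is a routine, if tedious, verification using the product table of $Disgp[X]$ and the monomial-ordering compatibility recorded before Lemma~\ref{r1}.
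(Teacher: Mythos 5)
Your proposal matches the paper's own proof in all essentials: part (i) is handled by peeling off one letter of $\lfloor a\rfloor$ at a time (the paper phrases this as induction on $|a|$), with the strong case yielding normal $S$\!-polynomials outright and the non-strong case (necessarily $b=\varepsilon$) resolved by closure under multiplication compositions with leading-monomial control from Lemma~\ref{r1}; part (ii) reduces to (i) except for the product $s\vdash x$ with $|\widetilde{s}|>1$, which the special composition settles; and part (iii) follows by writing $f$ as a combination of one-sided products $s$, $s\dashv\lfloor a\rfloor_m$, $s\vdash\lfloor a\rfloor_m$ using commutativity and the dialgebra identities, then invoking (i) and (ii). The only detail you leave implicit is the sub-case $supp(s)\cap X=\emptyset$ in (ii), where no special composition exists and one instead observes directly that $s\vdash x=s\dashv x$ — exactly as the paper does.
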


\begin{proof}
$(i)$ It suffices to show that $\lfloor sb\rfloor_n \dashv \lfloor a\rfloor_1$ is trivial modulo $S$.
The proof is by induction on $|a|$.
Suppose that $|a|=1$. The result holds trivially if $s$ is strong.
Assume that $s$ is not strong. Then $\lfloor b\rfloor$ is empty and we are done by the triviality of multiplication composition.
Now, let $|a|>1$ and write $\lfloor a\rfloor= y\lfloor a_1\rfloor$ in $X^*$, where $y\in X, \lfloor a_1\rfloor\in \lfloor X^+ \rfloor$.
Then, by the above arguments, $\lfloor sb\rfloor_n\dashv \lfloor a\rfloor_1=(\lfloor sb\rfloor_n\dashv y)\dashv \lfloor a_1\rfloor_1$ is a linear
combination of polynomials of the form $\lfloor s'c\rfloor_l\dashv \lfloor a_1\rfloor_1$,
where $s'\in S, \lfloor c\rfloor\in \lfloor X^*\rfloor$
and $\overline{\lfloor s'c\rfloor_l}\leq \overline{\lfloor sb\rfloor_n\dashv y}$.
By induction, $\lfloor s'c\rfloor_l\dashv \lfloor a_1\rfloor_1$ is a linear
combination of normal $S$\!-polynomials $\lfloor s_ib_i\rfloor_{m_i}$
and $\overline{\lfloor s_ib_i\rfloor_{m_i}}\leq\overline{\lfloor s'c\rfloor_l\dashv \lfloor a_1\rfloor_1}\leq\overline{\lfloor s'c\rfloor_l}\dashv \lfloor a_1\rfloor_1\leq
(\overline{\lfloor sb\rfloor_n\dashv y})\dashv \lfloor a_1\rfloor_1 =\overline{\lfloor sb\rfloor_n\dashv \lfloor a\rfloor_1}$,
which implies that $\lfloor sb\rfloor_n \dashv \lfloor a\rfloor_1$ is trivial modulo $S$.

$(ii)$ If $b\neq \varepsilon$ or $|a|>1$, then $\lfloor sb\rfloor_n\vdash \lfloor a\rfloor_m=\lfloor sb\rfloor_n \dashv \lfloor a\rfloor_1$
and we have done by $(i)$. It remains to prove that $s\vdash a$ is trivial modulo $S$, where $a\in X$.
Note that $s\vdash a$ is a normal $S$\!-polynomial if $|\widetilde{s}|=1$. Let $|\widetilde{s}|>1$.
If $s$ is not strong, then we are done by the triviality of multiplication composition.
Otherwise, $s\dashv a$ is a normal $S$\!-polynomial and $\overline{s\dashv a}=\overline{s\vdash a}$.
For $supp(s)\cap X=\emptyset$ it is easy to see that $s\vdash a=s\dashv a$ and we are done.
For $supp(s)\cap X\neq\emptyset$,
by the triviality of special composition, $s\vdash a-s\dashv a$
is a linear combination of normal $S$\!-polynomials $\lfloor s_ib_i\rfloor_{m_i}$
and $\overline{\lfloor s_ib_i\rfloor_{m_i}}\leq \overline{s\vdash a-s\dashv a}<\overline{s\vdash a}$.
Therefore the result holds.

$(iii)$ It is clear that $f$ is a linear combination of polynomials of the forms
$$
s, \ \ s\dashv \lfloor a\rfloor_m, \ \ s\vdash \lfloor a\rfloor_m,
$$
where $s\in S, \lfloor a\rfloor_m \in \lfloor X^+ \rfloor_{_1}\cup \lfloor XX \rfloor_{_{2}}$. Then the result follows from (i) and (ii).
\end{proof}

\begin{lemma}\label{l00}
Let $S$ be a monic subset of $Di[ X]$.
Then for any nonzero $f\in Di[ X]$,
$$
f=\sum_i\alpha_{i}\lfloor u_{i}\rfloor_{n_i}+\sum_j\beta_{j}\lfloor s_jb_j\rfloor_{m_j},
$$
where each $\lfloor u_i\rfloor_{n_i}\in Irr(S),\ \alpha_i, \beta_j\in
\mathbf{k},\ s_j\in S, \lfloor b_j\rfloor\in \lfloor X^*\rfloor$, $\lfloor u_i\rfloor_{n_i}\leq \overline{f}$ and
$\overline{\lfloor s_jb_j\rfloor_{m_j}}\leq\overline{f}$.
\end{lemma}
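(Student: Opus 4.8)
The plan is to argue by induction on the leading monomial $\overline{f}$ with respect to the well ordering $>$ on $\lfloor X^+ \rfloor_{_1}\cup \lfloor XX \rfloor_{_{2}}$; equivalently, one takes among all nonzero $f$ violating the conclusion one whose leading monomial $\overline{f}$ is minimal and derives a contradiction. Throughout I write $f=lt(f)+r\!_{_f}$ with $lt(f)=lc(f)\,\overline{f}$, so that either $r\!_{_f}=0$ or $\overline{r\!_{_f}}<\overline{f}$, and the induction hypothesis is applicable to $r\!_{_f}$ or to any polynomial with leading monomial strictly below $\overline{f}$.

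The induction splits according to whether $\overline{f}\in Irr(S)$. If $\overline{f}\in Irr(S)$, then $lc(f)\,\overline{f}$ is already an admissible term of the first type, with $\overline{f}\leq\overline{f}$, and I apply the induction hypothesis to $r\!_{_f}$ to obtain an expression of the desired shape in which every monomial is bounded by $\overline{r\!_{_f}}<\overline{f}$; adding back $lc(f)\,\overline{f}$ gives the claim for $f$. If instead $\overline{f}\notin Irr(S)$, then by the very definition of $Irr(S)$ there is a normal $S$-polynomial $\lfloor sb\rfloor_m$ with $\overline{\lfloor sb\rfloor_m}=\lfloor \widetilde{s}b\rfloor_m=\overline{f}$, and I pass to $f':=f-lc(f)\lfloor sb\rfloor_m$.

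The point that makes this subtraction work is that every normal $S$-polynomial is monic: when $b=\varepsilon$ this is just the hypothesis that $S$ is monic, while in the two remaining cases of $(\ref{nsw})$ the relevant $s$ is strong (note that $|\widetilde{s}|=1$ forces $s$ to be strong, since the only monomial with associative word a single letter is that letter itself), so Lemma \ref{r1} yields $\overline{\lfloor sb\rfloor_m}=\overline{s}\vdash b$ or $\overline{s}\dashv\lfloor b\rfloor_1$ with leading coefficient $lc(s)=1$. Hence $lt(f)=lc(f)\,\overline{f}=lc(f)\,lt(\lfloor sb\rfloor_m)$, the leading terms cancel, and either $f'=0$ or $\overline{f'}<\overline{f}$. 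Applying the induction hypothesis to $f'$ expresses it in the required form with all monomials bounded by $\overline{f'}<\overline{f}$, and since $lc(f)\lfloor sb\rfloor_m$ is a scalar multiple of a normal $S$-polynomial with $\overline{\lfloor sb\rfloor_m}=\overline{f}\leq\overline{f}$, adding it back produces the representation for $f$.

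Beyond the routine bookkeeping of the two sums and their bounds, the only step needing care is the monic-ness of normal $S$-polynomials just noted, for it guarantees that a single reduction step strictly lowers the leading monomial; the well ordering then prevents infinite descent and legitimizes the induction, so I expect no genuine obstacle here.
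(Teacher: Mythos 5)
Your proof is correct and follows essentially the same route as the paper's: split on whether $\overline{f}\in Irr(S)$, subtract $lc(f)\,\overline{f}$ or $lc(f)\lfloor sb\rfloor_m$ respectively, and induct on $\overline{f}$ under the well ordering. The only addition is your explicit verification (via Lemma \ref{r1} and the observation that $|\widetilde{s}|=1$ forces $s$ strong) that normal $S$-polynomials are monic, a point the paper leaves implicit.
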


\begin{proof}
If $\overline{f}\in Irr(S)$,
then take $\lfloor u\rfloor_n=\overline{f}$ and $f_1=f-lc(f) \lfloor u\rfloor_n$. If $\overline{f}\notin Irr(S)$,
then $\overline{f}=\overline{\lfloor sb\rfloor_m}$ for some normal $S$\!-polynomial $\lfloor sb\rfloor_m$
and take $f_1=f-lc(f)\lfloor sb\rfloor_m$.
In both cases, we have $\overline{f_1}<\overline{f}$ and the result
follows from induction on $\overline{f}$.
\end{proof}

\begin{lemma}\label{lkey2}
Let $S$ be a Gr\"{o}bner--Shirshov basis in $Di[X]$, $\lfloor s_1b_1\rfloor_{m_1}$ and $\lfloor s_2b_2\rfloor_{m_2}$ normal $S$\!-polynomials.
If $\lfloor w\rfloor_m=\overline{\lfloor s_1b_1\rfloor_{m_1}}=\overline{\lfloor s_2b_2\rfloor_{m_2}}$, then
$$
\lfloor s_1b_1\rfloor_{m_1}-\lfloor s_2b_2\rfloor_{m_2}\equiv 0 \mod(S,\lfloor w\rfloor_m).
$$
\end{lemma}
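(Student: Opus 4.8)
The plan is to identify, for each way two normal $S$-polynomials can acquire the same leading monomial, the composition from Definition \ref{compo} responsible for the coincidence, and then to transport the triviality of that composition from the level of $\mathrm{lcm}\{\widetilde{s_1},\widetilde{s_2}\}$ up to the level $\lfloor w\rfloor_m$. First I would reduce to a monic difference: since $S$ is monic, the description $(\ref{nsw})$ of $\lfloor sb\rfloor_m$ together with Lemma \ref{r1} (for the strong, $\dashv$-case) shows that every normal $S$-polynomial has leading coefficient $1$, so the leading terms of $\lfloor s_1b_1\rfloor_{m_1}$ and $\lfloor s_2b_2\rfloor_{m_2}$ cancel and $h:=\lfloor s_1b_1\rfloor_{m_1}-\lfloor s_2b_2\rfloor_{m_2}$ already satisfies $\overline{h}<\lfloor w\rfloor_m$. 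If $s_1=s_2$ the two polynomials coincide and $h=0$, so I may assume $s_1\neq s_2$.

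The core is a case analysis governed by the center index $m\in\{1,2\}$ and by how the commutative words $\widetilde{s_1},\widetilde{s_2}$ sit inside $\lfloor w\rfloor=\lfloor\widetilde{s_1}b_1\rfloor=\lfloor\widetilde{s_2}b_2\rfloor$. If $\overline{s_1}=\overline{s_2}$ the responsible datum is the equal composition $s_1-s_2$; if $\widetilde{s_1}=\widetilde{s_2}$ but $\overline{s_1}\neq\overline{s_2}$ (which forces $|\widetilde{s_1}|=2$) it is the equal multiplication composition; and if $\widetilde{s_1}\neq\widetilde{s_2}$ share at least one letter it is a short or long intersection composition, indexed by $\lfloor\mathrm{lcm}\{\widetilde{s_1},\widetilde{s_2}\}\rfloor$. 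In each of these cases I would write $h$ as that composition right-multiplied by the complementary word $t$ determined by $\lfloor\widetilde{s_1}b_1\rfloor=\mathrm{lcm}\{\widetilde{s_1},\widetilde{s_2}\}\cdot t$, use that $S$ is a Gr\"{o}bner--Shirshov basis to replace the composition by a combination $\sum_i\alpha_i\lfloor s_i'c_i\rfloor_{l_i}$ with leading monomials below its index, and then apply Lemma \ref{l51}(i)--(ii) so that each resulting $\lfloor s_i'c_i\rfloor_{l_i}\dashv\lfloor t\rfloor_1$ stays trivial modulo $S$.

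One configuration is not covered by any composition, namely coprime leading words $\widetilde{s_1}\cap\widetilde{s_2}=\emptyset$ (including the case of two length-one leaders whose $\vdash$-products give the same $\lfloor xx'\rfloor_2$). Here I would argue directly from commutativity: writing $\lfloor w\rfloor=\lfloor\widetilde{s_1}\widetilde{s_2}t\rfloor$ and using associativity and commutativity of $\dashv$ (resp. $\vdash$), $h$ equals $s_2\dashv(r\!_{_{s_1}}\dashv\lfloor t\rfloor_1)-s_1\dashv(r\!_{_{s_2}}\dashv\lfloor t\rfloor_1)$, a manifest combination of normal $S$-polynomials whose leading monomials are $<\lfloor w\rfloor_m$ because $\overline{r\!_{_{s_i}}}<\widetilde{s_i}$; this is the Gr\"{o}bner-basis phenomenon that the difference attached to coprime leaders reduces automatically, the analogue of Buchberger's first criterion.

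The step I expect to be the main obstacle is the center-index bookkeeping in the lifting. For $|uv|>2$ the multiplication table makes $\vdash$ and $\dashv$ coincide, so right multiplication by a nonempty word collapses $\lfloor xx'\rfloor_2$ and $\lfloor xx'\rfloor_1$ to the \emph{same} monomial $\lfloor w\rfloor_1$. Consequently a term of leading monomial $\lfloor xx'\rfloor_1$ occurring in a composition's representation, though strictly below that composition's index $\lfloor xx'\rfloor_2$, may be raised to exactly $\lfloor w\rfloor_1$ after $\dashv\lfloor t\rfloor_1$, so strict descent below $\lfloor w\rfloor_m$ cannot be certified term by term. Overcoming this forces the whole argument to run as an induction on $\lfloor w\rfloor_m$, in which such borderline terms, necessarily occurring in cancelling pairs since $\overline{h}<\lfloor w\rfloor_m$, are reprocessed; it also uses the observation that special compositions evaporate upon lifting, because $\vdash$ and $\dashv$ agree after multiplying by a word of length $\geq 1$. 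Verifying throughout that the strong/non-strong dichotomy keeps every $\dashv$-multiple a legitimate normal $S$-polynomial (again via Lemma \ref{r1}) is where the bulk of the careful work lies.
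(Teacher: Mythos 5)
Your case analysis and lifting mechanism are exactly those of the paper's proof: identify the composition responsible for the coincidence of leading monomials (equal, equal multiplication, short or long intersection), right-multiply it by the complementary word, and invoke Lemmas \ref{l51} and \ref{r1}; the disjoint case is likewise handled in the paper by the direct computation $s_1\dashv(\overline{s_2}-s_2)\dashv\lfloor c\rfloor_1+s_2\dashv(s_1-\overline{s_1})\dashv\lfloor c\rfloor_1$, which is your ``Buchberger first criterion'' paragraph. Up to that point your proposal matches the paper.

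The flaw is in your last paragraph. You correctly observe that strict descent at the level of monomials need not survive right multiplication (indeed $\lfloor xx'\rfloor_1<\lfloor xx'\rfloor_2$ while both collapse to $\lfloor xx'b\rfloor_1$), but your proposed remedy --- an induction on $\lfloor w\rfloor_m$ in which ``borderline'' terms with leading monomial exactly $\lfloor w\rfloor_m$, occurring in cancelling pairs, are reprocessed --- is circular. Rewriting a cancelling pair of normal $S$-polynomials with common leading monomial $\lfloor w\rfloor_m$ as a combination with leading monomials strictly below $\lfloor w\rfloor_m$ is precisely the statement of Lemma \ref{lkey2} at $\lfloor w\rfloor_m$ itself, and an induction hypothesis valid only for strictly smaller monomials gives you nothing about such pairs. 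What actually closes the argument --- and what the paper's terse appeals to Lemmas \ref{l51} and \ref{r1} silently rely on --- is that the borderline scenario never occurs: whenever a composition gets multiplied by a nonempty word in the case analysis, the polynomials involved are strong ($s_i\dashv\lfloor b_i\rfloor_1$ with $\lfloor b_i\rfloor\in\lfloor X^+\rfloor$ is normal only for strong $s_i$, and any $s$ with $|\widetilde{s}|=1$ is automatically strong). Strongness forces every monomial of the relevant composition, after its two leading terms cancel (or merge, as in the equal multiplication composition $(s_1-s_2)\dashv x=(r\!_{_{s_1}}-r\!_{_{s_2}})\dashv x$), to have associative word strictly smaller than the word of the composition's index; since the monomial-center ordering compares words first and the ordering on $\lfloor X^+\rfloor$ is monomial, this word-level strictness does survive multiplication by $\lfloor b\rfloor_1$. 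Hence strict descent below $\lfloor w\rfloor_m$ is certified term by term after all, with no induction --- by the same strongness fact you already use in your disjoint case (note it should read $\widetilde{r\!_{_{s_i}}}<\widetilde{s_i}$, not $\overline{r\!_{_{s_i}}}<\widetilde{s_i}$).
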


\begin{proof} Since $\lfloor w\rfloor_m=\overline{\lfloor s_1b_1\rfloor_{m_1}}=\overline{\lfloor s_2b_2\rfloor_{m_2}}$,
it follows that $\lfloor w\rfloor=\lfloor \widetilde{s_1}b_1\rfloor=\lfloor\widetilde{s_2}b_2\rfloor$ and $m=m_1=m_2$.

If $b_1=b_2=\varepsilon$, then $\overline{s_1}=\overline{s_2}$ and we are done by the triviality of equal composition.

Suppose only one of $b_1$ and $b_2$ is empty, say, $b_1=\varepsilon$ and $b_2\neq \varepsilon$.
Then $\overline{s_1}=\overline{\lfloor s_2b_2\rfloor_{m_2}}$, $\lfloor b_2\rfloor\in \lfloor X^+\rfloor$ and $|\widetilde{s_1}|+|\widetilde{s_2}|\geq3$.
We thus have done by the triviality of short and long intersection composition.

Suppose that $b_1\neq\varepsilon$ and $b_2\neq \varepsilon$. Thus $s_1,s_2$ are strong. Here we need to consider two cases:

Case 1. $\widetilde{s_1}$ and $\widetilde{s_2}$ are mutually disjoint. We may assume that $\lfloor b_1\rfloor=\lfloor \widetilde{s_2}c\rfloor, \lfloor b_2\rfloor=\lfloor \widetilde{s_1}c\rfloor$,
where $\lfloor c\rfloor \in \lfloor X^*\rfloor$.
This splits into two cases depending on whether $m=1$ or $m=2$.
By (\ref{nsw}) and Lemmas \ref{l51} and \ref{r1},
\begin{displaymath}\lfloor s_1b_1\rfloor_{m_1}-\lfloor s_2b_2\rfloor_{m_2}=
\begin{cases}
\text{ $s_1\dashv \lfloor b_1\rfloor_1-s_2\dashv \lfloor b_2\rfloor_1
=s_1\dashv \overline{s_2}\dashv \lfloor c\rfloor_1-s_2\dashv \overline{s_1}\dashv \lfloor c\rfloor_1$}\\
\text{ $s_1\vdash \lfloor b_1\rfloor_1-s_2\vdash \lfloor b_2\rfloor_1
=s_1\vdash \overline{s_2}-s_2\vdash \overline{s_1},\ \ \ \  b_1,b_2\in X$}
\end{cases}
\end{displaymath}
\begin{displaymath}\ \ \ \ \ \ \ \ \ \ \ \ \ \ \ \ \ \ \ \ \ \ \ \ \ \ \ \ \ \ \ \ \ \ \ \ \ \ \ \ \ \ \ \ \ \ =
\begin{cases}
\text{ $s_1\dashv  (\overline{s_2}-s_2)\dashv \lfloor c\rfloor_1+s_2\dashv (s_1-\overline{s_1})\dashv \lfloor c\rfloor_1\equiv 0 \mod(S,\lfloor w\rfloor_1)$}\\
\text{ $s_1\vdash  (\overline{s_2}-s_2)+s_2\vdash (s_1-\overline{s_1})\equiv 0 \mod(S,\lfloor w\rfloor_2).$}
\end{cases}
\end{displaymath}

Case 2. $\widetilde{s_1}$ and $\widetilde{s_2}$ have a nonempty intersection.
In this case, we need to discuss three sub-cases:

Case 2.1. $|\widetilde{s_1}|+|\widetilde{s_2}|=2$.
Thus $\overline{s_1}=\overline{s_2}$ and $\lfloor b_1\rfloor=\lfloor b_2\rfloor$.
We have two possibilities depending on whether $m=1$ or $m=2$.
By the triviality of equal composition and Lemmas \ref{l51} and \ref{r1},
\begin{displaymath}\lfloor s_1b_1\rfloor_{m_1}-\lfloor s_2b_2\rfloor_{m_2}=
\begin{cases}
\text{ $s_1\dashv\lfloor b_1\rfloor_{1}-s_2\dashv\lfloor b_2\rfloor_{1}
=(s_1-s_2)\dashv \lfloor b_1\rfloor_1
\equiv 0 \mod(S,\lfloor w\rfloor_1)$}\\
\text{ $s_1\vdash\lfloor b_1\rfloor_{1}-s_2\vdash\lfloor b_2\rfloor_{1}
=(s_1-s_2)\vdash \lfloor b_1\rfloor_1
\equiv 0 \mod(S,\lfloor w\rfloor_2).$}
\end{cases}
\end{displaymath}

Case 2.2. $|\widetilde{s_1}|+|\widetilde{s_2}|=3$. Then $m_1=m_2=1$. We may assume that $|\widetilde{s_1}|=2$ and $|\widetilde{s_2}|=1$, i.e.
$\widetilde{s_1}=\lfloor\widetilde{s_2}x\rfloor$ and $\lfloor b_2\rfloor=\lfloor xb_1\rfloor$, where $x\in X$.
It follows that $\overline{s_1}=\overline{\lfloor s_2x\rfloor_l}$ for some $l=1,2$.
By the triviality of short composition and Lemmas \ref{l51} and \ref{r1},
$$
\lfloor s_1b_1\rfloor_{1}-\lfloor s_2b_2\rfloor_{1}
=s_1\dashv \lfloor b_1\rfloor_1-s_2\dashv \lfloor xb_1\rfloor_1=(s_1-\lfloor s_2x\rfloor_l)\dashv \lfloor b_1\rfloor_1
\equiv 0 \mod(S,\lfloor w\rfloor_1).
$$

Case 2.3. $|\widetilde{s_1}|+|\widetilde{s_2}|>3$. Then $m_1=m_2=1$. If $\widetilde{s_1}=\widetilde{s_2}$, then $\lfloor b_1\rfloor=\lfloor b_2\rfloor$.
We are done by the triviality of equal composition and equal multiplication composition, and Lemmas \ref{l51} and \ref{r1}.
Suppose that $\widetilde{s_1}\neq\widetilde{s_2}$ and
$lcm\{\widetilde{s_1},\widetilde{s_2}\}=\lfloor w_1\rfloor=\lfloor\widetilde{s_1}a_1\rfloor=\lfloor\widetilde{s_2}a_2\rfloor$, where $\lfloor a_1\rfloor, \lfloor a_2\rfloor\in \lfloor X^*\rfloor$. Then
$\lfloor b_1\rfloor=\lfloor a_1c\rfloor, \  \lfloor b_2\rfloor=\lfloor a_2c\rfloor$ for some $\lfloor c\rfloor\in \lfloor X^*\rfloor$ and $|w_1|<|\widetilde{s_1}|+|\widetilde{s_2}|$. Since  $\widetilde{s_1}\neq\widetilde{s_2}$, we may assume that $\lfloor a_2\rfloor\in \lfloor X^+\rfloor$. Then we have the long composition $\lfloor s_1a_1\rfloor_1-\lfloor s_2a_2\rfloor_1$.
By Lemmas \ref{l51} and \ref{r1},
$$
\lfloor s_1b_1\rfloor_{1}-\lfloor s_2b_2\rfloor_{1}=s_1\dashv\lfloor a_1c\rfloor_1-s_2\dashv\lfloor a_2c\rfloor_1=
(\lfloor s_1a_1\rfloor_1-\lfloor s_2a_2\rfloor_1)\dashv \lfloor c\rfloor_1
\equiv 0 \mod(S,\lfloor w\rfloor_1).
$$

The proof is complete.
\end{proof}

\ \

The following theorem is the main result in this article.

\begin{theorem}\label{cd2}
(Composition-Diamond lemma for commutative dialgebras) \ Let $S$ be a monic subset of $Di[X]$,
$>$ a monomial-center ordering on $\lfloor X^+ \rfloor_{_1}\cup \lfloor XX \rfloor_{_{2}}$ and
$Id(S)$ the ideal of $Di[X]$ generated by $S$. Then the following statements are equivalent.
\begin{enumerate}
\item[(i)] \ $S$ is a Gr\"{o}bner--Shirshov basis in $Di[X]$.
\item[(ii)] \ $0\neq f\in Id(S)\Rightarrow \overline{f}=\overline{\lfloor sb\rfloor_m}$ for some normal $S$\!-polynomial $\lfloor sb\rfloor_m$.
\item[(iii)] \ The set
$$
Irr(S)=\{\lfloor a\rfloor_n\in \lfloor X^+ \rfloor_{_1}\cup \lfloor XX \rfloor_{_{2}} \mid
\lfloor a\rfloor_n\neq \overline{\lfloor sb\rfloor_m}\
\mbox{ for any normal }S\!\mbox{-polynomial }\lfloor sb\rfloor_m\}
$$
is a $\mathbf{k}$-basis of the commutative dialgebra $Di[X|S]:=Di[X]/Id(S)$.
\end{enumerate}
\end{theorem}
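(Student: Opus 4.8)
The plan is to establish the cycle $(i)\Rightarrow(ii)\Rightarrow(iii)\Rightarrow(i)$. The substantive step is $(i)\Rightarrow(ii)$; the remaining two implications are organizational, resting on Lemmas \ref{l51} and \ref{l00}, because Lemma \ref{lkey2} has already dispatched the one delicate confluence computation, namely that of two normal $S$-polynomials sharing a leading monomial.

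For $(i)\Rightarrow(ii)$, fix $0\neq f\in Id(S)$ and use Lemma \ref{l51}(iii) to write $f=\sum_{i=1}^{n}\alpha_i\lfloor s_ib_i\rfloor_{m_i}$ with every $\alpha_i\neq0$. Put $\lfloor w\rfloor_m:=\max_i\overline{\lfloor s_ib_i\rfloor_{m_i}}$, so that $\overline{f}\leq\lfloor w\rfloor_m$. I would select, over all such representations of $f$, one that is minimal for the lexicographic comparison of the pair $(\lfloor w\rfloor_m,k)$, where $k$ is the number of summands attaining the top monomial $\lfloor w\rfloor_m$; this is legitimate since $>$ is a well ordering. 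The goal is $\overline{f}=\lfloor w\rfloor_m$, which gives $(ii)$ at once. If instead $\overline{f}<\lfloor w\rfloor_m$, the top-level coefficients must cancel, so $k\geq2$; choosing two witnesses $i=1,2$ I regroup
\[
\alpha_1\lfloor s_1b_1\rfloor_{m_1}+\alpha_2\lfloor s_2b_2\rfloor_{m_2}
=\alpha_1\bigl(\lfloor s_1b_1\rfloor_{m_1}-\lfloor s_2b_2\rfloor_{m_2}\bigr)+(\alpha_1+\alpha_2)\lfloor s_2b_2\rfloor_{m_2}.
\]
By Lemma \ref{lkey2} the bracketed difference is $\equiv0\mod(S,\lfloor w\rfloor_m)$, hence a combination of normal $S$-polynomials with leading monomials strictly below $\lfloor w\rfloor_m$. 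Reinserting this expansion yields a new representation of $f$ in which $(\lfloor w\rfloor_m,k)$ has strictly dropped, since either $\lfloor w\rfloor_m$ falls or it persists while $k$ decreases, contradicting minimality. Therefore $\overline{f}=\lfloor w\rfloor_m=\overline{\lfloor s_ib_i\rfloor_{m_i}}$ for some $i$.

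For $(ii)\Rightarrow(iii)$, spanning follows directly from Lemma \ref{l00}, which expresses each $f\in Di[X]$ as a $\mathbf{k}$-combination of elements of $Irr(S)$ plus a member of $Id(S)$; independence is where $(ii)$ enters. If $\sum_i\alpha_i\lfloor u_i\rfloor_{n_i}\in Id(S)$ with distinct $\lfloor u_i\rfloor_{n_i}\in Irr(S)$ and some $\alpha_i\neq0$, then this polynomial is nonzero and its leading monomial lies in $Irr(S)$, while $(ii)$ forces it to be the leading monomial of a normal $S$-polynomial, a contradiction, so all $\alpha_i=0$. For $(iii)\Rightarrow(i)$, note first that every composition in Definition \ref{compo} is assembled from elements of $S$ and their normal $S$-polynomials, hence lies in $Id(S)$. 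Given a composition $h$, Lemma \ref{l00} writes $h=\sum_i\alpha_i\lfloor u_i\rfloor_{n_i}+\sum_j\beta_j\lfloor s_jb_j\rfloor_{m_j}$ with $\lfloor u_i\rfloor_{n_i}\in Irr(S)$ and all leading monomials $\leq\overline{h}$; since $h$ and the second sum lie in $Id(S)$, so does $\sum_i\alpha_i\lfloor u_i\rfloor_{n_i}$, whence the independence just proved forces all $\alpha_i=0$. Thus $h=\sum_j\beta_j\lfloor s_jb_j\rfloor_{m_j}$ with each $\overline{\lfloor s_jb_j\rfloor_{m_j}}\leq\overline{h}$, i.e.\ $h$ is trivial modulo $S$, so $S$ is a Gr\"{o}bner--Shirshov basis.

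The main obstacle is the bookkeeping inside $(i)\Rightarrow(ii)$: one must set up the correct well-founded induction, lexicographic on the pair $(\lfloor w\rfloor_m,k)$, and verify that reinserting the Lemma \ref{lkey2} expansion cannot reintroduce monomials at or above $\lfloor w\rfloor_m$, so that each rewriting step strictly decreases the pair. Everything algebraically delicate, namely the six composition types and their individual confluence, is already packaged in Lemma \ref{lkey2}, so beyond this termination argument the proof is formal.
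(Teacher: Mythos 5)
Your proposal is correct and follows essentially the same route as the paper: the cycle $(i)\Rightarrow(ii)\Rightarrow(iii)\Rightarrow(i)$, with Lemma \ref{l51}(iii) providing the initial representation, Lemma \ref{lkey2} used to merge two top summands, and Lemma \ref{l00} handling the last two implications. Your choice of a representation minimizing the pair $(\lfloor w\rfloor_m,k)$ is just the minimal-counterexample phrasing of the paper's induction on $(\lfloor w_1\rfloor_{m_1},l)$, so the arguments coincide in substance.
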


\begin{proof} $(i)\Rightarrow (ii)$. Let $0\neq f\in Id(S)$. Then by
Lemma \ref{l51} $f$ has an expression
\begin{equation}\label{e4c}
f=\sum_i \alpha_i\lfloor s_ib_i\rfloor_{m_i},
\end{equation}
where each $0\neq\alpha_i\in \mathbf{k}, \ \lfloor b_i\rfloor\in \lfloor X^*\rfloor, \ s_i\in S$. Write
$\lfloor w_i\rfloor_{m_i}=\overline{\lfloor s_ib_i\rfloor_{m_i}}=\lfloor\widetilde{s_i}b_i\rfloor_{m_i}, i=1,2,\cdots$.
We may assume without loss of generality that
$$
\lfloor w_1\rfloor_{m_1}=\lfloor w_2\rfloor_{m_2}=\cdots=\lfloor w_l\rfloor_{m_l}>\lfloor w_{l+1}\rfloor_{m_{l+1}}\geq
\lfloor w_{l+2}\rfloor_{m_{l+2}}\geq\cdots .
$$

If $l=1$, then
$\overline{f}=\overline{\lfloor s_1b_1\rfloor_{m_1}}$ and
the result holds. Suppose that $l\geq 2$.
Then
$$
\lfloor w_1\rfloor_{m_1}=\overline{\lfloor s_1b_1\rfloor_{m_1}}=\overline{\lfloor s_2b_2\rfloor_{m_2}}.
$$
By Lemma \ref{lkey2}, we can rewrite the first two summands of (\ref{e4c}) in the form
\begin{eqnarray*}
\alpha_1\lfloor s_1b_1\rfloor_{m_1}+\alpha_2\lfloor s_2b_2\rfloor_{m_1}
&=&(\alpha_1+\alpha_2)\lfloor s_1b_1\rfloor_{m_1}+\alpha_2(\lfloor s_2b_2\rfloor_{m_1}-\lfloor s_1b_1\rfloor_{m_1}) \\
&=&(\alpha_1+\alpha_2)\lfloor s_1b_1\rfloor_{m_1}+\sum_j \alpha_2\beta_j\lfloor s'_jd_j\rfloor_{n_j},
\end{eqnarray*}
where each $\lfloor s'_jd_j\rfloor_{n_j}$ is a normal $S$\!-polynomial and $\overline{\lfloor s'_jd_j\rfloor_{n_j}}<\lfloor w_1\rfloor_{m_1}$.
Thus the result follows from induction on $(\lfloor w_1\rfloor_{m_1},l)$.

$(ii)\Rightarrow (iii)$. By Lemma \ref{l00}, the set $Irr(S)$ generates
$Di[X|S]$ as a linear space. On the other hand, suppose that
$h=\sum_i\alpha_i\lfloor u_i\rfloor_{l_i}=0$ in $Di[X|S]$, where
each $\alpha_i\in \mathbf{k}$, $\lfloor u_i\rfloor_{l_i}\in {Irr(S)}$. This means that
$h\in Id(S)$.
Then all $\alpha_i$ must be equal to zero. Otherwise, $\overline{h}=\lfloor u_j\rfloor_{l_j}$ for some $j$ which contradicts $(ii)$.

$(iii)\Rightarrow(i)$. Suppose that $h$ is a composition
of elements of $S$. Clearly, $h\in Id(S)$. By Lemma \ref{l00},
$$
h=\sum_i\alpha_{i}\lfloor u_{i}\rfloor_{n_i}+\sum_j\beta_{j}\lfloor s_jb_j\rfloor_{m_j},
$$
where each $\lfloor u_i\rfloor_{n_i}\in Irr(S),\ \alpha_i, \beta_j\in
\mathbf{k},\ \lfloor b_j\rfloor\in \lfloor X^*\rfloor,\ s_j\in S$, $\lfloor u_i\rfloor_{n_i}\leq \overline{h}$ and
$\lfloor\widetilde{s_j}b_j\rfloor_{m_j}\leq\overline{h}$.
Then $\sum\alpha_{i}\lfloor u_{i}\rfloor_{n_i}\in Id(S)$.
By $(iii)$, we have all $\alpha_{i}=0$ and
$h$ is trivial modulo $S$.
\end{proof}

\ \

\noindent{\bf Buchberger--Shirshov  algorithm} If a monic subset $S$ of $Di[X]$ is not a Gr\"{o}bner--Shirshov basis
then one can add to $S$ all nontrivial compositions. Continuing this process
repeatedly, we finally obtain a Gr\"{o}bner--Shirshov basis $S^{comp}$ that contains $S$
and generates the same ideal, $Id(S^{comp})=Id(S)$.

\begin{remark} In Theorem \ref{cd2}, if $\dashv\ =\ \vdash$, then $Di[X]=\mathbf{k}[X]$ is free commutative algebra generated by $X$ and Theorem \ref{cd2}
is Buchberger Theorem in \cite{bu65}.
\end{remark}

A Gr\"{o}bner--Shirshov basis $S$ in $Di[X]$ is \textit{minimal} if for any $s\in S$, $\overline{s}\in Irr(S \backslash \{s\})$.
A Gr\"{o}bner--Shirshov basis $S$ in $Di[X]$ is \textit{reduced} if for any $s\in S$, $supp(s)\subseteq Irr(S \backslash \{s\})$.
If $S$ is a Gr\"{o}bner--Shirshov basis in $Di[X]$, then we also call $S$  a Gr\"{o}bner--Shirshov basis for $I=Id(S)$.

\begin{lemma}\label{remark5.8}
Let $I$ be an ideal of $Di[X]$ and $S$ a monic subset of $I$. If, for all nonzero $f\in I$, there exists a normal $S$\!-polynomial $\lfloor sb\rfloor_m$ such that
$\overline{f}=\overline{\lfloor sb\rfloor_m}$, then $I=Id(S)$ and $S$ is a Gr\"{o}bner--Shirshov basis for $I$.
\end{lemma}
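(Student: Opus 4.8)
The plan is to prove the two assertions in turn: first that $I=Id(S)$, and then that $S$ is a Gr\"{o}bner--Shirshov basis. Since $S\subseteq I$ and $I$ is an ideal, the inclusion $Id(S)\subseteq I$ is automatic, so the entire content of the first assertion is the reverse inclusion $I\subseteq Id(S)$. The second assertion will then follow almost for free, because the hypothesis of the lemma is exactly condition $(ii)$ of Theorem \ref{cd2}: once $I=Id(S)$, every nonzero $f\in Id(S)$ satisfies $\overline{f}=\overline{\lfloor sb\rfloor_m}$ for some normal $S$\!-polynomial, and the implication $(ii)\Rightarrow(i)$ of that theorem gives that $S$ is a Gr\"{o}bner--Shirshov basis for $I$.

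For the reverse inclusion I would use a minimal-counterexample argument on the leading monomial, which is legitimate because $>$ is a well ordering on $\lfloor X^+ \rfloor_{_1}\cup \lfloor XX \rfloor_{_{2}}$. Two preliminary facts are needed. First, every normal $S$\!-polynomial $\lfloor sb\rfloor_m$ lies in $Id(S)$: it is either $s$ itself, or one of the products $s\vdash b$, $s\dashv\lfloor b\rfloor_1$ with $s\in S\subseteq Id(S)$, and $Id(S)$ is an ideal. Second, every normal $S$\!-polynomial is monic. Granting these, let $f$ be a nonzero element of $I$ of minimal leading monomial among those not lying in $Id(S)$, if any exists. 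By hypothesis $\overline{f}=\overline{\lfloor sb\rfloor_m}$ for some normal $S$\!-polynomial, and since $\lfloor sb\rfloor_m$ is monic the element $f_1:=f-lc(f)\lfloor sb\rfloor_m$ satisfies $\overline{f_1}<\overline{f}$ or $f_1=0$. As $\lfloor sb\rfloor_m\in Id(S)\subseteq I$, we also have $f_1\in I$. If $f_1=0$ then $f=lc(f)\lfloor sb\rfloor_m\in Id(S)$; otherwise $\overline{f_1}<\overline{f}$ forces $f_1\in Id(S)$ by minimality, and again $f\in Id(S)$. Either way we contradict the choice of $f$, so no such $f$ exists and $I\subseteq Id(S)$.

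I expect the only genuinely delicate point to be the monicity of normal $S$\!-polynomials, everything else being formal. The subtlety is to check that in $s\vdash b$ (resp.\ $s\dashv\lfloor b\rfloor_1$) the leading term is produced by $lt(s)$ alone and is not cancelled or overtaken by a contribution from $r\!_{_s}$. In each nontrivial case the defining polynomial $s$ is strong -- for $s\dashv\lfloor b\rfloor_1$ by the definition of a normal polynomial, and for $s\vdash b$ because $|\widetilde{s}|=1$ rules out the non-strong shape of Proposition \ref{pnotstrong} -- so the ``strong'' clause of Lemma \ref{r1} yields $\overline{s\vdash b}=\overline{s}\vdash b$ (resp.\ $\overline{s\dashv\lfloor b\rfloor_1}=\overline{s}\dashv\lfloor b\rfloor_1$) with leading coefficient $lc(s)=1$. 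Finally, one should note that appealing to $(ii)\Rightarrow(i)$ in Theorem \ref{cd2} is not circular: in the proof of that theorem the route $(ii)\Rightarrow(iii)\Rightarrow(i)$ relies only on Lemma \ref{l00}, which assumes nothing beyond $S$ being monic.
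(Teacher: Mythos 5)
Your proposal is correct and follows essentially the same route as the paper: the paper also deduces $I\subseteq Id(S)$ by subtracting $lc(f)\lfloor sb\rfloor_m$ and inducting on the well-ordered leading monomial (your minimal-counterexample phrasing is the same argument), then invokes Theorem \ref{cd2} for the Gr\"{o}bner--Shirshov basis claim. Your added checks --- that normal $S$\!-polynomials lie in $Id(S)$, are monic via the strong clause of Lemma \ref{r1} and Proposition \ref{pnotstrong}, and that the appeal to $(ii)\Rightarrow(i)$ is not circular --- are details the paper leaves implicit, and they are verified correctly.
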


\begin{proof} Clearly, $Id(S)\subseteq I$. For any nonzero $f\in I$, $\overline{f}=\overline{\lfloor sb\rfloor_m}$
for some  normal $S$\!-polynomial $\lfloor sb\rfloor_m$. Thus $f_1=f-lc(f)\lfloor sb\rfloor_m\in I$ and $\overline{f_1}<\overline{f}$.
By induction on $\overline{f}$, $f_1\in Id(S)$. Hence $f=f_1+lc(f)\lfloor sb\rfloor_m\in Id(S)$.
This shows that $I= Id(S)$. Now the result follows from Theorem \ref{cd2}.
\end{proof}

\begin{lemma}\label{lpredu}
Let $S$ be a Gr\"{o}bner--Shirshov basis in $Di[X]$ and $f\in S$. If $\overline{f}\notin Irr(S \backslash \{f\})$,
then $S \backslash \{f\}$ is a Gr\"{o}bner--Shirshov basis for $Id(S)$.
\end{lemma}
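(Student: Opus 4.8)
The plan is to deduce the statement from Lemma \ref{remark5.8}. Put $S':=S\setminus\{f\}$ and $I:=Id(S)$. Since $S'\subseteq S\subseteq I$ and $S'$ is monic, by Lemma \ref{remark5.8} it suffices to show that every nonzero $g\in Id(S)$ satisfies $\overline g=\overline{\lfloor s'b'\rfloor_{m'}}$ for some normal $S'$-polynomial $\lfloor s'b'\rfloor_{m'}$; this will deliver both $Id(S')=Id(S)$ and the Gr\"obner--Shirshov property of $S'$ in one stroke. Because $S$ is a Gr\"obner--Shirshov basis, Theorem \ref{cd2}(ii) gives $\overline g=\overline{\lfloor sb\rfloor_m}=\lfloor\widetilde s\,b\rfloor_m$ for some $s\in S$. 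If $s\neq f$ this is already a normal $S'$-polynomial and there is nothing to do, so the whole problem collapses to the following monomial claim: whenever $\lfloor\widetilde f\,b\rfloor_m$ is the leading monomial of a normal $f$-polynomial, it is also the leading monomial of a normal $S'$-polynomial.

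To prove the claim I would first invoke the hypothesis $\overline f\notin Irr(S')$ to fix $s_0\in S'$ and a normal $s_0$-polynomial $\lfloor s_0c\rfloor_{m_0}$ with $\overline{\lfloor s_0c\rfloor_{m_0}}=\overline f$; by Lemma \ref{lorder} this means $\widetilde f=\lfloor\widetilde{s_0}\,c\rfloor$ and $\overline f$ has centre $m_0$. By (\ref{nsw}) the leading monomials of normal $f$-polynomials are: only $\overline f$ itself when $f$ is not strong and $|\widetilde f|>1$ (so the claim is immediate since $\overline f\in Irr(S')^{c}$ via $s_0$); and otherwise $\overline f$ together with all $\dashv$-extensions $\lfloor\widetilde f a\rfloor_1$ (plus the $\vdash$-extensions $\lfloor\widetilde f x\rfloor_2$ when $|\widetilde f|=1$). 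In the routine configurations this all propagates from $s_0$: if $\overline f$ arises from $s_0$ as a proper extension, or if $|\widetilde{s_0}|=1$, or if $s_0$ is strong, then appending the extra letters to $c$ keeps us in the same branch of (\ref{nsw}), so every $\lfloor\widetilde f a\rfloor_1$ (and each $\lfloor\widetilde f x\rfloor_2$ when $|\widetilde f|=1$) is again the leading monomial of a normal $s_0$-polynomial. I would dispatch these cases quickly.

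The single case requiring a new idea is $\overline f=\overline{s_0}$ (i.e.\ $c=\varepsilon$) with $f$ strong, $|\widetilde f|>1$, but $s_0$ \emph{not} strong. By Proposition \ref{pnotstrong} this forces $\overline f=\overline{s_0}=\lfloor xx'\rfloor_2$ and $s_0=\lfloor xx'\rfloor_2+\beta\lfloor xx'\rfloor_1+g'$ with $\beta\neq0$ and $\widetilde{g'}<\lfloor xx'\rfloor$, while strongness of $f$ means $f$ has no $\lfloor xx'\rfloor_1$-term; since the only leading monomial coming from the non-strong $s_0$ is $\overline{s_0}$ itself, $s_0$ alone cannot reach the monomials $\lfloor xx'a\rfloor_1$. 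Here I would pass to the equal composition $(f,s_0)_{\overline f}=f-s_0\in Id(S)$: its $\lfloor xx'\rfloor_2$-terms cancel and the surviving $-\beta\lfloor xx'\rfloor_1$ dominates everything of strictly smaller associative word, so $\overline{f-s_0}=\lfloor xx'\rfloor_1$. Applying Theorem \ref{cd2}(ii) to $f-s_0$ yields a normal $S$-polynomial with leading monomial $\lfloor xx'\rfloor_1$; this polynomial cannot involve $f$, because every normal $f$-polynomial with associative word $\lfloor xx'\rfloor$ is just $f$ (centre $2$), never $\lfloor xx'\rfloor_1$. Thus some $t\in S'$ has $\lfloor xx'\rfloor_1$ as the leading monomial of a normal $t$-polynomial. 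Because $\lfloor xx'\rfloor_1$ has centre $1$, either $\lfloor xx'\rfloor_1=\overline t$ — forcing $\overline t$ to have centre $1$, hence $t$ strong by Proposition \ref{pnotstrong} — or $\lfloor xx'\rfloor_1$ is already a $\dashv$-extension of $\overline t$; in either situation $\widetilde t$ sits inside $\lfloor xx'\rfloor$ along a centre-$1$ branch, so every $\lfloor xx'a\rfloor_1=\lfloor\widetilde f a\rfloor_1$ is again the leading monomial of a normal $t$-polynomial. This closes the claim, and Lemma \ref{remark5.8} finishes the proof.

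The main obstacle is precisely this last case: when the redundant leading monomial $\overline f$ is realised in $S'$ only by a \emph{non-strong} element $s_0$, that element cannot by itself reduce the $\dashv$-extensions of $\overline f$. The device that rescues the argument is to read off a centre-$1$ reducer $t\in S'$ from the leading monomial $\lfloor xx'\rfloor_1$ of the equal composition $f-s_0$, which exists exactly because $S$ is a Gr\"obner--Shirshov basis; everything else is a routine verification that extensions propagate through the branches of (\ref{nsw}).
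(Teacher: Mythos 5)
Your proof is correct and takes essentially the same route as the paper's: reduce via Lemma \ref{remark5.8} and Theorem \ref{cd2}, dispose of the routine cases by extending the normal $s_0$-polynomial, and handle the single delicate case ($f$ strong but $\overline{f}$ realized in $S\setminus\{f\}$ only by a non-strong $s_0$ with $\overline{f}=\overline{s_0}=\lfloor xx'\rfloor_2$) exactly as the paper does, by passing to $f-s_0$, whose leading monomial $\lfloor xx'\rfloor_1$ produces a strong element $t\in S\setminus\{f\}$ whose $\dashv$-extensions cover all the monomials $\lfloor xx'a\rfloor_1$. You even make explicit a detail the paper leaves implicit, namely why the normal $S$-polynomial realizing $\lfloor xx'\rfloor_1$ cannot involve $f$ itself.
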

\begin{proof}Let $S_1=S \backslash \{f\}$. By Lemma \ref{remark5.8}, we only need to show that for any non-zero $h\in Id(S)$
there exists a $g\in S_1$ and~$\overline{h}=\overline{\lfloor gb\rfloor_m}$ for some normal $g$-polynomial $\lfloor gb\rfloor_m$.
Indeed, $\overline{h}=\overline{\lfloor sa\rfloor_m}$ for some normal $S$\!-polynomial $\lfloor sa\rfloor_m$ by Theorem \ref{cd2}.
If $s\neq f$, then we are done.
Suppose that $s=f$ and $\overline{f}=\overline{\lfloor s_1c\rfloor_l}$ for some normal  $S\!_1$\!-polynomial $\lfloor s_1c\rfloor_l$.
When $f$ is not strong or $s_1$ is strong, we conclude that
$$
\overline{h}=\overline{s}=\overline{f}=\overline{\lfloor s_1c\rfloor_2}  \ \ \mbox{or} \ \
\overline{h}=\overline{\lfloor sa\rfloor_m}=\overline{\lfloor fa\rfloor_m}=\overline{\lfloor s_1ca\rfloor_m}
$$
and we have done.
When $f$ is strong and $s_1$ is not strong, by Proposition \ref{pnotstrong}, we have
$$
\overline{f}=\overline{s_1}=\lfloor x_ix_j\rfloor_2 \ \ \mbox{and} \ \  \overline{f-s_1}=\lfloor x_ix_j\rfloor_1
\ \ \mbox{for some} \ \  x_i,x_j\in X.
$$
As $f-s_1\in Id(S)$ and $\overline{f-s_1}<\overline{f}$, $\lfloor x_ix_j\rfloor_1=\overline{f-s_1}=\overline{\lfloor s'a'\rfloor_1}$
some normal  $S\!_1$\!-polynomial $\lfloor s'a'\rfloor_1$ by Theorem \ref{cd2}.
It follows that $s'\in S\!_1$ is strong and
\begin{displaymath}\overline{h}=\overline{\lfloor sa\rfloor_m}=\overline{\lfloor fa\rfloor_m}=
\begin{cases}
\text{ $\lfloor x_ix_j\rfloor_2=\overline{s_1}$}\\
\text{ $\lfloor x_ix_ja\rfloor_1=\overline{\lfloor s'a'a\rfloor_1} $}
\end{cases}
\end{displaymath}
for some normal $s'$-polynomial $\lfloor s'a'a\rfloor_m$. Therefore the result holds.
\end{proof}

\begin{lemma}\label{lmgsb}
Let $S$ be a monic subset of $Di[X]$ and $s\in S$.
Then $s$ has an expression $s=s'+s''$,
where $s',s''\in Di[X], \ supp(s')\subset Irr(S\backslash \{s\}), \ s''\in Id(S\backslash \{s\})$ and
for any $\lfloor a\rfloor_m\in supp(s')$, $\lfloor a\rfloor_m\leq \overline{s}$.
Moreover, if $S$ is a minimal Gr\"{o}bner--Shirshov basis in $Di[X]$, then for any $s\in S$, $\overline{s}=\overline{s'}$ and $s'$ is strong if $s$ is strong.
\end{lemma}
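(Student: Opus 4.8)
The plan is to read off the decomposition from Lemma \ref{l00} and then extract the extra properties from the minimality hypothesis, with the whole argument carried out in the monomial-center order.

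First, for the existence of $s=s'+s''$ I would apply Lemma \ref{l00} \emph{with the monic set $S\backslash\{s\}$ in place of $S$} and with $f=s$ (which is nonzero since $S$ is monic). This gives $s=\sum_i\alpha_i\lfloor u_i\rfloor_{n_i}+\sum_j\beta_j\lfloor s_jb_j\rfloor_{m_j}$, where each $\lfloor u_i\rfloor_{n_i}\in Irr(S\backslash\{s\})$, each $s_j\in S\backslash\{s\}$, $\lfloor b_j\rfloor\in\lfloor X^*\rfloor$, and every monomial occurring is $\leq\overline{s}$. Setting $s':=\sum_i\alpha_i\lfloor u_i\rfloor_{n_i}$ and $s'':=\sum_j\beta_j\lfloor s_jb_j\rfloor_{m_j}$, we get $supp(s')\subseteq Irr(S\backslash\{s\})$ with all monomials $\leq\overline{s}$, and, since each normal $(S\backslash\{s\})$-polynomial $\lfloor s_jb_j\rfloor_{m_j}$ equals $s_j$, $s_j\vdash x$ or $s_j\dashv\lfloor b\rfloor_1$, we have $s''\in Id(S\backslash\{s\})$. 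This settles the first assertion.

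Now write $T:=S\backslash\{s\}$ and assume $S$ is a minimal Gr\"{o}bner--Shirshov basis, so $\overline{s}\in Irr(T)$. To obtain $\overline{s'}=\overline{s}$ I would inspect the reduction used in the proof of Lemma \ref{l00}: because $\overline{s}\in Irr(T)$, the first step records the term $lc(s)\overline{s}=\overline{s}$ in the $s'$-part (recall $s$ is monic) and passes to $r\!_{_s}$; since the process strictly lowers the leading monomial, every later contribution to $s'$ has monomial $<\overline{s}$. Hence $\overline{s}\in supp(s')$ with coefficient $1$, so $\overline{s'}=\overline{s}$ and $lc(s')=1$.

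The real work is preserving strongness, and I expect this to be the main obstacle. Assume $s$ is strong but, for contradiction, $s'$ is not. By Proposition \ref{pnotstrong} we may write $s'=\lfloor xx'\rfloor_2+\alpha_2\lfloor xx'\rfloor_1+g'$ with $0\neq\alpha_2$, $\overline{g'}<\lfloor xx'\rfloor_1$ and $\overline{s'}=\lfloor xx'\rfloor_2$; combined with $\overline{s}=\overline{s'}$ and the strongness of $s$ (so $\widetilde{r\!_{_s}}<\lfloor xx'\rfloor$), this forces $\lfloor xx'\rfloor_1\notin supp(s)$. Computing $s''=s-s'$ then shows that the coefficient of $\lfloor xx'\rfloor_1$ in $s''$ is $-\alpha_2\neq0$, while every other monomial of $s''$ lies strictly below $\lfloor xx'\rfloor_1$ in the monomial-center order, so $\overline{s''}=\lfloor xx'\rfloor_1$. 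Since $s''\in Id(T)\subseteq Id(S)$ and $S$ is a Gr\"{o}bner--Shirshov basis, Theorem \ref{cd2} yields $\lfloor xx'\rfloor_1=\overline{\lfloor s_0b_0\rfloor_{m_0}}$ for some normal $S$-polynomial; Lemma \ref{lorder} excludes $s_0=s$ (that would give $\overline{\lfloor s_0b_0\rfloor_{m_0}}\geq\overline{s}=\lfloor xx'\rfloor_2>\lfloor xx'\rfloor_1$), so $s_0\in T$ and therefore $\lfloor xx'\rfloor_1\notin Irr(T)$. This contradicts $\lfloor xx'\rfloor_1\in supp(s')\subseteq Irr(T)$, so $s'$ must be strong. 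The delicate points are pinning down $\overline{s''}$ exactly (which is where the cancellation of the $\lfloor xx'\rfloor_2$-terms and the inequality $\widetilde{r\!_{_s}}<\lfloor xx'\rfloor$ are used) and ruling out that $s$ itself is the reducer of $\lfloor xx'\rfloor_1$; the rest is routine bookkeeping on the order.
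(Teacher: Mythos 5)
Your proof is correct, and for the hardest part it takes a genuinely different route from the paper. The decomposition $s=s'+s''$ and the equality $\overline{s'}=\overline{s}$ are handled exactly as the paper does: run the reduction from Lemma \ref{l00} against $S\backslash\{s\}$ and use minimality ($\overline{s}\in Irr(S\backslash\{s\})$) to see that the first step deposits $lt(s)=\overline{s}$ into $s'$ while all later contributions stay strictly below it. For preservation of strongness, however, the paper has a two-line order-theoretic argument on that same reduction: after the first step every contribution to $s'$ has monomial at most $\overline{r\!_{_s}}$, so $r\!_{_s}=r\!_{_{s'}}+s''$ with $\overline{r\!_{_{s'}}}\leq\overline{r\!_{_s}}$, whence $\widetilde{s'}=\widetilde{s}>\widetilde{r\!_{_s}}\geq\widetilde{r\!_{_{s'}}}$ and $s'$ is strong — no appeal to the Composition-Diamond lemma at all. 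You instead argue by contradiction: Proposition \ref{pnotstrong} forces a non-strong $s'$ to contain $\lfloor xx'\rfloor_1$ with nonzero coefficient; cancellation of the $\lfloor xx'\rfloor_2$-terms, together with $\lfloor xx'\rfloor_1\notin supp(s)$ and the fact that no monomial lies strictly between $\lfloor xx'\rfloor_1$ and $\lfloor xx'\rfloor_2$, pins down $\overline{s''}=\lfloor xx'\rfloor_1$; then Theorem \ref{cd2} applied to $0\neq s''\in Id(S)$ and Lemma \ref{lorder} (to rule out $s$ itself as the reducer) contradict $supp(s')\subseteq Irr(S\backslash\{s\})$. Every step of this checks out. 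The trade-off: your route invokes the full strength of (i)$\Rightarrow$(ii) of Theorem \ref{cd2}, so it uses the hypothesis that $S$ is a Gr\"{o}bner--Shirshov basis in an essential way, whereas the paper's argument needs only minimality and pure bookkeeping on the monomial-center order; on the other hand, your argument makes explicit the precise combinatorial shape a failure of strongness would take, which the paper's terse inequality chain leaves implicit.
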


\begin{proof} Analysis similar to that in the proof of Lemma \ref{l00} shows the first claim.

If $S$ is a minimal Gr\"{o}bner--Shirshov basis, then we have $\overline{s}=\overline{s'}\in Irr(S\backslash \{s\})$ for any  $s\in S$. Recall $r\!_{_s}:=s-\overline{s}$.
It follows that $r\!_{_s}=r\!_{_{s'}}+s''$ and $\overline{r\!_{_s}}\geq\overline{r\!_{_{s'}}}$. Thus $\widetilde{s'}=\widetilde{s}>\widetilde{r\!_{_s}}\geq \widetilde{r\!_{_{s'}}}$
and the result holds.
\end{proof}

Let $S$ be a subset of $Di[X]$ and $\lfloor a\rfloor_m\in \lfloor X^+ \rfloor_{_1}\cup \lfloor XX \rfloor_{_{2}}$. We set
$$
\overline{S}:=\{\overline{s} \mid s\in S\}, \ \ S^{\lfloor a\rfloor_m}:=\{s\in S\mid \overline{s}=\lfloor a\rfloor_m\}, \ \
S^{<{\lfloor a\rfloor_m}}:=\{s\in S\mid \overline{s}<\lfloor a\rfloor_m\}.
$$

\begin{theorem}\label{trgsb}
Let $I$ be an ideal of $Di[X]$ and  $>$ a monomial-center ordering on $\lfloor X^+ \rfloor_{_1}\cup \lfloor XX \rfloor_{_{2}}$.
Then there exists a unique reduced Gr\"{o}bner--Shirshov basis for $I$.
\end{theorem}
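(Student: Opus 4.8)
For existence, I first invoke the Buchberger--Shirshov algorithm to obtain a monic Gr\"obner--Shirshov basis $G$ for $I$. Say that a monomial is \emph{reachable from} $g$ if it equals $\overline{\lfloor gb\rfloor_m}$ for some normal $g$-polynomial. I then pass to the minimal basis $G_{\min}=\{g\in G:\overline g\text{ is not reachable from any }h\in G\text{ with }\overline h<\overline g\}$; repeated application of Lemma~\ref{lpredu}, together with the fact that $>$ is a well-ordering, shows $G_{\min}$ is again a Gr\"obner--Shirshov basis for $I$, and by Lemma~\ref{lorder} distinct elements of $G_{\min}$ have distinct leading monomials. Finally I rewrite tails: for each $g\in G_{\min}$, Lemma~\ref{lmgsb} yields $g=g'+g''$ with $supp(g')\subseteq Irr(G_{\min}\setminus\{g\})$, $g''\in Id(G_{\min}\setminus\{g\})$, $\overline{g'}=\overline g$, and $g'$ strong iff $g$ is strong. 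Replacing every $g$ by $g'$ simultaneously produces $G_{\mathrm{red}}$. Since each $g'$ has the same leading monomial and the same strongness as $g$, the description~(\ref{nsw}) shows that $G_{\mathrm{red}}\setminus\{g'\}$ and $G_{\min}\setminus\{g\}$ have the same reachable monomials, so $Irr(G_{\mathrm{red}}\setminus\{g'\})=Irr(G_{\min}\setminus\{g\})$ and the reduced condition $supp(g')\subseteq Irr(G_{\mathrm{red}}\setminus\{g'\})$ persists. As each $g'\in I$ and $G_{\mathrm{red}}$ has the same reachable set as $G_{\min}$, Lemma~\ref{remark5.8} shows $G_{\mathrm{red}}$ is a reduced Gr\"obner--Shirshov basis for $I$.

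For uniqueness, let $S,T$ be reduced bases for $I$; each is automatically minimal, and in a minimal basis distinct elements have distinct leading monomials (a common leading monomial would be reachable with $b=\varepsilon$ from the other element, violating minimality). The proof splits into showing $\overline S=\overline T$ and then matching elements. The matching is the easy half: assuming $\overline S=\overline T$, fix $s\in S$ and let $t\in T$ be the unique element with $\overline t=\overline s$. If $s\neq t$, then $0\neq s-t\in I$ has $w:=\overline{s-t}<\overline s$ with $w\in supp(s)\cup supp(t)$. By Theorem~\ref{cd2}, $w=\overline{\lfloor s'd\rfloor_j}$ for some $s'\in S$; since $w<\overline s$, Lemma~\ref{lorder} forces $s'\neq s$, so $w\notin Irr(S\setminus\{s\})$. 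If $w\in supp(s)$ this contradicts $supp(s)\subseteq Irr(S\setminus\{s\})$, and the symmetric argument inside $T$ handles $w\in supp(t)$. Hence $s=t$, and as $s$ was arbitrary, $S=T$ by symmetry.

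The genuinely delicate half is $\overline S=\overline T$, which I prove as $\overline S\subseteq\overline T$ and appeal to symmetry. Fix $s\in S$. As $T$ is a basis, $\overline s=\overline{\lfloor tb\rfloor_m}$ for some $t\in T$, and $\overline t\le\overline s$ by Lemma~\ref{lorder}. If some such $t$ has $\overline t=\overline s$ we are done, so suppose every such $t$ has $\overline t<\overline s$. Because $S$ is a basis, $\overline t=\overline{\lfloor s''c\rfloor_l}$ for some $s''\in S$ with $\overline{s''}\le\overline t<\overline s$, in particular $s''\neq s$. I then want to conclude that $\overline s$ is reachable from $s''$, which would contradict $\overline s\in Irr(S\setminus\{s\})$; this is a transitivity statement, namely that ``$\overline{s''}$ reaches $\overline t$'' and ``$\overline t$ reaches $\overline s$'' imply ``$\overline{s''}$ reaches $\overline s$'', read off from~(\ref{nsw}).

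I expect the main obstacle to be exactly the one case in which this transitivity can fail: $\overline t=\lfloor xx'\rfloor_2$ is reached from $\overline{s''}$, while $\overline s=\lfloor xx'd\rfloor_1$ is reached from $t$ only because $t$ is \emph{strong}; if $s''$ has $\overline{s''}=\lfloor xx'\rfloor_2$ but is \emph{not} strong, then its only reachable monomial is $\lfloor xx'\rfloor_2$, so $\overline s$ is not reachable from $s''$. The plan for this case is to trade strongness for a new element of $I$ via the compositions. By Proposition~\ref{pnotstrong}, a non-strong $s''$ has $lt(s'')=lc(s'')\lfloor xx'\rfloor_2$ together with a genuine $\lfloor xx'\rfloor_1$-term, whereas the strong $t\in T$ (same leading monomial) has $\overline{r\!_{_t}}<\lfloor xx'\rfloor_1$; hence the monic normalization of $s''$ minus $t$ lies in $I$ with leading monomial $\lfloor xx'\rfloor_1$, so $\lfloor xx'\rfloor_1\in\{\overline f:0\neq f\in I\}$. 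Applying Theorem~\ref{cd2} to $\lfloor xx'\rfloor_1$ gives $s^\ast\in S$ reaching $\lfloor xx'\rfloor_1$; but any element reaching the length-two center-$1$ word $\lfloor xx'\rfloor_1$ must have leading monomial in $X$ or of center $1$, hence is strong, and therefore also reaches $\lfloor xx'd\rfloor_1=\overline s$. Since $\overline{s^\ast}\le\lfloor xx'\rfloor_1<\overline s$ gives $s^\ast\neq s$, this contradicts $\overline s\in Irr(S\setminus\{s\})$, so the bad case never arises. Thus every $s\in S$ admits $t\in T$ with $\overline t=\overline s$, giving $\overline S=\overline T$; the strong/non-strong bookkeeping for center-$2$ leading monomials and the verification that the multiplication and special compositions force $\lfloor xx'\rfloor_1$ into the leading-monomial set are the only subtle points, the remainder being routine induction on the well-ordering.
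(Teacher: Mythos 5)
Your uniqueness argument is essentially correct and takes a genuinely different route from the paper: the paper runs a simultaneous induction along the well-ordering, comparing $S_2^{<\lfloor w\rfloor_m}$ with $T^{<\lfloor w\rfloor_m}$, so that the element $r\in T$ reaching $\overline{s}$ is already known (by the induction hypothesis) to lie in $S_2\setminus\{s\}$, which makes any transitivity of reachability unnecessary; you instead prove $\overline{S}=\overline{T}$ head-on, and your repair of the unique transitivity failure (non-strong $s''$ with $\overline{s''}=\lfloor xx'\rfloor_2$ versus strong $t$) by producing $s''-t\in I$ with leading monomial $\lfloor xx'\rfloor_1$ and extracting a strong $s^\ast\in S\setminus\{s\}$ is sound. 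The genuine gap is in your existence argument, at exactly the point where the paper does its hardest work. You assert, citing Lemma \ref{lmgsb}, that the tail-reduced $g'$ is ``strong iff $g$ is strong.'' The lemma gives only one direction ($g$ strong $\Rightarrow$ $g'$ strong), and the converse is false: take $X=\{x,y\}$ and $G_{\min}=\{g,h\}$ with $g=\lfloor xy\rfloor_2+\lfloor xy\rfloor_1$ and $h=\lfloor xy\rfloor_1$. This is a minimal Gr\"{o}bner--Shirshov basis (the multiplication compositions of $g$ equal $2\lfloor hz\rfloor_1$, hence are trivial, and neither leading monomial is reachable from the other element), $g$ is not strong, yet tail reduction gives $g'=g-h=\lfloor xy\rfloor_2$, which is strong. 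Consequently your key claim that $G_{\mathrm{red}}\setminus\{g'\}$ and $G_{\min}\setminus\{g\}$ have the same reachable monomials fails: here $\{g'\}$ reaches every $\lfloor xyb\rfloor_1$ with $\lfloor b\rfloor\in\lfloor X^+\rfloor$, while $\{g\}$ reaches only $\lfloor xy\rfloor_2$, so $Irr(G_{\mathrm{red}}\setminus\{h'\})\subsetneq Irr(G_{\min}\setminus\{h\})$, and the persistence of the reduced condition does not follow from your argument.

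What is true --- and what the paper's proof of Theorem \ref{trgsb} is mostly devoted to --- is the weaker but sufficient statement that no monomial of $supp(s')$ lies in the difference of those irreducible sets: when $t\in G_{\min}$ is not strong but $t'$ is strong, the discarded part $t''=t-t'$ lies in $Id(G_{\min}\setminus\{t\})$ and has leading monomial $\lfloor c\rfloor_1$ (where $\overline{t}=\lfloor c\rfloor_2$), so Theorem \ref{cd2} supplies a strong $f\in G_{\min}\setminus\{t\}$ reaching $\lfloor c\rfloor_1$; hence every monomial $\lfloor cb\rfloor_1$ newly reachable from $t'$ was already reachable from $f$, and one checks $f\neq s$ via $\overline{f}<\lfloor cb\rfloor_1\leq\overline{s}$. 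Your existence proof needs this argument (ironically, it is the same strong/non-strong phenomenon that you handle correctly in the uniqueness half). A secondary defect: your $G_{\min}$ discards only elements whose leading monomial is reachable from an element with \emph{strictly smaller} leading monomial, so two distinct elements sharing a leading monomial would both survive; Lemma \ref{lorder} then does not give distinct leading monomials, and such a $G_{\min}$ need not be minimal in the paper's sense, so Lemma \ref{lmgsb} (which requires minimality) would not even apply. That defect is easily patched by also discarding duplicates, but the strongness gap is not cosmetic.
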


\begin{proof} It is clear that $\{lc(f)^{-1}f\mid 0\neq f\in I\}$ is a Gr\"{o}bner--Shirshov basis for $I$.
Let $S$ be an arbitrary Gr\"{o}bner--Shirshov basis for $I$.
For any $g\in S$, we set
\begin{eqnarray*}
\triangle_g=\{f\in S \mid f\neq g, \ \overline{f}\notin Irr(\{g\}) \},\ \ \ \
S_1=S\backslash \cup_{g\in S}\triangle_g.
\end{eqnarray*}
By Lemma \ref{lpredu}, we conclude that $S_1$ is a minimal Gr\"{o}bner--Shirshov basis for $I$.

For any $s\in S_1$, by Lemma \ref{lmgsb}, we have $s=s'+s''$, where $supp(s')\subset Irr(S_1\backslash \{s\}), \ s''\in Id(S_1\backslash \{s\})$. Let
$$
S_2=\{s' \mid s\in S_1\}.
$$
We  claim that
$S_2$ is a reduced Gr\"{o}bner--Shirshov basis for $I$. Indeed, it is clear that $S_2\subset Id(S_1)$.
For any $f\in Id(S_1)$, by Theorem \ref{cd2} and Lemma \ref{lmgsb}, $\overline{f}=\overline{\lfloor sb\rfloor_m}=\overline{\lfloor s'b\rfloor_m}$
for some normal $S\!_1$-polynomial $\lfloor sb\rfloor_m$ and normal $S\!_2$-polynomial $\lfloor s'b\rfloor_m$. According to  Lemma \ref{remark5.8},
we have $S_2$ is a Gr\"{o}bner--Shirshov basis for $I$.
Suppose there exists $s'\in S_2$
such that $supp(s') \nsubseteq Irr(S_2 \backslash \{s'\})$, i.e. there exists
$\lfloor a\rfloor_m\in supp(s')\subset Irr(S_1\backslash \{s\})$ but $\lfloor a\rfloor_m \notin Irr(S_2 \backslash \{s'\})$.
Then $\lfloor a\rfloor_m=\overline{\lfloor t'b\rfloor_m}$  for some normal $t'$-polynomial $\lfloor t'b\rfloor_m$,
where $\lfloor b\rfloor\in \lfloor X^*\rfloor, t'\in S_2 \backslash \{s'\}$, $t'=t-t''$,
$t\in S_1\backslash \{s\}$ and $t''\in Id(S_1\backslash \{t\})$.
By Lemma \ref{lmgsb}, $\overline{t}=\overline{t'}$. We must have $m=1$, $\lfloor b\rfloor\in \lfloor X^+\rfloor$,
$t'$ is strong and $t$ is not strong. Otherwise
$\lfloor tb\rfloor_m$ is a normal $S_1\backslash \{s\}$-polynomial and $\lfloor a\rfloor_m=\overline{\lfloor t'b\rfloor_m}=\overline{\lfloor tb\rfloor_m}$,
which contradicts our assumption. Then $\overline{t'}=\overline{t}=\lfloor c\rfloor_2$,
 where $\lfloor c\rfloor\in \lfloor X^+\rfloor, |c|=2$. Recall $r\!_{_t}:=t-\overline{t}$.
Hence $\overline{r\!_{_t}}=\lfloor c\rfloor_1=\overline{t''}$. There exist $f\in S_1\backslash \{t\}$ and a normal $f$-polynomial $\lfloor fd\rfloor_1$ such that $\overline{\lfloor fd\rfloor_1}=[c]_1$. It follows that $f$ is strong.
Since $\lfloor a \rfloor_m=\lfloor a \rfloor_1\in supp(s')$, by Lemma \ref{lmgsb}, we have
$\overline{f}<\overline{\lfloor fdb\rfloor_1}=\lfloor cb\rfloor_1=\overline{\lfloor t'b\rfloor}_1=\lfloor a \rfloor_1\leq \overline{s}$.
Hence $f\neq s$ and $\lfloor a\rfloor_1=\overline{\lfloor fdb\rfloor_1}$, where $\lfloor fdb\rfloor_1$ is a normal $S_1\backslash \{s\}$-polynomial.
This contradicts the fact that $\lfloor a\rfloor_1\in Irr(S_1\backslash \{s\})$. We thus get $supp(s') \subseteq Irr(S_2 \backslash \{s'\})$
for all $s'\in S_2$ and our claim holds.

Suppose that $T$ is a reduced Gr\"{o}bner--Shirshov basis for $I$. Let $\overline{s_0}=\min \overline{S_2}$ and
$\overline{r_0}=\min \overline{T}$, where $s_0\in S_2, r_0\in T$. By Theorem \ref{cd2} and Lemma \ref{lorder},
$\overline{s_0}=\overline{\lfloor r'a'\rfloor_p}\geq \overline{r'}\geq \overline{r_0}$ for some $r'\in T$ and
normal $T$-polynomial $\lfloor r'a'\rfloor_p$.
Similarly, $\overline{r_0}\geq \overline{s_0}$. Then $\overline{r_0}=\overline{s_0}$. We claim that $r_0=s_0$.
Otherwise, $0\neq r_0-s_0\in I$. We apply the above argument again, with replace $\overline{s_0}$ by $\overline{r_0-s_0}$,
to obtain that $\overline{r_0}> \overline{r_0-s_0}\geq\overline{r''}\geq \overline{r_0}$ for some $r''\in T$, a contradiction.
As both $T$ and $S_2$ are reduced Gr\"{o}bner--Shirshov bases, we have $S_2^{\overline{s_0}}=\{s_0\}=\{r_0\}=T^{\overline{r_0}}$.
Given any $\lfloor w\rfloor_m\in \overline{S_2}\cup \overline{T}$ with $\lfloor w\rfloor_m> \overline{r_0}$.
Assume that $S_2^{<\lfloor w\rfloor_m}=T^{<\lfloor w\rfloor_m}$.
To prove $T= S_2$, it is sufficient to show that $S_2^{\lfloor w\rfloor_m} \subseteq T^{\lfloor w\rfloor_m}$.
For any $s\in S_2^{\lfloor w\rfloor_m}$, we can see that
$\overline{s}=\overline{\lfloor ra''\rfloor_q}\geq \overline{r}$ for some $r\in T$ and normal $T$-polynomial $\lfloor ra''\rfloor_q$.
Now, we claim that $\lfloor w\rfloor_m=\overline{s}=\overline{r}$. Otherwise, $\lfloor w\rfloor_m=\overline{s}>\overline{r}$.
Then $r\in T^{<\lfloor w\rfloor_m}=S_2^{<\lfloor w\rfloor_m}$ and $r\in S_2\backslash \{s\}$.
But $\overline{s}=\overline{\lfloor ra''\rfloor_q}$, which contradicts the fact that $S_2$ is a reduced Gr\"{o}bner--Shirshov basis.
We next claim that $s=r\in T^{\lfloor w\rfloor_m}$. If $s\neq r$, then $0\neq s-r\in I$.
By Theorem \ref{cd2} and Lemma \ref{lorder}, $\overline{s-r}=\overline{\lfloor r_1u\rfloor_n}=\overline{\lfloor s_1v\rfloor_n}$ for some
normal $T$-polynomial $\lfloor r_1u\rfloor_n$ and normal $S_2$-polynomial $\lfloor s_1v\rfloor_n$ with
$\overline{r_1}, \overline{s_1}\leq \overline{s-r}<\overline{s}=\overline{r}$, where $r_1\in T, s_1\in S_2$.
This means that $s_1\in S_2\backslash \{s\}$ and $r_1\in T\backslash \{r\}$.
Noting that $\overline{s-r}\in supp(s)\cup supp(r)$, we may assume that $\overline{s-r}\in supp(s)$.
As $S_2$ is a reduced Gr\"{o}bner--Shirshov basis, we have $\overline{s-r}\in Irr(S_2\backslash \{s\})$,
which contradicts the fact that $\overline{s-r}=\overline{\lfloor s_1v\rfloor_n}$, where $s_1\in S_2\backslash \{s\}$.
Thus $s=r$. This shows that $S_2^{\lfloor w\rfloor_m} \subseteq T^{\lfloor w\rfloor_m}$.
\end{proof}

\begin{remark}\label{rrgsb}
Theorem \ref{trgsb} together with Buchberger--Shirshov algorithm gives a method to find the unique reduced Gr\"{o}bner--Shirshov basis $S^{red}$ for the ideal $I=Id(S)$, where $S$ is a monic subset of $Di[X]$. One can find $S^{red}$ by the following steps.

\begin{enumerate}
\item[(i)]\ Buchberger--Shirshov algorithm gives a Gr\"{o}bner--Shirshov basis $S_0:=S^{comp}$ for $I$.

\item[(ii)]\  Let
\begin{eqnarray*}
S_1=S_0\backslash \cup_{_{g\in S_0}}\triangle_g,   \ \mbox{where} \
\triangle_g=\{f\in S_0 \mid f\neq g, \ \overline{f}\notin Irr(\{g\})\}.
\end{eqnarray*}

Then $S_1$ is a minimal Gr\"{o}bner--Shirshov basis for $I$.

\item[(iii)]\ For each $s\in S_1$, by the same method as in the proof of Lemma \ref{l00}, $s$ has an expression $s=s'+s''$, where  $supp(s')\subset Irr(S_1\backslash \{s\})$, $s''\in Id(S_1\backslash \{s\})$. Then
$
 S^{red}=\{s' \mid s\in S_1\}
$
is the reduced Gr\"{o}bner--Shirshov basis for $I$.
\end{enumerate}
\end{remark}

\section{Polynomial dialgebra has a finite Gr\"{o}bner--Shirshov basis}

In this section, we use the Hilbert's basis theorem to prove that the polynomial diring $Di_R[X]$ is left Noetherian if $R$ is left Noetherian and $X$ is a finite set.
Furthermore, we show that  each ideal of the polynomial dialgebra $Di[X]$ has a finite Gr\"{o}bner--Shirshov basis, if $X$ is a finite set.

Throughout this section, $R$ is an associative ring with unit.
\begin{definition}
A \emph{diring} is a quaternary $(T,+, \vdash, \dashv)$  such that both
 $(T,+, \vdash)$ and $(T,+,\dashv)$ are associative rings with the identities (\ref{eq00})
in  $T$.
\end{definition}

\begin{definition}
Let $(D, \vdash, \dashv)$ be a disemigroup, $R$ an associative ring with unit and $T$ the free left $R$-module with an $R$-basis $D$. Then
$(T,+, \vdash, \dashv)$ is a diring with a natural way: for any $f=\sum_{i} r_if_i,\ g=\sum_{j} r_j'g_j\in T,\ r_i,r_j'\in R,\ f_i, g_j\in D$,
$$
f\vdash g:=\sum_{i,j} r_ir_j'(f_i\vdash g_j),\ \ \ \ f\dashv g:=\sum_{i,j} r_ir_j'(f_i\dashv g_j).
$$
Such a diring is called a \emph{disemigroup-diring} of $D$ over $R$.

For the free commutative disemigroup $Disgp[X]$ generated by a set $X$,
we denote $Di_R[X]$ the disemigroup-diring of $Disgp[X]$ over $R$
which is also called the  \emph{polynomial diring} over $R$.
In particular, $Di_\mathbf{k}[X]\ (=Di[X])$ is the free commutative dialgebra (\emph{polynomial dialgebra}) generated by $X$ when $\mathbf{k}$ is a field.

A \emph{left} (\emph{right}, resp.) \emph{ideal} $I$ of $Di_R[X]$ is an $R$-submodule of $Di_R[X]$ such that
$f\vdash g, f\dashv g\in I$ $(g\vdash f, g\dashv f \in I$, resp.) for any $f\in Di_R[X]$ and $g\in I$.
We call $I$  an \emph{ideal} of $Di_R[X]$ if $I$ is a left and right ideal.
\end{definition}

Clearly, $Di_R[X]$ is a commutative disemigroup-diring if and only if $R$ is a commutative ring.

Let $U$ be a semigroup, $R$ an associative ring and $RU$ the semigroup ring of $U$ over $R$. Note that $RU$ is a free left $R$-module with an $R$-basis $U$.

Thus, $R\lfloor X^*\rfloor$ is the ring of polynomials over $R$ in indeterminates $X$. It is known that $R\lfloor X^*\rfloor$ is left Noetherian if $R$ is left Noetherian and $X$ is a finite set.

\begin{theorem}
Let $X$ be a finite set. If $R$ is left Noetherian, then so is $Di_R[X]$.
\end{theorem}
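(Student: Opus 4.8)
The plan is to exhibit $Di_R[X]$ as a finitely generated left module over the ordinary polynomial ring $P:=R\lfloor X^*\rfloor$, and then transfer Noetherianity along this module structure. Since $X$ is finite and $R$ is left Noetherian, $P$ is left Noetherian by the (iterated) Hilbert basis theorem recalled just above the statement, and a finitely generated module over a left Noetherian ring is a Noetherian module, i.e.\ it satisfies the ascending chain condition on submodules. The decisive observation will be that every left ideal of the diring $Di_R[X]$ is in particular a $P$-submodule, so the ACC on $P$-submodules forces the ACC on left ideals, which is exactly left Noetherianity of $Di_R[X]$.

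First I would equip $Di_R[X]$ with a left $P$-action. For $\lfloor a\rfloor\in\lfloor X^*\rfloor$ and a basis element $\lfloor u\rfloor_m\in\lfloor X^+\rfloor_1\cup\lfloor XX\rfloor_2$, set $\varepsilon\cdot\lfloor u\rfloor_m:=\lfloor u\rfloor_m$ and, for $\lfloor a\rfloor\in\lfloor X^+\rfloor$, $\lfloor a\rfloor\cdot\lfloor u\rfloor_m:=\lfloor a\rfloor_1\dashv\lfloor u\rfloor_m=\lfloor au\rfloor_1$, extended $R$-bilinearly. That this is a genuine module action reduces to $(\lfloor a\rfloor\lfloor b\rfloor)\cdot z=\lfloor a\rfloor\cdot(\lfloor b\rfloor\cdot z)$, which for nonempty $a,b$ follows from the associativity of $\dashv$ together with $\lfloor a\rfloor_1\dashv\lfloor b\rfloor_1=\lfloor ab\rfloor_1$, and is trivial when $a$ or $b$ equals $\varepsilon$; the $R$-coefficients slide through because $\dashv$ is $R$-bilinear by construction of the disemigroup-diring, so the action is well defined even for noncommutative $R$. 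The point requiring care is the center-$2$ part: the product $x\vdash x'$ of two single letters leaves $\lfloor X^+\rfloor_1$ and lands in $\lfloor XX\rfloor_2$, so the $\dashv$-action of $P$ never reaches the elements $\lfloor xx'\rfloor_2$ from below, and these must be carried along separately.

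Next I would verify finite generation. I claim the finite set
$$
G:=\{x\mid x\in X\}\ \cup\ \{\lfloor xx'\rfloor_2\mid x,x'\in X,\ x\leq x'\}
$$
generates $Di_R[X]$ as a left $P$-module: each $\lfloor u\rfloor_1$ with $|u|=1$ equals $\varepsilon\cdot x$, each $\lfloor u\rfloor_1$ with $|u|\geq2$ equals $\lfloor u'\rfloor\cdot x=\lfloor u'x\rfloor_1$ after splitting off one letter $x$, and each $\lfloor xx'\rfloor_2$ equals $\varepsilon\cdot\lfloor xx'\rfloor_2$; $G$ is finite because $X$ is. Hence $Di_R[X]$ is a finitely generated left module over the left Noetherian ring $P$, and so is a Noetherian $P$-module. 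Finally, if $I$ is a left ideal of $Di_R[X]$, then $I$ is an $R$-submodule closed under $f\dashv g$ for all $f$; taking $f=\lfloor a\rfloor_1$ yields $\lfloor a\rfloor\cdot g\in I$, so $I$ is a $P$-submodule. Every ascending chain of left ideals is thus an ascending chain of $P$-submodules and therefore stabilizes, proving $Di_R[X]$ left Noetherian.

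I expect the main obstacle to be exactly the bookkeeping around $\lfloor XX\rfloor_2$: confirming that the $\dashv$-action is well defined and associative even when it sends a center-$2$ basis element into $\lfloor X^+\rfloor_1$, and checking that appending the finitely many $\lfloor xx'\rfloor_2$ to the single letters really yields a generating set, since those center-$2$ elements lie outside the $P$-span of $X$.
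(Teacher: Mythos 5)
Your proof is correct, and it takes a genuinely different (and tidier) route than the paper's. The paper splits $Di_R[X]$ along the ideal $I$ spanned by $\{\lfloor a\rfloor_1 : |a|\geq 3\}$: it first proves $Di_R[X]/I$ is left Noetherian by hand (the quotient has the finite $R$-basis $\{\lfloor a\rfloor_p : |a|\leq 2\}$, and one argues with the left ideals of leading coefficients in $R$), then observes that $I$, where $\vdash$ and $\dashv$ coincide, is a left ideal of the associative ring $(R\lfloor X^*\rfloor,+,\dashv)$, and finally combines the two stabilized chains $(L_i+I)/I$ and $L_i\cap I$ via the modular law $L_n+(L_{n+1}\cap I)=L_{n+1}\cap(L_n+I)$. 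You instead endow all of $Di_R[X]$ with a single left $P$-module structure, $P=R\lfloor X^*\rfloor$, acting by $\lfloor a\rfloor\cdot z=\lfloor a\rfloor_1\dashv z$, exhibit the finite generating set $X\cup\lfloor XX\rfloor_2$ (your bookkeeping here is right: $P\cdot X$ only spans $\lfloor X^+\rfloor_1$, so the center-$2$ elements must be adjoined, and the action is well defined over noncommutative $R$ because monomials commute with coefficients in both $P$ and $Di_R[X]$), and then transfer the ACC from $P$-submodules to left ideals, which are a subclass of $P$-submodules. Both proofs rest on the same two pillars — the Hilbert basis theorem for $R\lfloor X^*\rfloor$ and the fact that $\dashv$-multiplication by center-$1$ elements is just polynomial multiplication — but your packaging replaces the paper's quotient construction, its ad hoc Noetherianity argument for the finite-dimensional head, and the modular-law chain manipulation with one citation of the standard fact that a finitely generated module over a left Noetherian ring is Noetherian; the paper's version is more self-contained at the cost of length, since it essentially re-proves that module-theoretic fact inside the argument.
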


\begin{proof}
Let $S=\{\lfloor a\rfloor_1\in \lfloor X^+ \rfloor_{_1} \mid |a|\geq 3\}$ and $I=Id(S)$ the ideal of $Di_R[X]$ generated by $S$. We first prove that $Di_R[X]/I$ is left Noetherian.
It suffices to show that any left ideal $H$ of $Di_R[X]/I$ has a finite set of generators. Since $X$ is finite,
$\{\lfloor a\rfloor_p\in \lfloor X^+ \rfloor_{_1}\cup \lfloor XX \rfloor_{_{2}}\mid |a|\leq 2\}$ is finite, say, $\{\lfloor a\rfloor_p\in \lfloor X^+ \rfloor_{_1}\cup \lfloor XX \rfloor_{_{2}}\mid |a|\leq 2\}=\{\lfloor a_1\rfloor_{p_1},\dots,\lfloor a_m\rfloor_{p_m}\}$ with
$\lfloor a_1\rfloor_{p_1}<\dots<\lfloor a_m\rfloor_{p_m}$, where $<$ is the deg-lex-center ordering  on $\lfloor X^+ \rfloor_{_1}\cup \lfloor XX \rfloor_{_{2}}$.
Then $Di_R[X]/I=R\lfloor a_1\rfloor_{p_1}+R\lfloor a_2\rfloor_{p_2}+\cdots+R\lfloor a_m\rfloor_{p_m}$.
For $j=1,2,\dots,m$, let $I_j$ be the set of elements $\alpha_j\in R$ such that there exists an element of the form
$$
f_j=\alpha_j\lfloor a_j\rfloor_{p_j}+\beta_{j-1}\lfloor a_{j-1}\rfloor_{p_{j-1}}+\cdots +\beta_1\lfloor a_1\rfloor_{p_1}\in H, \ \
\beta_1,\cdots,\beta_{j-1}\in R.
$$
It is easy to check that $I_j$ is a left ideal of $R$.
Hence each $I_j$ has a finite set of generators and
we may assume that
$I_j$ is generated by $\alpha_{j1}, \cdots, \alpha_{j{n_j}}$, where $n_j \in \mathbb{Z}^+$. Then for $1\leq i \leq n_j$,
we have polynomials $f_{ji}=\alpha_{ji}\lfloor a_j\rfloor_{p_j}+h_{ji}\in H$ where $\overline{h_{ji}}<\lfloor a_j\rfloor_{p_j}$.
Thus we obtain a finite set $T:=\{f_{ji}\in H \mid 1\leq j\leq m, 1\leq i \leq n_j\}$.

Now we claim that the set $T$ generates $H$. For any nonzero $f\in H$, we prove $f$ belongs to the left ideal of $Di_R[X]/I$ generated by $T$, by induction on $\overline{f}$. Assume that $f=\alpha \lfloor a_k\rfloor_{p_k}+f_1$, where $1\leq k\leq m$, $0\neq\alpha\in R$
and $\overline{f_1}<\lfloor a_k\rfloor_{p_k}$.
Thus $\alpha\in I_k$ and $\alpha=\Sigma_ir_i\alpha_{ki}$, where $r_i\in R$.
Clearly, $\Sigma_ir_if_{ki}\in H$ and $lt(\Sigma_ir_if_{ki})=\alpha \lfloor a_k\rfloor_{p_k}=lt(f)$.
It follows that $f_1=f-\Sigma_ir_if_{ki}\in H$ and $\overline{f_1}<\overline{f}=\lfloor a_k\rfloor_{p_k}$.
If $\lfloor a_k\rfloor_{p_k}=\min \{\overline{g}\mid 0\neq g\in H\}$,
then $f_1=0$ and $f=\Sigma_ir_if_{ki}$. Hence our assertion holds.
Suppose that $\lfloor a_k\rfloor_{p_k}>\min \{\overline{g}\mid g\in H\}$. By induction on $\overline{f}$, we obtain the assertion.

Now, for any ascending chain of left ideals of $Di_R[X]$
\begin{eqnarray*}\label{ch0}
L_1\subseteq  L_2\subseteq \cdots\subseteq L_i\subseteq \cdots,
\end{eqnarray*}
we have an ascending chain of left ideals of $Di_R[X]/I$
\begin{eqnarray*}
(L_1+I)/I\subseteq  (L_2+I)/I\subseteq \cdots\subseteq (L_i+I)/I\subseteq \cdots.
\end{eqnarray*}
Since $Di_R[X]/I$ is left Noetherian, it follows that there is a $p\in \mathbb{Z}^+$ such that $(L_p+I)/I=(L_{p+1}+I)/I=\cdots$. Therefore
$L_p+I=L_{p+1}+I=\cdots$. On the other hand, note that for any $f\in Di_R[X],\ h\in I$, we have $f\vdash h=f\dashv h$, in particular, in $I,\ ``\vdash"=``\dashv"$.
Thus, $I$ is also a left ideal of the associative ring $(R\lfloor X^*\rfloor,+,\dashv)$.
Then for the ascending chain of left ideals of $R\lfloor X^*\rfloor$
\begin{eqnarray*}
L_1 \cap I\subseteq  L_2\cap I \subseteq \cdots\subseteq L_i\cap I\subseteq \cdots,
\end{eqnarray*}
since $(R\lfloor X^*\rfloor,+,\dashv)$ is left Noetherian, there is an $l\in \mathbb{Z}^+$ such that $L_l \cap I=L_{l+1}\cap I=\cdots$.
Take $n=\max\{p,l\}$. We thus get
$
L_n=L_n+(L_n\cap I)=L_n+(L_{n+1}\cap I)=L_{n+1}\cap(L_n+ I)=L_{n+1}\cap(L_{n+1}+ I)=L_{n+1}=\dots.
$
This shows that $Di_R[X]$ is left Noetherian.
\end{proof}

\begin{corollary}\label{cn}
The polynomial dialgebra $Di[X]$ is Noetherian, if $X$ is a finite set.
\end{corollary}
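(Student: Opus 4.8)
The plan is to deduce this immediately from the preceding theorem by specializing the coefficient ring to the base field. First I would take $R=\mathbf{k}$. Since $\mathbf{k}$ is a field, its only left ideals are $0$ and $\mathbf{k}$ itself, so $\mathbf{k}$ is trivially left Noetherian; moreover, by the definition of the polynomial diring we already have the identification $Di_{\mathbf{k}}[X]=Di[X]$. Thus the hypotheses of the preceding theorem are satisfied with $R=\mathbf{k}$, and its conclusion yields at once that $Di[X]$ is left Noetherian whenever $X$ is finite.

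The only remaining point is to upgrade ``left Noetherian'' to the unqualified ``Noetherian'' asserted in the statement. Here I would invoke commutativity: because $Di[X]$ is a \emph{commutative} dialgebra, both operations are commutative, so $f\vdash g=g\vdash f$ and $f\dashv g=g\dashv f$ for all $f,g\in Di[X]$. Comparing this with the definition of left and right ideals shows that every left ideal is automatically a right ideal; hence the three notions of left ideal, right ideal, and two-sided ideal all coincide in $Di[X]$. Consequently an ascending chain of ideals is literally the same datum as an ascending chain of left ideals, and the ascending chain condition for left ideals just obtained gives the ascending chain condition for ideals. This establishes that $Di[X]$ is Noetherian.

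I do not anticipate any genuine obstacle, since all the real work is already carried out in the preceding theorem. The proof reduces to two routine bookkeeping observations, namely that a field is Noetherian and that $Di_{\mathbf{k}}[X]=Di[X]$, together with the single structural remark that commutativity of $\vdash$ and $\dashv$ collapses the left/right distinction for ideals. If anything required care, it would be merely to confirm that this collapse holds at the level of arbitrary elements (not just generators), which is immediate from bilinearity of the two operations.
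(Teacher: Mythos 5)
Your proposal is correct and matches the paper's (implicit) argument: the corollary is stated without proof precisely because it follows by taking $R=\mathbf{k}$ in the preceding theorem, using that a field is left Noetherian and that $Di_{\mathbf{k}}[X]=Di[X]$ by definition. Your additional remark that commutativity of $\vdash$ and $\dashv$ makes left, right, and two-sided ideals coincide is the right way to pass from ``left Noetherian'' to ``Noetherian,'' and is exactly what the paper leaves tacit.
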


\begin{theorem}\label{thfgsb}
Let $X$ be a finite set and $I$ an ideal of $Di[X]$. Then for any monomial-center ordering on $\lfloor X^+ \rfloor_{_1}\cup \lfloor XX \rfloor_{_{2}}$, $I$ has a finite Gr\"{o}bner--Shirshov basis.
\end{theorem}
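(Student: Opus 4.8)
The plan is to combine the existence of a unique reduced Gr\"obner--Shirshov basis (Theorem \ref{trgsb}) with a Dickson-type finiteness argument applied to the associative words of the leading monomials. Let $S$ be the reduced Gr\"obner--Shirshov basis for $I$ furnished by Theorem \ref{trgsb}. Since a reduced basis is in particular minimal, the map $s\mapsto \overline{s}$ is injective on $S$, i.e. distinct elements of $S$ have distinct leading monomials; hence it suffices to bound the size of $\overline{S}$. First I would split $S$ according to the center index of the leading monomial, writing $S_2=\{s\in S\mid \overline{s}\in \lfloor XX \rfloor_{_{2}}\}$ and $S_1=\{s\in S\mid \overline{s}\in \lfloor X^+ \rfloor_{_1}\}$, so that $S=S_1\cup S_2$.

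The set $S_2$ is immediately finite: because $X$ is finite, $\lfloor XX \rfloor_{_{2}}=\{\lfloor xx'\rfloor_2\mid x,x'\in X,\ x\leq x'\}$ is finite, and injectivity of $s\mapsto\overline{s}$ gives $|S_2|\leq |\lfloor XX \rfloor_{_{2}}|$. The heart of the matter is the finiteness of $S_1$, and here I would first record a strongness observation drawn from Proposition \ref{pnotstrong}: if $s$ is not strong then $\overline{s}=\lfloor xx'\rfloor_2$ has center index $2$; contrapositively, every $s\in S_1$ is \emph{strong}. Next, consider the associative words $\{\widetilde{s}\mid s\in S_1\}\subseteq \lfloor X^+\rfloor$. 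Since $X$ is finite, $\lfloor X^+\rfloor$ is the free commutative semigroup on finitely many generators, so by Dickson's lemma (equivalently, by the Noetherianity underlying Corollary \ref{cn}) the subset $\{\widetilde{s}\mid s\in S_1\}$ has only finitely many elements minimal under divisibility. Let $F\subseteq S_1$ be the finite set of those $s$ whose $\widetilde{s}$ is minimal; this is well defined because $s\mapsto\widetilde{s}$ is injective on $S_1$ (on $S_1$ the leading monomial is determined by $\widetilde{s}$, so the claim reduces to injectivity of $s\mapsto\overline{s}$).

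It then remains to prove $S_1=F$. Suppose $s\in S_1\setminus F$. Then $\widetilde{s}=\lfloor \widetilde{f}a\rfloor$ for some $f\in F$ and $\lfloor a\rfloor\in \lfloor X^*\rfloor$; here $f\neq s$, and $\widetilde{s}\neq\widetilde{f}$ (otherwise the two leading monomials, both of index $1$, coincide, forcing $s=f$), so $\lfloor a\rfloor\in \lfloor X^+\rfloor$. Since $f\in S_1$ is strong, $\lfloor fa\rfloor_1=f\dashv\lfloor a\rfloor_1$ is a normal $f$-polynomial with $\overline{\lfloor fa\rfloor_1}=\lfloor \widetilde{f}a\rfloor_1=\lfloor \widetilde{s}\rfloor_1=\overline{s}$. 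As $f\in S\setminus\{s\}$, this exhibits $\overline{s}$ as the leading monomial of a normal $(S\setminus\{s\})$-polynomial, so $\overline{s}\notin Irr(S\setminus\{s\})$, contradicting the minimality of $S$. Hence $S_1=F$ is finite, and therefore $S=S_1\cup S_2$ is finite. I expect the main obstacle to be not the Dickson step itself but the bookkeeping of the two center indices: specifically, establishing that a subscript-$1$ leading monomial forces strongness, which is exactly what licenses the multiplication $f\dashv\lfloor a\rfloor_1$ as a legitimate normal polynomial and thereby drives the reduction that produces the contradiction.
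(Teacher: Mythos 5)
Your proof is correct, and its skeleton matches the paper's: both start from the minimal (in your case reduced) basis given by Theorem \ref{trgsb}, both handle the index-$2$ leading monomials via finiteness of $\lfloor XX \rfloor_{_{2}}$ plus injectivity of $s\mapsto\overline{s}$ on a minimal basis, and both run on the same key mechanism --- if $f$ is strong and $\widetilde{f}$ properly divides $\widetilde{s}$, then $f\dashv\lfloor a\rfloor_1$ is a normal $f$-polynomial with leading monomial $\lfloor\widetilde{f}a\rfloor_1=\overline{s}$ (Lemma \ref{lorder}), contradicting $\overline{s}\in Irr(S\backslash\{s\})$. The genuine difference is the finiteness engine and the shape of the argument. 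The paper proceeds by contradiction: an infinite minimal basis would contain infinitely many strong elements with pairwise distinct leading monomials, and the ideals $I_n=Id(\{\overline{s_1},\dots,\overline{s_n}\})$ of $Di[X]$ would then form a strictly ascending chain (strictness being exactly the mechanism above), contradicting Noetherianity of $Di[X]$ (Corollary \ref{cn}), which the paper obtains from Hilbert's basis theorem. You instead invoke Dickson's lemma for the free commutative monoid on the finite set $X$ and conclude positively that $S_1$ coincides with the finite set of elements whose associative word is divisibility-minimal. The two finiteness principles are of course cousins --- Dickson's lemma is precisely what gives the ascending chain condition for monomial ideals --- but your route is more elementary and self-contained: it bypasses the diring Noetherianity theorem of Section 4 entirely, needing only Theorem \ref{trgsb}, Proposition \ref{pnotstrong} and Lemma \ref{lorder}, and it yields slightly sharper information, identifying $S_1$ exactly rather than merely bounding it. What the paper's route buys in exchange is economy within the article: Noetherianity of $Di[X]$ is machinery it has already built and reuses elsewhere (for instance in the termination proof of Algorithm \ref{alg1}), so the chain argument there comes essentially for free.
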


\begin{proof}
By Theorem \ref{trgsb}, $I$ has a minimal Gr\"{o}bner--Shirshov basis $S$.
The leading monomial of a non-strong polynomial $f\in S$ is $\lfloor x_ix_j\rfloor_2$ for some $x_i,x_j\in X$ by Proposition \ref{pnotstrong}.
Since $X$ is finite, it follows that the set $\{\lfloor x_ix_j\rfloor_2\mid x_i,x_j\in X\}$ is finite.
As $S$ is a minimal Gr\"{o}bner--Shirshov basis, we conclude that the set $\{s\in S\mid s \ \mbox{is not strong} \}$ is finite.
If $S$ is infinite, then there exists an infinite sequence $s_1, s_2,\dots$, where each $s_i\in S$ is strong and $\overline{s_i} \neq \overline{s_j}$ if $i\neq j$.
For $ n=1,2,\dots$,
let $I_n$ be the ideal of $Di[X]$ generated by $T_n=\{\overline{s_1},\dots,\overline{s_n}\}$. Then $\overline{s_{n+1}}\not\in I_n$. Otherwise, $\overline{s_{n+1}}$ has an expression
$$
\overline{s_{n+1}}=\sum_{i=1}^n (\overline{s_i}\dashv f_i+ \overline{s_i}\vdash h_i +\alpha_i\overline{s_i}).
$$
where $f_i,h_i\in Di[X],\ \alpha_i\in \mathbf{k}$. This implies that either $\overline{s_{n+1}}=\overline{s_{i}}\vdash x$
for some $\overline{s_i} \in T_n$ with  $|\widetilde{s_i}|=1$ and $x\in X$, or $\overline{s_{n+1}}=\overline{s_{i}}\dashv \lfloor a\rfloor_1$ for some $\overline{s_i} \in T_n$ and
$\lfloor a\rfloor \in \lfloor X^+\rfloor$.
Because each $s_i$ is strong, we can conclude that $s_{n+1}=\overline{s_{i}}\vdash x= \overline{s_i\vdash x}$
or $\overline{s_{n+1}}=\overline{s_{i}}\dashv \lfloor a\rfloor_1=\overline{s_i\dashv \lfloor a\rfloor_1}$.
It follows that
$\overline{s_{n+1}}=\overline{\lfloor s_ib\rfloor_m}$ for some normal $s_i$-polynomial $\lfloor s_ib\rfloor_m$, which contradicts the fact that $S$ is a minimal Gr\"{o}bner--Shirshov basis.
Then we have an infinite sequence $I_1\subset I_2\subset\dots$ which contradicts that $Di[X]$ is Noetherian.
\end{proof}

\begin{remark}
Under the conditions in Theorem \ref{thfgsb}, in $Di[X]$, every minimal Gr\"{o}bner--Shirshov basis is finite. In particular, every reduced Gr\"{o}bner--Shirshov basis is finite.
\end{remark}

By Theorem \ref{thfgsb}, we immediately have

\begin{corollary}
If $X$ is a finite set, then each congruence on the commutative disemigroup $Disgp[X]$ is finitely generated.
\end{corollary}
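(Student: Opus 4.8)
The plan is to translate finite generation of a congruence into finite generation of an ideal through the disemigroup--dialgebra correspondence, and then invoke Theorem~\ref{thfgsb}. First I would fix a congruence $\theta$ on $Disgp[X]$ and let $\pi_\theta\colon Di[X]\to \mathbf{k}[Disgp[X]/\theta]$ be the natural surjective homomorphism of commutative dialgebras sending each basis element to its $\theta$--class, where $\mathbf{k}[Disgp[X]/\theta]$ is the dialgebra of the quotient disemigroup $Disgp[X]/\theta$ over $\mathbf{k}$. Put $I_\theta=\ker\pi_\theta$, an ideal of $Di[X]$. Since $\pi_\theta(u-v)=0$ for basis elements $u,v$ precisely when $(u,v)\in\theta$, grouping the terms of an element of $\ker\pi_\theta$ by $\theta$--class (the coefficients within each class summing to zero) shows that $I_\theta=\mathrm{span}_{\mathbf{k}}\{u-v\mid u,v\in Disgp[X],\ (u,v)\in\theta\}$ and that $\theta=\{(u,v)\mid u-v\in I_\theta\}$; in particular $\theta\mapsto I_\theta$ is injective.

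Next, because $X$ is finite, Theorem~\ref{thfgsb} supplies a finite Gr\"{o}bner--Shirshov basis of $I_\theta$, which in particular generates $I_\theta$ as an ideal (finite generation also follows from Corollary~\ref{cn}). Each of its finitely many elements is a finite $\mathbf{k}$--linear combination of differences $u-v$ with $(u,v)\in\theta$; collecting all the pairs occurring in these combinations yields a finite set $R=\{(u_1,v_1),\dots,(u_n,v_n)\}\subseteq\theta$ with $Id(\{u_1-v_1,\dots,u_n-v_n\})=I_\theta$, since the $u_j-v_j$ already span a generating set and each one lies in $I_\theta$.

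Then I would verify that $R$ generates $\theta$ as a congruence. Let $\theta'$ be the congruence on $Disgp[X]$ generated by $R$, so that $\theta'\subseteq\theta$. The key point is to show $I_{\theta'}=Id(\{u_1-v_1,\dots,u_n-v_n\})$. For the inclusion $\supseteq$, note that in $Disgp[X]$ a product of basis elements is again a basis element, so each ideal generator of the form $(u_j-v_j)\vdash\lfloor a\rfloor_m$ or $(u_j-v_j)\dashv\lfloor a\rfloor_m$ equals a difference $(u_j\vdash\lfloor a\rfloor_m)-(v_j\vdash\lfloor a\rfloor_m)$ of $\theta'$--related basis elements and hence lies in $I_{\theta'}$. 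For $\subseteq$, every step in building $\theta'$ (symmetry, transitivity, compatibility with $\vdash$ and $\dashv$) mirrors an ideal manipulation, for instance transitivity corresponds to $(a-c)=(a-b)+(b-c)$ and compatibility to $(a-b)\dashv w=(a\dashv w)-(b\dashv w)$. Combining this with the previous paragraph gives $I_{\theta'}=I_\theta$, and injectivity of $\theta\mapsto I_\theta$ forces $\theta'=\theta$, so $\theta$ is generated by the finite set $R$.

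The main obstacle I anticipate is precisely the identity $I_{\theta'}=Id(\{u_1-v_1,\dots,u_n-v_n\})$: one must be sure that forming the ideal generated by the differences captures exactly the congruence generated by the pairs, with no spurious elements appearing. This rests on the multiplicative closure of the basis $\lfloor X^+ \rfloor_{_1}\cup \lfloor XX \rfloor_{_{2}}$ under $\vdash$ and $\dashv$, namely that products of basis elements are basis elements with coefficient $1$, which guarantees that every product of a difference $u_j-v_j$ with basis elements remains a difference of $\theta'$--congruent basis elements rather than an uncontrolled linear combination.
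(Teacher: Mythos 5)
Your proposal is correct, but it takes a genuinely different route from the paper's proof. The paper argues by contradiction: a congruence that is not finitely generated would yield a strictly increasing chain of congruences $\rho_1\subsetneq\rho_2\subsetneq\cdots$; to each $\rho_i$ it attaches the set $S_i$ of differences $\lfloor u\rfloor_m-\lfloor v\rfloor_n$ of $\rho_i$-related monomials, observes that each $S_i$ is a Gr\"{o}bner--Shirshov basis, and then --- this is its main technical step --- uses Theorem \ref{cd2} together with an induction on leading monomials to show that a difference lying in $Id(S_i)$ must already lie in $S_i$, so that the ideals $Id(S_i)$ also strictly increase, contradicting Corollary \ref{cn}. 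You argue directly instead: realizing the congruence-to-ideal correspondence as $\theta\mapsto I_\theta=\ker\pi_\theta$ makes the crucial faithfulness statement ($u-v\in I_\theta$ if and only if $(u,v)\in\theta$) an elementary consequence of grouping coefficients by $\theta$-classes, with no appeal to the Composition-Diamond lemma; finite generation of $I_\theta$ (Theorem \ref{thfgsb}, or just Corollary \ref{cn}) then yields finitely many pairs, and bilinearity together with the multiplicative closure of the monomial basis $\lfloor X^+ \rfloor_{_1}\cup \lfloor XX \rfloor_{_{2}}$ shows that the congruence these pairs generate induces the same ideal, hence equals $\theta$. What your route buys is elementarity and constructiveness: it is essentially R\'edei's classical argument for commutative semigroups transplanted to disemigroups, it bypasses the Gr\"{o}bner--Shirshov normal-form machinery at the key step, and it extracts an explicit finite set of generating pairs rather than deriving a contradiction. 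What the paper's route buys is that it stays entirely within its own formalism, reusing Theorem \ref{cd2}, and records along the way the useful observation that the difference set of any congruence is itself a Gr\"{o}bner--Shirshov basis in $Di[X]$. Both proofs ultimately rest on the Noetherianity of $Di[X]$.
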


\begin{proof}
Suppose that $\rho$ is a congruence on $Disgp[X]$ and $\rho$ is not finitely generated.
Then we can obtain a strictly increasing infinite chain of congruences on $Disgp[X]$
\begin{eqnarray*}
\rho_1\subsetneq \rho_2\subsetneq\cdots \subsetneq\rho_i \subsetneq \cdots.
\end{eqnarray*}
Let $S_i=\{\lfloor u\rfloor_m-\lfloor v\rfloor_n \mid (\lfloor u\rfloor_m,\lfloor v\rfloor_n)\in \rho_i,\lfloor u\rfloor_m>\lfloor v\rfloor_n\}, i=1,2,\cdots$.
It is easy to check that each $S_i$ is a Gr\"{o}bner--Shirshov basis in $Di[X]$ and
\begin{eqnarray*}
S_1\subsetneq S_2\subsetneq\cdots \subsetneq S_i \subsetneq \cdots.
\end{eqnarray*}
Assume that $\lfloor u\rfloor_m-\lfloor v\rfloor_n\in S_{i+1}\subseteq Id(S_{i+1})$ and $\lfloor u\rfloor_m-\lfloor v\rfloor_n\notin S_i$.
We claim that $\lfloor u\rfloor_m-\lfloor v\rfloor_n\notin Id(S_i)$. Otherwise, by Theorem \ref{cd2}, we may assume that
\begin{eqnarray*}
\lfloor u\rfloor_m-\lfloor v\rfloor_n=\sum_{j=1}^l\alpha_j(\lfloor u_j\rfloor_{m_j}-\lfloor v_j\rfloor_{n_j}),
\end{eqnarray*}
where each $\alpha_j\in \mathbf{k}, \ \lfloor u_j\rfloor_{m_j}-\lfloor v_j\rfloor_{n_j}\in S_i$ and
$\lfloor u_1\rfloor_{m_1}>\cdots >\lfloor u_l\rfloor_{m_l}$. This implies that $\alpha_1=1, \lfloor u\rfloor_m=\lfloor u_1\rfloor_{m_1}$.
Now we prove that $\lfloor u\rfloor_m-\lfloor v\rfloor_n\in S_i$ by induction on $\lfloor u\rfloor_m$.
If $\lfloor u\rfloor_{m}=\min\{\overline{s}\mid s\in Id(S_i)\}$,
then $\lfloor u\rfloor_m-\lfloor v\rfloor_n=\lfloor u_1\rfloor_{m_1}-\lfloor v_1\rfloor_{n_1}\in S_i$.
Otherwise, we can assume that $\lfloor v_1\rfloor_{n_1}-\lfloor v\rfloor_n=\lfloor u\rfloor_m-\lfloor v\rfloor_n-(\lfloor u_1\rfloor_{m_1}-\lfloor v_1\rfloor_{n_1})\neq 0$ and $\lfloor v_1\rfloor_{n_1}>\lfloor v\rfloor_n$. Then
\begin{eqnarray*}
\lfloor v_1\rfloor_{n_1}-\lfloor v\rfloor_n=\sum_{j=2}^l\alpha_j(\lfloor u_j\rfloor_{m_j}-\lfloor v_j\rfloor_{n_j}).
\end{eqnarray*}
By induction on $\lfloor u\rfloor_m$, $\lfloor v_1\rfloor_{n_1}-\lfloor v\rfloor_n\in S_i$.
It follows that $(\lfloor v_1\rfloor_{n_1},\lfloor v\rfloor_n)\in \rho_i$. Since $(\lfloor u_1\rfloor_{m_1},\lfloor v_1\rfloor_{n_1})\in \rho_i$,
we have $(\lfloor u\rfloor_{m},\lfloor v\rfloor_n)=(\lfloor u_1\rfloor_{m_1},\lfloor v\rfloor_n)\in \rho_i$
and $\lfloor u\rfloor_{m}-\lfloor v\rfloor_n\in S_i$. This contradicts our assumption that $\lfloor u\rfloor_m-\lfloor v\rfloor_n\notin S_i$.
Therefore $Id(S_i)\subsetneq Id(S_{i+1}), i=1,2,\cdots$, and we can obtain a infinite
properly ascending chain of ideals, which is contradicts the fact that $Di[X]$ is Noetherian.
\end{proof}

\section{Applications}

\subsection{Normal forms of commutative disemigroups}

Recall that $Disgp[ X]=\lfloor X^+ \rfloor_{_1}\cup \lfloor XX \rfloor_{_{2}}$ is the free commutative disemigroup generated by a set $X$. Then
for an arbitrary commutative disemigroup $D$, $D$ has an expression
$$
D=Disgp[ X| S]:=Disgp[ X]/\rho(S)
$$
for some set $X$ and $S\subseteq Disgp[ X]\times Disgp[ X]$, where $\rho(S)$ is the congruence of $Disgp[ X]$ generated by $S$.

It is natural to ask how to find normal forms of elements of the commutative disemigroup  $D=Disgp [X|S]$?

Let $>$ be a monomial-center ordering on $Disgp[ X]$ and
$S=\{(\lfloor u_i\rfloor_{m_i},\lfloor v_i\rfloor_{n_i})\mid \lfloor u_i\rfloor_{m_i}>\lfloor v_i\rfloor_{n_i},\ i\in I\}$.
Consider the commutative dialgebra $Di[X| S]$, where $S=\{\lfloor u_i\rfloor_{m_i}-\lfloor v_i\rfloor_{n_i}\mid  i\in I\}$.
By  Buchberger--Shirshov algorithm, we have a Gr\"{o}bner--Shirshov basis $S^{comp}$ in $Di[X]$ and $Id(S^{comp})=Id(S)$. It is clear that each element in $S^{comp}$ is of the form $\lfloor u\rfloor_m-\lfloor v\rfloor_n$, where $\lfloor u\rfloor_m>\lfloor v\rfloor_n,\
\lfloor u\rfloor_m, \lfloor v\rfloor_n\in Disgp[ X]$.

The following theorem is an application of Theorem \ref{cd2}.
\begin{theorem}\label{tdsg}
Let $>$ be a monomial-center ordering on $Disgp[ X]$ and $D=Disgp[X| S]$,
where $S=\{(\lfloor u_i\rfloor_{m_i},\lfloor v_i\rfloor_{n_i})\mid \lfloor u_i\rfloor_{m_i}>\lfloor v_i\rfloor_{n_i},\ i\in I\}$.
Then the set $Irr(S^{comp})$ is a set of normal forms of elements of  the commutative disemigroup  $D=Disgp[X| S]$, where $S^{comp}$ is a Gr\"{o}bner--Shirshov basis in $Di[X]$ obtained from $S$ by Buchberger--Shirshov algorithm.
\end{theorem}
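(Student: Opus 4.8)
The plan is to reduce the statement to a single dictionary between the disemigroup congruence $\rho(S)$ and the dialgebra ideal $Id(S)$, namely the equivalence
\[
(\lfloor a\rfloor_m,\lfloor b\rfloor_n)\in\rho(S)\ \Longleftrightarrow\ \lfloor a\rfloor_m-\lfloor b\rfloor_n\in Id(S),\qquad(\star)
\]
valid for all $\lfloor a\rfloor_m,\lfloor b\rfloor_n\in Disgp[X]$. Granting $(\star)$, the theorem follows quickly: writing $\pi\colon Disgp[X]\to D$ for the quotient map, it suffices to show that the restriction of $\pi$ to $Irr(S^{comp})$ is a bijection onto $D$, and both injectivity and surjectivity will be read off from $(\star)$ together with Theorem \ref{cd2}.

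For the forward implication of $(\star)$ I would argue entirely inside $Di[X]$. Define a relation $\theta$ on $Disgp[X]$ by declaring $(\lfloor a\rfloor_m,\lfloor b\rfloor_n)\in\theta$ iff $\lfloor a\rfloor_m-\lfloor b\rfloor_n\in Id(S)$. Since $Id(S)$ is a $\mathbf{k}$-subspace, $\theta$ is an equivalence relation; since $Id(S)$ is an ideal and the product of two monomials of $Disgp[X]$ is again a single monomial, the elements $(\lfloor a\rfloor_m-\lfloor b\rfloor_n)\vdash\lfloor c\rfloor_k$ and $(\lfloor a\rfloor_m-\lfloor b\rfloor_n)\dashv\lfloor c\rfloor_k$ lie in $Id(S)$, so $\theta$ is compatible with $\vdash$ and $\dashv$; and $\theta$ contains every generating pair of $S$. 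Thus $\theta$ is a congruence containing $S$, whence $\rho(S)\subseteq\theta$, which is exactly the forward implication.

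The reverse implication is the heart of the matter, and here I would pass to the commutative disemigroup-dialgebra $\mathbf{k}D$ of $D=Disgp[X|S]$ over $\mathbf{k}$ (the Section~4 construction with $R=\mathbf{k}$), which by definition has the set $D$ of congruence classes as a $\mathbf{k}$-basis. By freeness of $Di[X]$, the assignment $x\mapsto[x]$ for $x\in X$ extends to a dialgebra homomorphism $\Phi\colon Di[X]\to\mathbf{k}D$ with $\Phi(\lfloor w\rfloor_p)=[\lfloor w\rfloor_p]$ on every monomial. Each generator $\lfloor u_i\rfloor_{m_i}-\lfloor v_i\rfloor_{n_i}$ of $S$ is sent to $[\lfloor u_i\rfloor_{m_i}]-[\lfloor v_i\rfloor_{n_i}]=0$, so $Id(S)\subseteq\ker\Phi$. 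Consequently, if $\lfloor a\rfloor_m-\lfloor b\rfloor_n\in Id(S)$ then $[\lfloor a\rfloor_m]-[\lfloor b\rfloor_n]=\Phi(\lfloor a\rfloor_m-\lfloor b\rfloor_n)=0$, and since the elements of $D$ are linearly independent in $\mathbf{k}D$ this forces $[\lfloor a\rfloor_m]=[\lfloor b\rfloor_n]$, i.e.\ $(\lfloor a\rfloor_m,\lfloor b\rfloor_n)\in\rho(S)$. The pleasant point is that this direction uses only the easy inclusion $Id(S)\subseteq\ker\Phi$, so I never need to compute $\ker\Phi$ exactly.

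Finally I would harvest the bijection. For injectivity, two distinct elements of $Irr(S^{comp})$ are distinct members of the $\mathbf{k}$-basis of $Di[X|S]=Di[X]/Id(S)$ by Theorem \ref{cd2}(iii), so their difference is not in $Id(S)$; by the forward implication of $(\star)$ they are not $\rho(S)$-related, so $\pi$ separates them. For surjectivity, given a class $[\lfloor w\rfloor_p]\in D$, I would reduce the monomial $\lfloor w\rfloor_p$ modulo $S^{comp}$: because every element of $S^{comp}$ is a monic binomial $\lfloor u\rfloor_m-\lfloor v\rfloor_n$, the tails stay monomials and each reduction step replaces a single monomial by a single monomial, so the process terminates at some $\lfloor a\rfloor_m\in Irr(S^{comp})$ with $\lfloor w\rfloor_p-\lfloor a\rfloor_m\in Id(S^{comp})=Id(S)$; the reverse implication of $(\star)$ then gives $(\lfloor w\rfloor_p,\lfloor a\rfloor_m)\in\rho(S)$, so $[\lfloor w\rfloor_p]=\pi(\lfloor a\rfloor_m)$ is attained. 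The main obstacle is exactly the reverse implication, the gap between ``equal in the dialgebra quotient'' and ``equal in the disemigroup quotient,'' and the device that closes it is the disemigroup-dialgebra $\mathbf{k}D$, whose very construction guarantees that the congruence classes form an honest $\mathbf{k}$-basis.
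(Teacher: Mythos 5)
Your proof is correct and matches the paper's intended argument: the paper gives no explicit proof, merely calling the theorem an application of Theorem \ref{cd2} together with the observation (stated just before the theorem) that every element of $S^{comp}$ is a monic binomial $\lfloor u\rfloor_m-\lfloor v\rfloor_n$ with $Id(S^{comp})=Id(S)$. Your writeup supplies exactly the details that this one-line proof leaves implicit --- the congruence--ideal dictionary $(\star)$, whose nontrivial direction you settle via the disemigroup-dialgebra $\mathbf{k}D$ and linear independence of $D$ in it, plus reduction by binomials for surjectivity --- so it is the same approach, fully fleshed out.
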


\subsection{Word problem for commutative dialgebras}

In this subsection, we present algorithms  for computing a (reduced) Gr\"{o}bner--Shirshov basis.
By using these algorithms and Composition-Diamond lemma for commutative dialgebras,
we  prove that for finitely presented commutative dialgebras (disemigroups), the word problem and the problem
whether two ideals  of $Di[X]$ are identical are solvable.

Let~$f\in Di[X]$ be a nonzero polynomial
 and let~$S$ be a subset of~$Di[X]$. We say~$f$ is \textit{reducible} by~$S$
if there exist a polynomial~$g$ in~$S$ and a
diword~$\lfloor u\rfloor_m$ in~$supp(f)$ such that~$\lfloor u\rfloor_m=\overline{\lfloor gb\rfloor_m}$ for some normal $g$-polynomial~$\lfloor gb\rfloor_m$.  Moreover, if $f$ is reducible by $S$ and $\alpha$ is the coefficient of $\lfloor u\rfloor_m$, then
$f\rightarrow_{_S} f-\frac{\alpha}{lc(g)}\lfloor gb\rfloor_m$ is called a \textit{one-step-reduction} by $S$. In particular,
we say $f$ is \textit{partially reducible} by $S$ if there exists a polynomial~$g$ in~$S$ and~$\overline{f}=\overline{\lfloor gb\rfloor_m}$ for some normal $g$-polynomial $\lfloor gb\rfloor_m$.
 If an element~$r\in Di[X]$ is
obtained from $f$ by finitely many one-step-reductions by $S$ and $r$ is not reducible by $S$, then we say
that $r$ is  a \textit{remainder} of $f$ modulo $S$.

In what follows, we assume that $X$ is a finite set. The following algorithm is an analogue of the Buchberger's Algorithm for polynomial algebras in \cite{bu65}.
\begin{algorithm}\label{alg1}(Algorithm for computing Gr\"{o}bner--Shirshov bases in $Di[X]$)

$\mathbf{input}: F=\{f_1,\cdots, f_m\}\subseteq Di[X]$.

$\mathbf{output}:$ A Gr\"{o}bner--Shirshov basis $S$ for $Id(F)$.

 \ \ \ \ $S:=\{lc(f)^{-1}f \mid 0 \neq f\in F\}$

 \ \ \ \ $\mathcal{G}:=\{s,(s,g) \mid s,g\in S\}$

 \ \ \ \ $\mathbf{for}$ $u\in \mathcal{G}$ $\mathbf{do}$

   \ \ \ \ \ \ \ \ $\mathbf{if}$ there exists a composition of $u$ $\mathbf{then}$

   \ \ \ \ \ \ \ \ \ \ \ \ $h:=$ a remainder of the composition of $u$ modulo $S$

    \ \ \ \ \ \ \ \ \ \ \ \ \ \ \ \ $\mathbf{if}$ $h\neq 0$ $\mathbf{then}$

    \ \ \ \ \ \ \ \ \ \ \ \  \ \ \ \ \ \ \ \ $h:=lc(h)^{-1}h$

    \ \ \ \ \ \ \ \ \ \ \ \  \ \ \ \ \ \ \ \ $\mathcal{G}:=\mathcal{G} \cup \{h\}\cup \{(h,s) \mid s\in S\} \cup \{(s,h) \mid s\in S\}$

     \ \ \ \ \ \ \ \ \ \ \ \  \ \ \ \ \ \ \ \ $\mathcal{S}:=\mathcal{S} \cup \{h\}$

    \ \ \ \ \ \ \ \ \ \ \ \ \ \ \ \ $\mathbf{end}$ $\mathbf{if}$

    \ \ \ \ \ \ \ \ $\mathbf{end}$ $\mathbf{if}$

  \ \ \ \ $\mathbf{end}$ $\mathbf{do}$

$\mathbf{return}$ $S$
\end{algorithm}

\begin{lemma}\label{lfgsb}
Let $>$ be a monomial-center ordering on $\lfloor X^+ \rfloor_{_1}\cup \lfloor XX \rfloor_{_{2}}$.
Then Algorithm \ref{alg1} terminates after finitely many steps and returns a Gr\"{o}bner--Shirshov basis in $Di[X]$.
\end{lemma}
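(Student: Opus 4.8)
The plan is to treat termination and correctness separately, while maintaining throughout the execution the invariant $Id(S)=Id(F)$. I would first verify this invariant: initially $S=\{lc(f)^{-1}f\mid 0\neq f\in F\}$, so $Id(S)=Id(F)$, and every polynomial $h$ appended to $S$ is a remainder of some composition of current members of $S$. Since each composition in Definition~\ref{compo} is a $Di[X]$-combination of elements of $S$, and each one-step-reduction subtracts a normal $S$-polynomial $\frac{\alpha}{lc(g)}\lfloor gb\rfloor_m\in Id(S)$, we get $h\in Id(S)$, so appending $h$ does not change $Id(S)$. Thus it suffices to prove that the output $S$ is a Gr\"obner--Shirshov basis.

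For correctness I would first observe that each inner remainder computation terminates: a one-step-reduction removes a diword $\lfloor u\rfloor_m\in supp(f)$ and introduces only strictly $<$-smaller diwords (as $g$, hence $\lfloor gb\rfloor_m$, is monic with $\overline{\lfloor gb\rfloor_m}=\lfloor u\rfloor_m$), so the finite multiset of diwords of $f$ strictly decreases in the multiset order over the well-ordered set $(\lfloor X^+\rfloor_{_1}\cup\lfloor XX\rfloor_{_2},>)$. Next I would show that a zero remainder certifies triviality. If a composition $h_0$ reduces as $h_0\to h_1\to\cdots\to h_k=0$, then $h_0=\sum_j \frac{\alpha_j}{lc(g_j)}\lfloor g_jb_j\rfloor_{m_j}$, and a short induction gives $\overline{h_j}\le\overline{h_0}$ for all $j$ (the cancelled diword lies in $supp(h_j)\subseteq\{\,\lfloor a\rfloor_n\le\overline{h_0}\,\}$ and its replacement is strictly smaller), whence $\overline{\lfloor g_jb_j\rfloor_{m_j}}\le\overline{h_0}$ for every $j$. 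This is exactly triviality modulo $S$ in the sense of Definition~\ref{dcgsb}. When the loop over $\mathcal G$ terminates, every $s\in S$ and every pair $(s,g)$ of final members of $S$ has been enrolled and processed with zero remainder, so every multiplication, special, equal, short intersection, equal multiplication, and long intersection composition is trivial modulo $S$; hence $S$ is a Gr\"obner--Shirshov basis, and by the invariant it is one for $Id(F)$.

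The genuine difficulty, and the step I expect to be the main obstacle, is termination of the outer loop, which I would handle by mimicking the proof of Theorem~\ref{thfgsb} and invoking that $Di[X]$ is Noetherian (Corollary~\ref{cn}, since $X$ is finite). The pivotal remark is that whenever a nonzero remainder $h$ is appended it is, by construction, not reducible by the current $S$; in particular $\overline{h}\in Irr(S)$, so $\overline{h}\neq\overline{\lfloor s\varepsilon\rfloor_n}=\overline{s}$ for every $s$ already present. Hence all leading monomials ever occurring in $S$ are pairwise distinct. Assuming for contradiction that the algorithm never halts, infinitely many elements are appended; by Proposition~\ref{pnotstrong} each non-strong one has leading monomial of the form $\lfloor x_ix_j\rfloor_2$, of which there are finitely many as $X$ is finite, so (by distinctness) only finitely many non-strong polynomials can be appended. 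Therefore infinitely many \emph{strong} polynomials $s_1,s_2,\dots$ with pairwise distinct leading monomials are appended.

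Finally I would derive the contradiction as in Theorem~\ref{thfgsb}. Setting $I_n=Id(\{\overline{s_1},\dots,\overline{s_n}\})$, an expression $\overline{s_{n+1}}=\sum_{i=1}^n(\overline{s_i}\dashv f_i+\overline{s_i}\vdash h_i+\alpha_i\overline{s_i})$ would force $\overline{s_{n+1}}=\overline{s_i}\vdash x$ or $\overline{s_{n+1}}=\overline{s_i}\dashv\lfloor a\rfloor_1$ for some $i\le n$, and since each $s_i$ is strong, Lemma~\ref{r1} yields $\overline{s_{n+1}}=\overline{\lfloor s_ib\rfloor_m}$ for a normal $s_i$-polynomial, contradicting $\overline{s_{n+1}}\in Irr(S)$. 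Thus $\overline{s_{n+1}}\notin I_n$, producing a strictly ascending chain $I_1\subsetneq I_2\subsetneq\cdots$ of ideals, which is impossible in the Noetherian dialgebra $Di[X]$. This contradiction shows the outer loop halts after finitely many steps, and together with the correctness argument completes the proof.
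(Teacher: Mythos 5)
Your proof is correct and follows essentially the same route as the paper's: correctness is read off from Definition \ref{dcgsb} (zero remainders certify triviality modulo $S$), and termination is by contradiction exactly as in the paper --- each appended remainder $h$ satisfies $\overline{h}\in Irr(S)$, only finitely many non-strong elements can be appended, and the strong ones produce a strictly ascending chain of ideals contradicting the Noetherianity of $Di[X]$ (Corollary \ref{cn}); in fact you spell out in full the ``method of Theorem \ref{thfgsb}'' that the paper merely cites, and you add useful details the paper omits (the invariant $Id(S)=Id(F)$ and termination of the inner reduction via the multiset order). One minor wording slip: a pair whose composition left a nonzero remainder $h$ was not ``processed with zero remainder''; rather, its composition becomes trivial modulo the enlarged set once $lc(h)^{-1}h$ is adjoined --- which is the point of the algorithm --- so the conclusion stands unchanged.
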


\begin{proof} (Correctness.) If Algorithm \ref{alg1} terminates after finitely many steps, then its correctness follows clearly from
Definition \ref{dcgsb}.

(Termination.) Suppose to the contrary that Algorithm \ref{alg1} does not terminate. Then, as the algorithm progress, we construct a set $S_i$ strictly larger than $S_{i-1}$ and obtain a strictly increasing infinite sequence
\begin{eqnarray*}
S_1\subsetneq S_2\subsetneq S_3 \subsetneq \cdots.
\end{eqnarray*}
Each $S_{i+1}$ is obtained from $S_i$ by adding $h\in Id(F)$ to $S_{i+1}$, where $h$ is a non-zero remainder of a composition of  elements  in $S_i$ modulo $S_i$. It is easy to see that $\overline{h}\in Irr(S_i)$. By the method similar to that in the proof of Theorem \ref{thfgsb}, we can obtain a infinite properly ascending chain of ideals, which is contradicts the fact that $Di[X]$ is Noetherian.
\end{proof}

\ \

By mimicking the method used in \cite{bu65}, we obtain the following algorithm.

\begin{algorithm}\label{alg2}(Algorithm for computing the reduced Gr\"{o}bner--Shirshov bases in $Di[X]$)

$\mathbf{input}:$ A Gr\"{o}bner--Shirshov basis $S=\{g_1,\cdots, g_n\}$ in $Di[X]$.

$\mathbf{output}:$ The reduced Gr\"{o}bner--Shirshov basis $RG$ for $Id(S)$.

  \ \ \ \ $C:=S$

 \ \ \ \ $\mathbf{for}$ $f\in C$ $\mathbf{do}$

   \ \ \ \  \ \ \ \ $S:=S \backslash \{f\}$

    \ \ \ \ \ \ \ \ $\mathbf{if}$ $f$ is not partially reducible by $S$ $\mathbf{then}$

    \ \ \ \ \ \ \ \ \ \ \ \ $S:=S\cup \{f\}$

     \ \ \ \ \ \ \ \ $\mathbf{end}$ $\mathbf{if}$

  \ \ \ \ $\mathbf{end}$ $\mathbf{do}$

   \ \ \ \ $RG:=\emptyset$

   \ \ \ \ $\mathbf{for}$ $h\in S$ $\mathbf{do}$

    \ \ \ \ \ \ \ \ $G:=S \backslash \{h\}$

    \ \ \ \ \ \ \ \ $r:=$ a remainder of $h$ modulo $G$

   \ \ \ \ \ \ \ \ $\mathbf{if}$ $r\neq 0$ $\mathbf{then}$ 
   
   \ \ \ \ \ \ \ \ \ \ \ \ $RG:=RG\cup \{r\}$
   
    \ \ \ \ \ \ \ \  $\mathbf{end}$ $\mathbf{if}$
   
   \ \ \ \ $\mathbf{end}$ $\mathbf{do}$

  $\mathbf{return}$ $RG$
\end{algorithm}

The following lemma follows from Theorem \ref{trgsb} directly.

\begin{lemma}\label{lfrgsb}
Let $>$ be a monomial-center ordering on $\lfloor X^+ \rfloor_{_1}\cup \lfloor XX \rfloor_{_{2}}$.
Then Algorithm \ref{alg2} terminates after finitely many steps and returns the reduced Gr\"{o}bner--Shirshov basis in $Di[X]$.
\end{lemma}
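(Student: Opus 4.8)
The plan is to read Algorithm \ref{alg2} as a literal implementation of steps (ii) and (iii) of Remark \ref{rrgsb}: its input is already a Gröbner--Shirshov basis, so step (i) is skipped, the first \textbf{for}-loop carries out the passage to a minimal basis, and the second \textbf{for}-loop carries out the passage to the reduced basis. Accordingly I would prove termination first and then correctness, in each case reducing the work to results already established in Theorem \ref{trgsb}, Lemma \ref{lpredu}, and Lemma \ref{lmgsb}.

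For termination I would note that both loops range over the fixed finite set $S$ (and its frozen copy $C$), so each is entered only finitely often; it then suffices that the two subroutines inside the loops halt. The partial-reducibility test is decidable in finitely many steps because, by Lemma \ref{lorder}, any normal $g$-polynomial $\lfloor gb\rfloor_m$ with $\overline{\lfloor gb\rfloor_m}=\overline{f}$ forces $\widetilde{g}$ to divide $\widetilde{f}$, leaving only finitely many candidate pairs $(g,b)$ once $X$ is finite. The remainder computation halts because each one-step reduction $f\rightarrow_{_S} f-\frac{\alpha}{lc(g)}\lfloor gb\rfloor_m$ removes the monomial $\lfloor u\rfloor_m$ from $supp(f)$ and replaces it only by monomials strictly smaller than $\lfloor u\rfloor_m$; since $>$ is a well ordering on $\lfloor X^+ \rfloor_{_1}\cup \lfloor XX \rfloor_{_{2}}$, its multiset extension is well-founded, so no infinite reduction chain exists.

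For correctness I would argue in two stages, one per loop. The first loop repeatedly deletes from $S$ any element $f$ that is partially reducible by the current $S\backslash\{f\}$, i.e. with $\overline{f}\notin Irr(S\backslash\{f\})$; by induction the current $S$ stays a Gröbner--Shirshov basis for $Id(S)$, since each such deletion is licensed by Lemma \ref{lpredu}. At the end every retained $f$ is non-partially-reducible by the final $S\backslash\{f\}$, so the final $S$ is a minimal Gröbner--Shirshov basis $M$ for $Id(S)$. In the second loop, for each $h\in M$ the computed remainder $r$ of $h$ modulo $G=M\backslash\{h\}$ is exactly a polynomial $s'$ of the decomposition $h=s'+s''$ from Lemma \ref{lmgsb}: by definition of a remainder $supp(r)\subseteq Irr(M\backslash\{h\})$ and $h-r\in Id(M\backslash\{h\})$, while minimality of $M$ keeps $\overline{h}\in Irr(M\backslash\{h\})$ so the leading term is never reduced and $\overline{r}=\overline{h}\neq0$. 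Hence $RG=\{r\mid h\in M\}$ has the same leading monomials as $M$ and satisfies the very hypotheses under which the proof of Theorem \ref{trgsb} shows the set $S_2$ to be a reduced Gröbner--Shirshov basis for $Id(S)$; that argument applies verbatim with $M$ in the role of $S_1$, so $RG$ is a reduced Gröbner--Shirshov basis, and by the uniqueness asserted in Theorem \ref{trgsb} it is \emph{the} reduced Gröbner--Shirshov basis.

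The main obstacle I anticipate lies in the first loop rather than the second: I must check that greedily deleting partially reducible elements while iterating over the frozen copy $C$ really yields a \emph{minimal} basis and never destroys the Gröbner--Shirshov property. The delicate point is monotonicity of $Irr$: at the moment an element $f$ is retained it is tested against the current $S\backslash\{f\}$, which is a superset of the final basis, so its reducer set only shrinks afterwards; since non-reducibility by a larger reducer set is inherited by any smaller subset, every retained $f$ remains non-partially-reducible by the final $M\backslash\{f\}$, giving minimality. Matching the remainder of the second loop to the $s'$ of Lemma \ref{lmgsb} is comparatively routine, the only care being that $G=M\backslash\{h\}$ is generally not itself a Gröbner--Shirshov basis, so I rely on the defining irreducibility of a remainder by $G$ rather than on uniqueness of normal forms modulo $G$.
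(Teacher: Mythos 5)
Your proposal is correct and takes essentially the same route as the paper: the paper dismisses this lemma with the single remark that it ``follows from Theorem \ref{trgsb} directly,'' and your argument is exactly the intended expansion of that citation --- the first loop is the minimalization step justified by Lemma \ref{lpredu}, the second loop produces the decomposition $h=r+(h-r)$ required by Lemma \ref{lmgsb}, and uniqueness comes from Theorem \ref{trgsb}. The termination details you supply (finiteness of the loops, well-foundedness of one-step reduction via the multiset extension of the monomial-center well-ordering) are left implicit in the paper but are the natural completion of its one-line proof.
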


By Theorems $\ref{trgsb}$ and $\ref{thfgsb}$ and Algorithm $\ref{alg2}$, it follows that
\begin{corollary}\label{cfrgsb}
Let $X$ be a finite set, $I$ an ideal of $Di[X]$ and $>$ a monomial-center ordering on $\lfloor X^+ \rfloor_{_1}\cup \lfloor XX \rfloor_{_{2}}$.
Then there exists a unique finite reduced Gr\"{o}bner--Shirshov basis for I.
\end{corollary}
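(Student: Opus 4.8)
The plan is to assemble three earlier ingredients so that existence, uniqueness, and finiteness are each secured by a separate result, leaving essentially no new computation to perform. Since a monomial-center ordering is fixed and $I$ is an ideal of $Di[X]$, Theorem \ref{trgsb} already guarantees that $I$ possesses a \emph{unique} reduced Gr\"{o}bner--Shirshov basis, which I will denote $S^{red}$. This settles both existence and uniqueness at once, so the only remaining claim is that $S^{red}$ is finite.

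To obtain finiteness I would first use the hypothesis that $X$ is finite. By Theorem \ref{thfgsb}, for the given monomial-center ordering the ideal $I$ admits a \emph{finite} Gr\"{o}bner--Shirshov basis $S$ with $Id(S)=I$. I would then feed $S$ as input to Algorithm \ref{alg2}; by Lemma \ref{lfrgsb} this algorithm terminates after finitely many steps and returns the reduced Gr\"{o}bner--Shirshov basis $RG$ for $Id(S)=I$.

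The final step is to check that $RG$ is finite and to identify it with $S^{red}$. Inspecting Algorithm \ref{alg2}, the output $RG$ is produced by first discarding certain elements of the finite set $S$ and then, for each surviving $h$, adjoining a single remainder of $h$ modulo the rest; in particular $|RG|\le |S|<\infty$, so $RG$ is finite. Since $RG$ is a reduced Gr\"{o}bner--Shirshov basis for $I$, the uniqueness half of Theorem \ref{trgsb} forces $RG=S^{red}$. Hence the unique reduced Gr\"{o}bner--Shirshov basis for $I$ is finite, which is exactly the assertion.

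The only step I expect to require any care is the finiteness bookkeeping in the last paragraph: one must confirm that Algorithm \ref{alg2} never strictly enlarges the cardinality of its input, so that a finite $S$ yields a finite $RG$. Everything else is an immediate appeal to Theorems \ref{trgsb} and \ref{thfgsb} together with Lemma \ref{lfrgsb}, with no new compositions, orderings, or triviality arguments to be produced; indeed, one could alternatively bypass the algorithm entirely by combining Theorem \ref{trgsb} with the remark following Theorem \ref{thfgsb}, which already records that every reduced Gr\"{o}bner--Shirshov basis is finite when $X$ is finite.
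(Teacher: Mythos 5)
Your proposal is correct and follows essentially the same route as the paper, which derives Corollary \ref{cfrgsb} directly from Theorems \ref{trgsb} and \ref{thfgsb} together with Algorithm \ref{alg2}. Your added bookkeeping (that Algorithm \ref{alg2} never enlarges the cardinality of its input, and that uniqueness identifies its output with $S^{red}$) just makes explicit what the paper leaves implicit.
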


\begin{theorem}\label{tword}
The word problem for any finitely presented commutative dialgebra is algorithmically solvable.
\end{theorem}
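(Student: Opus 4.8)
The plan is to turn the word problem into a single terminating Gr\"obner--Shirshov computation. A finitely presented commutative dialgebra can be written as $Di[X\mid S]=Di[X]/Id(S)$ with $X$ finite and $S=\{f_1,\dots,f_m\}$ a finite monic subset of $Di[X]$, and deciding whether two given polynomials $f,g\in Di[X]$ represent the same element of $Di[X\mid S]$ is exactly deciding whether $f-g\in Id(S)$. I would fix once and for all the deg-lex-center ordering, which by the earlier discussion is a monomial-center ordering on $\lfloor X^+ \rfloor_{_1}\cup \lfloor XX \rfloor_{_{2}}$, so that all the machinery of Theorem \ref{cd2} applies.

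First I would run Algorithm \ref{alg1} on the input $S$ to produce a Gr\"obner--Shirshov basis $G$ for $Id(S)$; by Lemma \ref{lfgsb} this halts after finitely many steps and returns such a $G$, with $Id(G)=Id(S)$. (If a canonical object is wanted one may pass further to the finite reduced basis through Algorithm \ref{alg2} by Corollary \ref{cfrgsb}, but any finite $G$ is enough here.) The decisive property of $G$ is supplied by part (iii) of Theorem \ref{cd2}: the set $Irr(G)$ is a $\mathbf{k}$-basis of $Di[X\mid S]$, so each class has a well-defined canonical representative lying in the span of $Irr(G)$.

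Next, given the query pair, I would compute a \emph{remainder} $r$ of the single polynomial $f-g$ modulo $G$ by repeated one-step-reductions. Because $G$ is monic, each one-step-reduction cancels one diword $\lfloor u\rfloor_n\in supp$ and introduces in its place only diwords strictly smaller than $\lfloor u\rfloor_n$ in the monomial-center ordering; since that ordering is a well ordering, the multiset of monomials strictly decreases and the process must terminate in an $r$ that is not reducible by $G$, i.e. with $supp(r)\subseteq Irr(G)$. By construction $f-g-r$ is a $\mathbf{k}$-linear combination of normal $G$-polynomials, so $f-g\equiv r \pmod{Id(G)}$ with $Id(G)=Id(S)$. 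I then claim that $f$ and $g$ coincide in $Di[X\mid S]$ if and only if $r=0$: if $r=0$ then $f-g\in Id(S)$ at once, while if $f-g\in Id(S)=Id(G)$ then $r\in Id(G)$ as well, and were $r\neq0$, part (ii) of Theorem \ref{cd2} would force $\overline{r}=\overline{\lfloor sb\rfloor_m}$ for some normal $G$-polynomial $\lfloor sb\rfloor_m$, contradicting $\overline{r}\in Irr(G)$. Hence the test ``$r=0$'' decides membership, and since every step takes place in the finitely generated setting over $\mathbf{k}$ and thus is effective, the whole procedure is an algorithm.

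The main obstacle is not any individual computation but confirming that these ingredients interlock soundly. Two points carry the weight: termination of reduction, which rests entirely on the well-ordering property of the monomial-center ordering (via the multiset extension), and the soundness of the zero-test, which rests on $Irr(G)$ being a basis. The latter is what rescues the argument from the fact that a remainder is not \emph{a priori} a unique function of the computation path; regardless of the reductions chosen, any irreducible remainder of an element of $Id(S)$ must vanish, because a nonzero one would have its leading monomial outside $Irr(G)$. Thus the correctness of the word-problem algorithm is a direct consequence of Theorem \ref{cd2} together with the termination guarantees of Lemmas \ref{lfgsb} and \ref{lfrgsb} and the finiteness provided by Theorem \ref{thfgsb}.
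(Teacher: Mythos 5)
Your proposal is correct and follows exactly the paper's route: run Algorithm \ref{alg1} (terminating by Lemma \ref{lfgsb} since $X$ is finite) to get a Gr\"obner--Shirshov basis $G$ with $Id(G)=Id(S)$, reduce the query polynomial to an irreducible remainder, and decide membership by testing whether that remainder is zero, with soundness coming from Theorem \ref{cd2}. You have merely filled in the details the paper leaves implicit (termination of reduction via the well ordering, and the zero-test argument via $supp(r)\subseteq Irr(G)$ against part (ii) of Theorem \ref{cd2}), all of which are accurate.
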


\begin{proof} Let $D=Di[X|S]$ be a finitely presented commutative dialgebra. By Algorithm \ref{alg1}, we can obtain
a Gr\"{o}bner--Shirshov basis $G$ from $S$. For any $f\in Di[X]$,
by Theorem \ref{cd2}, $f\in Id(S)$ if and only if the remainder of $f$ modulo $G$ is equal to zero.
\end{proof}

\begin{theorem}\label{tideal}
Let $X$ be a finite set, $S$ and $T$ two finite subsets in $Di[X]$. The problem whether $Id(S)$ and $Id(T)$ are identical is algorithmically solvable.
\end{theorem}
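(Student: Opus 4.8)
The plan is to reduce the question of ideal equality to the comparison of canonical objects, namely the unique finite reduced Gr\"{o}bner--Shirshov bases furnished by Corollary \ref{cfrgsb}. Since $X$ is finite, every ideal of $Di[X]$ is finitely generated and $Di[X]$ is Noetherian by Corollary \ref{cn}; this is exactly what makes the relevant algorithms halt. First I would run Algorithm \ref{alg1} on the finite input $S$ to produce a Gr\"{o}bner--Shirshov basis $G_S$ for $Id(S)$, which terminates after finitely many steps by Lemma \ref{lfgsb}. I would then feed $G_S$ into Algorithm \ref{alg2} to obtain the reduced Gr\"{o}bner--Shirshov basis $R_S$ for $Id(S)$, the termination being guaranteed by Lemma \ref{lfrgsb}. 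Carrying out the identical two-stage computation on $T$ yields the reduced Gr\"{o}bner--Shirshov basis $R_T$ for $Id(T)$.

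The crux of the argument is the equivalence
$$
Id(S)=Id(T) \ \Longleftrightarrow \ R_S=R_T .
$$
The implication from right to left is immediate, since a reduced Gr\"{o}bner--Shirshov basis generates the ideal it is attached to, so $R_S=R_T$ forces $Id(S)=Id(R_S)=Id(R_T)=Id(T)$. The forward implication is precisely where the canonicity supplied by Theorem \ref{trgsb} and Corollary \ref{cfrgsb} is indispensable: if $Id(S)=Id(T)=:I$, then $R_S$ and $R_T$ are both finite reduced Gr\"{o}bner--Shirshov bases for the \emph{same} ideal $I$ with respect to the fixed monomial-center ordering, and uniqueness forces $R_S=R_T$ as sets of polynomials.

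Once this equivalence is in place, the decision procedure is transparent: I would simply compare the two finite sets $R_S$ and $R_T$ of polynomials term by term and declare $Id(S)=Id(T)$ if and only if they coincide. Each comparison involves only finitely many monomials from $\lfloor X^+ \rfloor_{_1}\cup \lfloor XX \rfloor_{_{2}}$ with coefficients in $\mathbf{k}$, so the test is effective and completes in finitely many steps.

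The main obstacle to watch is not the bookkeeping of the comparison but the justification that the normal forms produced are genuinely ordering-dependent canonical representatives. Concretely, one must ensure that both $R_S$ and $R_T$ are computed relative to the \emph{same} monomial-center ordering on $\lfloor X^+ \rfloor_{_1}\cup \lfloor XX \rfloor_{_{2}}$; otherwise the uniqueness statement of Corollary \ref{cfrgsb} does not apply and the set equality $R_S=R_T$ loses its meaning. Granting this, every remaining step is a routine invocation of the termination lemmas and the uniqueness theorem, so no further delicate estimate is required.
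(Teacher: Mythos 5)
Your proposal is correct and follows essentially the same route as the paper: run Algorithm \ref{alg1} and then Algorithm \ref{alg2} on each of $S$ and $T$ to obtain their finite reduced Gr\"{o}bner--Shirshov bases, and then invoke the uniqueness guaranteed by Corollary \ref{cfrgsb} to conclude that $Id(S)=Id(T)$ if and only if the two reduced bases coincide. Your additional remarks---checking both directions of the equivalence and insisting that both computations use the same fixed monomial-center ordering---are sound elaborations of details the paper leaves implicit.
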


\begin{proof} By Algorithms \ref{alg1} and \ref{alg2}, we can obtain the finite reduced Gr\"{o}bner--Shirshov bases $G_1$ for $Id(S)$
and $G_2$ for $Id(T)$ from $S$ and $T$, respectively. By Corollary \ref{cfrgsb}, $Id(S)=Id(T)$ if and only if $G_1=G_2$.
\end{proof}

\section{Lifting commutative Gr\"{o}bner--Shirshov bases to associative ones}

Let $Di\langle X\rangle$ be the free dialgebra over a field $\mathbf{k}$ generated by a set $X$. Recall that
$X^+$ is the free semigroup generated by $X$ without unit. Write
$$
[X^+]_\omega:=\{[u]_m \mid u\in X^+, m\in \mathbb{Z}^+, 1\leq m \leq |u|\}.
$$
It is well known that $[X^+]_\omega$ is a $\mathbf{k}$-basis of $Di\langle X\rangle$.

We first recall from the following result
concerning the Gr\"{o}bner--Shirshov bases theory for dialgebras in \cite{CZ}.

\begin{lemma}\citep[Theorem 4.4]{CZ}\label{cd}
Let $S$ be a monic subset of $Di\langle X\rangle$,
$>$ a monomial-center ordering on $[X^+]_\omega$ and
$Id(S)$ the ideal of $Di\langle X\rangle$ generated by $S$. Then $S$ is a Gr\"{o}bner--Shirshov basis in $Di\langle X\rangle$ if and only if
the set
$$
Irr_A(S)=\{[u]_n\in [X^+]_\omega\mid [u]_n\neq \overline{[asb]_m} \ \mbox{ for any normal } S\mbox{-polynomial}\ [asb]_m \ in \ Di\langle X\rangle\}
$$
is a $\mathbf{k}$-basis of the quotient dialgebra $Di\langle X| S\rangle:=Di\langle X\rangle/Id(S)$.
\end{lemma}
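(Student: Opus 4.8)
The plan is to mirror the proof of Theorem \ref{cd2}, transporting each auxiliary lemma from the commutative to the associative (non-commutative) setting. As there, the cleanest route inserts an intermediate condition between the two stated ones: that every nonzero $f\in Id(S)$ satisfies $\overline{f}=\overline{[asb]_m}$ for some normal $S$-polynomial $[asb]_m$. One then proves the cycle: ``$S$ is a Gr\"obner--Shirshov basis'' $\Rightarrow$ ``every nonzero element of $Id(S)$ has its leading monomial realized by a normal $S$-polynomial'' $\Rightarrow$ ``$Irr_A(S)$ is a $\mathbf{k}$-basis of $Di\langle X|S\rangle$'' $\Rightarrow$ ``$S$ is a Gr\"obner--Shirshov basis.''

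First I would set up the multiplicative infrastructure, i.e.\ the associative analogues of Lemmas \ref{r1} and \ref{l51}. The new feature relative to the commutative case is that $Di\langle X\rangle$ is non-commutative, so a normal $S$-polynomial carries a two-sided envelope $[asb]_m$ rather than the one-sided $\lfloor sb\rfloor_m$; accordingly I must show that multiplying $[asb]_m$ on either side, by either $\vdash$ or $\dashv$, again lands among normal $S$-polynomials with controlled leading term, and that once $S$ is closed under its compositions every such product is trivial modulo $S$. Throughout, the center index $m$ has to be updated using the dialgebra axioms (\ref{eq00}) whenever a $\vdash$-factor abuts a $\dashv$-factor. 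The analogue of Lemma \ref{l00}---that any $f$ is a $\mathbf{k}$-combination of $Irr_A(S)$-diwords together with normal $S$-polynomials, all of leading monomial $\le\overline{f}$---then follows by the standard leading-monomial induction and yields the spanning half of (iii).

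The core of the proof is the associative analogue of Lemma \ref{lkey2}: if two normal $S$-polynomials $[a_1s_1b_1]_{m_1}$ and $[a_2s_2b_2]_{m_2}$ have a common leading monomial $[w]_m$, then their difference reduces to $0$ modulo $S$ through normal $S$-polynomials of strictly smaller leading monomial. This is exactly where the triviality of compositions is consumed, and I would organize it by how the two leading words $\overline{s_1},\overline{s_2}$ sit inside $w$: the disjoint case (handled directly, as in Case~1 of Lemma \ref{lkey2}), the inclusion case (one leading word a subword of the other, invoking the inclusion composition), and the genuine overlap case (a nonempty suffix of one equal to a prefix of the other, invoking the intersection composition). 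In each case the difference is rewritten so that the triviality of the relevant composition, combined with the multiplication- and special-composition triviality from the previous step, produces the required reduction.

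Granted this machinery, the three implications go through just as in Theorem \ref{cd2}: for the first, write $f=\sum_i\alpha_i[a_is_ib_i]_{m_i}$, collect the summands whose normal $S$-polynomial attains the top leading monomial $[w]_m$, and use the Lemma \ref{lkey2}-analogue to decrease either $[w]_m$ or the number of such summands, inducting on that pair; the second combines spanning with a linear-independence argument (a nontrivial relation among $Irr_A(S)$-diwords inside $Id(S)$ would have a leading monomial that is both irreducible and realized by a normal $S$-polynomial); and the third observes that any composition $h$ lies in $Id(S)$, so its $Irr_A(S)$-part vanishes in the quotient, leaving $h$ trivial modulo $S$. The main obstacle is the overlap-plus-center bookkeeping in the Lemma \ref{lkey2}-analogue: enumerating how two leading diwords can share a leading monomial is already the crux of the ordinary associative Composition--Diamond lemma, and the dialgebra center index roughly doubles the case analysis by forcing one to track the distinguished letter's position (inside $\overline{s_1}$, inside $\overline{s_2}$, or in the overlap) through every rewriting.
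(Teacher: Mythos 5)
You should note at the outset that the paper does not prove Lemma \ref{cd} at all: it is imported verbatim from \citep[Theorem 4.4]{CZ}, so there is no internal proof to compare your argument against. That said, your outline reproduces the architecture of the actual proof in \cite{CZ} (which is also the architecture of Theorem \ref{cd2} in this paper): insert the intermediate condition that every nonzero $f\in Id(S)$ has $\overline{f}=\overline{[asb]_m}$ for some normal $S$-polynomial, prove the cycle of three implications, and rest everything on (a) closure lemmas showing products of normal $S$-polynomials are trivial modulo $S$ (the analogues of Lemmas \ref{r1} and \ref{l51}), (b) a spanning lemma (the analogue of Lemma \ref{l00}), and (c) the key lemma that two normal $S$-polynomials sharing a leading monomial differ by something trivial of smaller leading monomial (the analogue of Lemma \ref{lkey2}), organized by the disjoint/inclusion/overlap cases. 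This is the correct and, in fact, the standard route. The one caveat worth making explicit: the real substance in the dialgebra setting is not the case analysis you describe but the \emph{definition} of normal $S$-polynomial itself --- in $Di\langle X\rangle$ an envelope $[asb]_m$ of $s\in S$ is only ``normal'' for constrained positions of the center relative to $a$, $\overline{s}$, $b$ (the associative counterpart of ``strong'' polynomials and of the restriction $|\widetilde{s}|=1$ for $\vdash$-multiplication in (\ref{nsw})), because $\vdash$-multiplication shifts the center and can destroy the leading term. Your sketch flags this bookkeeping (``the center index $m$ has to be updated'') but leaves it entirely open, and until those admissibility constraints are pinned down one cannot even state which compositions (multiplication, inclusion, intersection) must be imposed; that is precisely where the work in \cite{CZ} lies. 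As a blind reconstruction of the strategy your proposal is sound; as a proof it defers the hard part.
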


Now, we construct a Gr\"{o}bner--Shirshov basis in $Di\langle X\rangle$ by
lifting a Gr\"{o}bner--Shirshov basis in $Di[X]$ and adding some relations.

Let $X=\{x_i\mid i\in I\}$  be a well-ordered set. Note that
$\lfloor X^+\rfloor_1\cup \lfloor XX \rfloor_{_{2}}\subseteq [X^+]_\omega$.
We consider the natural map $\gamma:[X^+]_\omega\rightarrow \lfloor X^+\rfloor_1\cup \lfloor XX \rfloor_{_{2}}$ defined by, for any $[a]_m\in [X^+]_\omega$,
\begin{displaymath}
 \gamma([a]_m)=
\begin{cases}
\text{ $\lfloor a\rfloor_2$, if $|a|=2$ and $m=2$,} \\
\text{ $\lfloor a\rfloor_1$, otherwise,}
\end{cases}
\end{displaymath}
 and define a $\mathbf{k}$-linear map $\delta: Di[X]\rightarrow Di\langle X\rangle$
 which is given by
 $$
 \sum_i\alpha_i\lfloor a_i\rfloor_{m_i} \mapsto \sum_i\alpha_i\lfloor a_i\rfloor_{m_i},\ \ \ \ \alpha_i\in \mathbf{k},\
\lfloor a_i\rfloor_{m_i}\in  \lfloor X^+\rfloor_1\cup \lfloor XX \rfloor_{_{2}}.
 $$
Let $\succ$ be any monomial-center ordering on $\lfloor X^+\rfloor_1\cup \lfloor XX \rfloor_{_{2}}$ and $>_{d}$ the deg-lex-center ordering on $[X^+]_\omega$.
For any $[a]_m,[b]_n\in [X^+]_\omega$, define
\begin{equation}\label{liftorder} [a]_m>[b]_n \ \Leftrightarrow
\begin{cases}
\text{ $ \gamma([a]_m)\succ \gamma([b]_n)$,} \mbox{ or}\\
\text{ $\gamma([a]_m)=\gamma([b]_n),\  [a]_m>_d[b]_n$.}
\end{cases}
\end{equation}

It is clear that $>$ is a monomial-center ordering on $[X^+]_\omega$.
In what follows, a monomial-center ordering $\succ$ on $\lfloor X^+\rfloor_1\cup \lfloor XX \rfloor_{_{2}}$ and
the ordering $>$ on $[X^+]_\omega$ defined as (\ref{liftorder}) will be used unless otherwise stated.

For any $\lfloor a\rfloor_m,\lfloor b\rfloor_n\in \lfloor X^+\rfloor_1\cup \lfloor XX \rfloor_{_{2}}$,
if $\lfloor a\rfloor_m\succ\lfloor b\rfloor_n$, then
$\delta(\lfloor a\rfloor_m)>\delta(\lfloor b\rfloor_n)$. Thus, for any $f\in Di[X]$,
$\delta(\overline{f})=\overline{\delta(f)}$.
Let $s\in Di[X]$, $\lfloor sb\rfloor_m$  be a normal $s$-polynomial in $Di[X]$.
Then $\overline{\delta(\lfloor sb\rfloor_m)}
=\delta(\overline{\lfloor sb\rfloor_m})=\delta(\lfloor \widetilde{s}b\rfloor_m)$.

Let  $S$ be a monic subset of $Di[X]$, $\lfloor v\rfloor =\lfloor x_{i_1}x_{i_2}\cdots x_{i_r}\rfloor\in \lfloor X^+\rfloor, \ x_{i_l}\in X,\ 1\leq l\leq r$.
We set
$$
U(\lfloor v\rfloor):=\lfloor\{x_j\in X \mid i_1<j<i_r \}^*\rfloor,
$$
$$
G_S=:\{\delta(\lfloor sc\rfloor_m)\mid  s\in S, \lfloor c\rfloor\in U(\widetilde{s})\mbox{ and }\ \lfloor sc\rfloor_m\ \mbox{ is normal }S\mbox{-polynomial in }
Di[X]\}.
$$
For example,
let $\lfloor u\rfloor=x_1x_2x_2x_3x_4x_5x_6x_7x_7$ and $\lfloor v \rfloor=x_2x_6$.
Then $\lfloor u\rfloor=x_1x_2\lfloor v x_3x_4x_5\rfloor x_7x_7$, where $x_3x_4x_5 \in U(\lfloor v\rfloor)$.
That is to say, $\lfloor u\rfloor$ can split into three parts by $\lfloor v \rfloor$ and
the center part is $\lfloor v a \rfloor$ for some $\lfloor a\rfloor \in U(\lfloor v\rfloor)$.

Throughout the following proof, we follow the notation used in \cite{CZ}.

\begin{lemma}\label{llift}
Let $S$ be a monic subset of $Di[X]$ and $Y=\lfloor X^+\rfloor_1\cup \lfloor XX \rfloor_{_{2}}$.
Then
$$
Y\cap ([X^+]_\omega\backslash Irr_A(G_S))=Y\backslash \delta(Irr(S)).
$$
\end{lemma}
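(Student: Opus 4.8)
The plan is to rephrase the claimed equality as a pointwise equivalence and then to match normal polynomials on the two sides. Since $Irr(S)\subseteq Y$ and $\delta$ restricts to the inclusion $Y\hookrightarrow [X^+]_\omega$, we have $\delta(Irr(S))\subseteq Y$, so both displayed sets lie in $Y$ and the identity is equivalent to: for every $w=\lfloor u\rfloor_n\in Y$,
$$
w\notin Irr_A(G_S)\quad\Longleftrightarrow\quad w\notin\delta(Irr(S)).
$$
Unwinding the definitions, the right-hand condition says that $\lfloor u\rfloor_n=\overline{\lfloor sb'\rfloor_m}=\lfloor\widetilde{s}b'\rfloor_m$ for some normal $S$-polynomial $\lfloor sb'\rfloor_m$ in $Di[X]$, and the left-hand condition says that $\lfloor u\rfloor_n=\overline{[agb]_{m'}}$ for some normal $G_S$-polynomial $[agb]_{m'}$ in $Di\langle X\rangle$; here $g=\delta(\lfloor sc\rfloor_m)\in G_S$ with $c\in U(\widetilde{s})$, so by the remarks preceding the lemma $\overline{g}=\delta(\lfloor\widetilde{s}c\rfloor_m)$, a sorted diword whose least and greatest letters coincide with those of $\widetilde{s}$.

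First I would reduce both conditions to factorizations of the \emph{single} sorted word $\lfloor u\rfloor$. On the commutative side, $\lfloor u\rfloor=\lfloor\widetilde{s}b'\rfloor$ means nothing more than that the multiset underlying $u$ contains that of $\widetilde{s}$. On the associative side the crucial point is that $w\in Y$ is a sorted word and, by the choice $c\in U(\widetilde{s})$, the block $\overline{g}$ has least letter $x_{i_1}$ and greatest letter $x_{i_r}$, where $\widetilde{s}=\lfloor x_{i_1}\cdots x_{i_r}\rfloor$; hence $\lfloor u\rfloor=a\,\lfloor\widetilde{s}c\rfloor\,b$ forces $a$ to consist only of letters $\le x_{i_1}$ and $b$ only of letters $\ge x_{i_r}$, i.e.\ $\lfloor\widetilde{s}c\rfloor$ occurs as a contiguous central block of the sorted word. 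This is exactly the three-part decomposition illustrated after the definition of $U(\widetilde{s})$.

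The heart of the proof is a bijection between the two descriptions of the context. Given a commutative context $b'$, I would split its letters into those strictly between $x_{i_1}$ and $x_{i_r}$, which (sorted) give $c\in U(\widetilde{s})$, those $\le x_{i_1}$, which give $a$, and those $\ge x_{i_r}$, which give $b$; conversely $(a,c,b)$ recombine as a multiset into $b'$. A direct check gives $a\,\lfloor\widetilde{s}c\rfloor\,b=\lfloor\widetilde{s}b'\rfloor=\lfloor u\rfloor$ with both sides sorted, so the two candidate leading monomials share the same associative word and are identified by $\delta$. It then remains to verify that $\lfloor sb'\rfloor_m$ is a normal $S$-polynomial exactly when the associated $[agb]_{m'}$ is a normal $G_S$-polynomial, and that the centers agree, $m'=n=m$.

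I expect this last matching of centers and normality to be the main obstacle, and it is where both notions of normal polynomial --- the one recalled before Lemma~\ref{lorder} and the one for $Di\langle X\rangle$ from \cite{CZ} --- must be used together. I would run the verification in the two cases available to an element of $Y$. When $n=2$ one has $|u|=2$, so either $\overline{s}=\lfloor x_ix_j\rfloor_2$ with empty context, or $|\widetilde{s}|=1$ and the context is a single letter; in the latter case the normal $G_S$-polynomial is $g\vdash y$ (or $y\vdash g$), which by $|\overline{g}|=1$ places the center at position $2$, matching $s\vdash y$. When $n=1$ the polynomial $s$ is either taken unchanged ($b'=\varepsilon$) or is strong; here the interior part of the context is precisely what is recorded in $c$ and hence in $G_S$, while the letters $\le x_{i_1}$ and $\ge x_{i_r}$ appear as the two-sided context of the strong polynomial $g$, whose center then sits at position $1$ by the behaviour of $\dashv$ on strong polynomials in $Di\langle X\rangle$ established in \cite{CZ} --- exactly the center of $s\dashv\lfloor b'\rfloor_1$, which is computed in $Di[X]$ via Lemma~\ref{r1}. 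Checking that ``strong'' is preserved under $\delta$ and under enlarging the context, that the degenerate subcase $\widetilde{s}=\lfloor x^r\rfloor$ is absorbed by a fixed convention in the split, and that no normal $G_S$-polynomial with leading monomial in $Y$ can arise otherwise, then yields both inclusions.
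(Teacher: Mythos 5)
Your proposal is correct and follows essentially the same route as the paper's proof: both reduce the set identity to a pointwise equivalence on $Y$, then match normal $G_S$-polynomials in $Di\langle X\rangle$ with normal $S$-polynomials in $Di[X]$ by splitting the sorted commutative context $b'$ into the letters $\le x_{i_1}$, the interior letters (which give $c\in U(\widetilde{s})$), and the letters $\ge x_{i_r}$. The paper compresses the center/normality bookkeeping you single out as the main obstacle into a two-step chain of equivalences, with the non-strong case (where $a,b,c$ are forced to be empty) noted separately, relying on \cite{CZ} for the dialgebra-side definitions just as you do.
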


\begin{proof}
Let $[w]_n \in [X^+]_\omega$. Then
\begin{align*}
&[w]_n\in Y\cap ([X^+]_\omega\backslash Irr_A(G_S)) \\
\Leftrightarrow\ &
 \mbox{There exists a normal}\ G_S\mbox{-polynomial}\ [agb]_n \ \mbox{in}
 \ Di\langle X\rangle,  \mbox{ where } a,b\in X^*,\ g\in G_S,\ \mbox{say},\
 \\
  & g=\delta(\lfloor sc\rfloor_m)  \ \mbox{for some normal} \ S\mbox{-polynomial} \ \lfloor sc\rfloor_m\ \ \mbox{in} \ Di[X],\  s\in S,\ \lfloor c\rfloor\in U(\widetilde{s}),\ \mbox{such that} \\ &[w]_n=\overline{[a\delta(\lfloor sc\rfloor_m)b]_n}=[a\delta(\lfloor \widetilde{s}c\rfloor_m)b]_n\in Y.\ \mbox{Note that if } s \mbox{ is not strong, then } c \mbox{ is empty}\\
   &\mbox{and } \bar s=\lfloor xx'\rfloor_2,\ x,x'\in X,\ x\leq x'.\ \mbox{If this is the case, then } a,b,c \mbox{ are all empty. }\\
\Leftrightarrow\ &\mbox{There exist}\ s\in S,\ \lfloor c\rfloor\in U(\widetilde{s}), \ \lfloor a\rfloor,\lfloor b\rfloor\in \lfloor X^*\rfloor,\
\lfloor a\lfloor \widetilde{s}c\rfloor b\rfloor\in \lfloor X^+\rfloor\ \mbox{and a normal}\ S\mbox{-polynomial}
\\
&\lfloor sacb\rfloor_n \ \mbox{in} \ Di[X] \ \mbox{such that}\ [w]_n=\delta(\lfloor a\lfloor \widetilde{s}c\rfloor b\rfloor_n)=\delta(\overline{\lfloor sacb\rfloor_n})\in Y. \\
\Leftrightarrow\ &[w]_n\in Y\backslash \delta(Irr(S)).
\end{align*}
\end{proof}

\begin{theorem}
Let $X=\{x_i\mid i\in I\}$ be a well-ordered set, $S$ be a Gr\"{o}bner--Shirshov basis in
$Di[X]$, and $W$ consist of the following polynomials in $Di\langle X\rangle$:
\begin{align*}
&[x_ix_j]_2-[x_jx_i]_2,\ \ [x_ix_j]_1-[x_jx_i]_1, & (&i,j\in I,\ i>j),\\
&[x_ix_jx_k]_2-[x_ix_jx_k]_1, \ \ [x_ix_jx_k]_3-[x_ix_jx_k]_1, & (&i,j,k\in I, \ i\leq j \leq k).
\end{align*}
Then with the monomial-center ordering on $[X^+]_\omega$ defined as (\ref{liftorder}), the set
 $G_S\bigcup W$
is a Gr\"{o}bner--Shirshov basis in $Di\langle X\rangle$.
\end{theorem}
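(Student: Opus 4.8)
The plan is to invoke the Composition--Diamond lemma for associative dialgebras (Lemma \ref{cd}): it suffices to prove that $Irr_A(G_S\cup W)$ is a $\mathbf{k}$-basis of $Di\langle X|G_S\cup W\rangle$. I would split this into two tasks, namely to first identify the set $Irr_A(G_S\cup W)$ explicitly, and then to verify that it both spans and is linearly independent in the quotient.

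For the first task, observe that a normal $(G_S\cup W)$-polynomial arises either from $G_S$ or from $W$, so its leading monomial is the leading monomial of a normal $G_S$-polynomial or of a normal $W$-polynomial; taking complements gives $Irr_A(G_S\cup W)=Irr_A(G_S)\cap Irr_A(W)$. I would then show $Irr_A(W)=Y$, where $Y=\lfloor X^+\rfloor_1\cup\lfloor XX\rfloor_2$. The inclusion $Y\subseteq Irr_A(W)$ is checked by noting that every leading monomial of a normal $W$-polynomial is either an unsorted diword or a sorted diword of length $\ge 3$ whose center lies to the right of the first letter, while a commutative diword in $Y$ is sorted with center $1$ (or of length $2$ with center $2$) and hence matches none of these. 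For the reverse inclusion I would argue that any $[w]_n\notin Y$ is reducible by $W$: if $w$ is not sorted, an adjacent out-of-order pair is rewritten by one of the length-$2$ relations $[x_ix_j]_m-[x_jx_i]_m$ embedded on both sides by $\dashv$ and $\vdash$; if $w$ is sorted of length $\ge 3$ with center $n\ge 2$, the center is migrated to position $1$ by a suitable embedding of a length-$3$ relation $[x_ix_jx_k]_m-[x_ix_jx_k]_1$. In each case the rewritten monomial is the leading one under the ordering (\ref{liftorder}), since the two competing terms share the same $\gamma$-value and the deg-lex-center tie-break selects the unsorted word, respectively the larger center. Combining $Irr_A(W)=Y$ with Lemma \ref{llift}, which (after complementing within $Y$) reads $Y\cap Irr_A(G_S)=\delta(Irr(S))$, yields $Irr_A(G_S\cup W)=\delta(Irr(S))$.

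For the second task, spanning follows from the standard reduction procedure for associative dialgebras, the analogue of Lemma \ref{l00}, so $\delta(Irr(S))$ spans $Di\langle X|G_S\cup W\rangle$. For linear independence I would use the linear map $\gamma:Di\langle X\rangle\to Di[X]$. First verify that $\gamma$ is a homomorphism of dialgebras by comparing, on basis diwords, the associative products $[u]_p\dashv[v]_q=[uv]_p$ and $[u]_p\vdash[v]_q=[uv]_{|u|+q}$ with the commutative multiplication table of $Di[X]$. Since $\gamma$ annihilates every generator of $W$ and $\gamma\circ\delta=\mathrm{id}$ on $Di[X]$, one gets $\gamma(W)=0$ and, for $g=\delta(\lfloor sc\rfloor_m)\in G_S$, $\gamma(g)=\lfloor sc\rfloor_m\in Id(S)$. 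Hence $\gamma(Id(G_S\cup W))\subseteq Id(S)$, so $\gamma$ descends to $\bar\gamma:Di\langle X|G_S\cup W\rangle\to Di[X|S]$ with $\bar\gamma(\delta(\lfloor u\rfloor_n))=\lfloor u\rfloor_n$. Now if a combination $\sum_i\alpha_i\,\delta(\lfloor u_i\rfloor_{n_i})$ with distinct $\lfloor u_i\rfloor_{n_i}\in Irr(S)$ vanishes in $Di\langle X|G_S\cup W\rangle$, applying $\bar\gamma$ gives $\sum_i\alpha_i\lfloor u_i\rfloor_{n_i}=0$ in $Di[X|S]$; because $S$ is a Gr\"obner--Shirshov basis, Theorem \ref{cd2} says $Irr(S)$ is a $\mathbf{k}$-basis of $Di[X|S]$, forcing all $\alpha_i=0$. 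Thus $Irr_A(G_S\cup W)=\delta(Irr(S))$ is a basis, and Lemma \ref{cd} completes the proof.

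The main obstacle I anticipate is the rigorous computation $Irr_A(W)=Y$: one must show that the finitely many length-$2$ and length-$3$ relations in $W$, embedded on both sides by the associative operations, suffice to reduce every non-commutative diword to its sorted, center-collapsed form, and with the correct leading monomial at each step. The bookkeeping of how left- and right-multiplication shift the center position is delicate, and one should likewise confirm that the verification of $\gamma$ being a dialgebra homomorphism covers every combination of the two centers under both $\vdash$ and $\dashv$. These steps are routine but care-intensive, and they carry the whole argument.
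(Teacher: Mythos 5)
Your proposal is correct, and its skeleton matches the paper's: both reduce the theorem to showing that $Irr_A(G_S\cup W)$ is a $\mathbf{k}$-basis of $Di\langle X| G_S\cup W\rangle$ via Lemma \ref{cd}, and both identify $Irr_A(G_S\cup W)=\delta(Irr(S))$ by combining $Irr_A(W)=Y$ with Lemma \ref{llift} (the paper simply asserts $Irr_A(W)=Y$ as clear, so your sketch of that step is, if anything, more detailed than the published one). Where you genuinely diverge is in how the basis property of $\delta(Irr(S))$ is established. Writing $T=G_S\cup W$, the paper observes that $Di\langle X|T\rangle$ is a commutative dialgebra because of $W$, uses the universal property of $Di[X]$ to produce $\sigma:Di[X|S]\to Di\langle X|T\rangle$, uses the universal property of $Di\langle X\rangle$ (through the same map $\gamma$ you employ) to produce $\zeta$ in the opposite direction, and checks that $\sigma$ and $\zeta$ are mutually inverse; the resulting isomorphism transports the basis $Irr(S)$ to $\delta(Irr(S))$, giving spanning and independence simultaneously. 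You instead obtain spanning from the reduction algorithm (the associative-dialgebra analogue of Lemma \ref{l00}) and independence from the one-sided retraction $\bar\gamma$ --- which is precisely the paper's $\zeta$, except that you verify directly that $\gamma$ is a dialgebra homomorphism on basis diwords rather than deriving it from the universal property. Your route avoids constructing $\sigma$ and checking mutual inverseness, at the cost of invoking the spanning lemma in $Di\langle X\rangle$; the paper's route yields the stronger structural byproduct $Di[X|S]\cong Di\langle X|T\rangle$. Both arguments carry their real burden at the same place, namely the combinatorial verification of $Irr_A(W)=Y$ and the attendant center bookkeeping, which neither treatment spells out in full.
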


\begin{proof}
Let $T=G_S\bigcup W$ and
$Y=\lfloor X^+\rfloor_1\cup \lfloor XX \rfloor_{_{2}}$.
It is clear that $Irr_A(W)=Y$.
By Theorem \ref{cd2}, $Irr(S)$ is a $\mathbf{k}$-basis of the commutative dialgebra $Di[X|S]$.

Noting that $Di\langle X|T \rangle$ is a commutative dialgebra, we have a homomorphism
$$
\theta:\ Di[X]\rightarrow Di\langle X|T \rangle, \ \lfloor a\rfloor_{m}\mapsto \lfloor a\rfloor_{m}+Id(T),\ \lfloor a\rfloor_{m}\in Y,
$$
which is induced by $
X\rightarrow Di\langle X|T \rangle, \ x\mapsto x+Id(T)
$. Since $\theta(S)=0$, we have a homomorphism
\begin{eqnarray*}
\sigma:\ Di[X|S]\rightarrow Di\langle X|T \rangle, \
\sum_i\alpha_i\lfloor a_i\rfloor_{m_i}+Id(S)
\mapsto \delta(\sum_i\alpha_i\lfloor a_i\rfloor_{m_i})+Id(T),  \ \alpha_i\in \mathbf{k},\ \lfloor a_i\rfloor_{m_i}\in Y.
\end{eqnarray*}

On the other hand, the following homomorphism
$$
\xi:\ Di\langle X \rangle\rightarrow Di[X|S], \ [a]_m\mapsto \gamma([a]_m)+Id(S),\ [a]_m\in [X^+]_{\omega},
$$
is induced by $
X\rightarrow Di[X|S], \ x\mapsto x+Id(S)
$.  Consider the natural homomorphism
$$
\eta:\ Di\langle X \rangle\rightarrow Di\langle X|T \rangle, \ [a]_m\mapsto [a]_m+Id(T),\ [a]_m\in [X^+]_{\omega}.
$$
Since $\xi(T)=0$, we have a homomorphism $\zeta:\ Di\langle X|T \rangle \rightarrow Di[X|S]$ such that $\zeta\eta=\xi$ and $\zeta$ is exactly the inverse of $\sigma$.
This shows that $\sigma$ is an isomorphism.
Thus $\delta(Irr(S))=\{\delta(\lfloor a\rfloor_n)\mid \lfloor a\rfloor_n\in Irr(S)\}$ is a $\mathbf{k}$-basis of the dialgebra
$Di\langle X|T \rangle$. Now, for any $[w]_n \in [X^+]_\omega$, by Lemma \ref{llift}, we have
\begin{align*}
[w]_n\notin Irr_A(T) &\Leftrightarrow [w]_n\notin Irr_A(W),\mbox{or}\ [w]_n\in Irr_A(W)\ \mbox{and} \  [w]_n\notin Irr_A(G_S)\\
&\Leftrightarrow [w]_n\notin Y,\mbox{or}\ [w]_n\in Y\cap ([X^+]_\omega\backslash Irr_A(G_S))\\
&\Leftrightarrow [w]_n\notin Y,\mbox{or}\ [w]_n\in Y\backslash \delta(Irr(S)) \\
&\Leftrightarrow [w]_n\notin \delta(Irr(S)).
\end{align*}
Hence, $\delta(Irr(S))=Irr_A(T)$. It follows that $Irr_A(T)$ is a $\mathbf{k}$-basis of the dialgebra $Di\langle X|T \rangle$.
By Lemma \ref{cd}, $T$ is a Gr\"{o}bner--Shirshov basis in $Di\langle X\rangle$.
\end{proof}

\ \

\noindent{\bf Acknowledgement}
We wish to express our thanks to the referee for helpful suggestions and comments.

\ \

\bibliographystyle{elsarticle-harv}

\begin{thebibliography}{30}
\bibitem[{Adams and Loustaunau(1994)}]{AL}
W.~W. Adams, P.~Loustaunau, An introduction to {G}r\"obner bases,
  Graduate Studies in Mathematics, vol. 3,  American Mathematical
  Society, 1994.

\bibitem[{Bergman(1978)}]{be78}
G.~M. Bergman, The diamond lemma for ring theory, Adv. in Math. 29(2),178--218, 1978.

\bibitem[{Bokut(1976)}]{bo76}
L.~A. Bokut, Imbeddings into simple associative algebras, Algebra i Logika
  15(2),117--142, 245, 1976.

\bibitem[{Bokut and Chen(2008)}]{BC}
L.~A. Bokut, Y.~Chen, Gr\"obner-{S}hirshov bases: Some new results, in: Proceedings of the Second International Congress
in Algebra and Combinatorics, World Scientific, pp. 35--56, 2008.

\bibitem[{Bokut and Chen(2014)}]{BokutChenBook}
L.~A. Bokut, Y.~Chen, Gr\"obner-Shirshov Bases and Shirshov Algorithm,
  Educational tutorial lecture notes, Novosibirsk State University, 2014.

\bibitem[{Bokut et al.(2010)}]{Di}
L.~A. Bokut, Y.~Chen, C.~Liu, Gr\"obner-{S}hirshov bases for dialgebras,
  Internat. J. Algebra Comput. 20(3), 391--415, 2010.

\bibitem[{Bokut et al.(2000)}]{BoFKK00}
L.~A. Bokut, Y.~Fong, V.-F. Ke, P.~S. Kolesnikov, Gr\"obner and
  {G}r\"obner-{S}hirshov bases in algebra, and conformal algebras, Fundam.
  Prikl. Mat. 6(3), 669--706, 2000.

\bibitem[{Bokut and Kolesnikov(2000)}]{BK03}
L.~A. Bokut, P.~S. Kolesnikov, Gr\"obner-{S}hirshov bases: from inception to
  the present time, Zap. Nauchn. Sem. S.-Peterburg. Otdel. Mat. Inst. Steklov.
  (POMI) 272(Vopr. Teor. Predst. Algebr i Grupp. 7) 26--67, 345, 2000.

\bibitem[{Bokut and Kolesnikov(2004)}]{BK05a}
L.~A. Bokut, P.~S. Kolesnikov, Gr\"obner-{S}hirshov bases, conformal algebras,
  and pseudo-algebras, Sovrem. Mat. Prilozh. (13, Algebra) 92--130, 2004.

\bibitem[{Bokut and Kukin(1994)}]{BKu94}
L.~A. Bokut, G.~P. Kukin, Algorithmic and combinatorial algebra,
  Mathematics and its Applications, vol. 255, Kluwer Academic
  Publishers Group, Dordrecht, 1994.

\bibitem[{Bokut and Shum(2005)}]{bo05}
L.~A. Bokut, K.~P. Shum, Gr\"obner and {G}r\"obner-{S}hirshov bases in algebra:
  an elementary approach, Southeast Asian Bull. Math. 29(2), 227--252, 2005.

\bibitem[{Buchberger(1965)}]{bu65}
B.~Buchberger, Ein algorithmus zum auffinden der basiselemente des
  restklassenringes nach einem nulldimensionalen polynomideal, Ph.D. thesis,
  University of Innsbruck, 1965.

\bibitem[{Buchberger(1970)}]{bu70}
B.~Buchberger, Ein algorithmisches {K}riterium f\"ur die {L}\"osbarkeit eines
  algebraischen {G}leichungssystems, Aequationes Math. 4, 374--383, 1970.

\bibitem[{Buchberger et al.(1982)}]{BuCL}
B.~Buchberger, G.~Collins, R.~Loos, R.~Albrecht, Computer algebra, symbolic and
  algebraic computation, Computing Supplementum, vol. 4,
  Springer-Verlag, New York, 1982.

\bibitem[{Buchberger and Winkler(1998)}]{BuW}
B.~Buchberger, F.~Winkler, Gr\"obner bases and applications, London
  Mathematical Society Lecture Note Series, vol. 251, Cambridge University
  Press, Cambridge, 1998.


\bibitem[{Cox et al.(2015)}]{CLO}
D.~A. Cox, J.~Little, D.~O'Shea, Ideals, varieties, and algorithms: An
  introduction to computational algebraic geometry and commutative algebra,
  Undergraduate Texts in Mathematics, Springer, Cham, 4th edition, 2015.

\bibitem[{Eisenbud(1995)}]{Ei}
D.~Eisenbud, Commutative algebra: With a view toward algebraic geometry, Graduate Texts in Mathematics,
vol. 150, Springer-Verlag, New York, 1995.

\bibitem[{Eisenbud et al.(1998)}]{EiPS}
D.~Eisenbud, I. Peeva, B. Sturmfels, Non-commutative Gr\"obner bases for commutative algebras.
Proc. Am. Math. Soc. 126, 687--691, 1998.

\bibitem[{Hironaka(1964)}]{Hi64}
H.~Hironaka, Resolution of singulatities of an algebtaic variety over a field
  if charac-teristic zero, {I}, {II}, Math. Ann. 79, 109--203, 205--326, 1964.

\bibitem[{Kolesnikov(2008)}]{Ko}
P.~S. Kolesnikov, Varieties of dialgebras, and conformal algebras, Sibirsk.
  Mat. Zh. 49(2), 322--339, 2008.

\bibitem[{Loday(1993)}]{Lo93}
J.-L. Loday, Une version non commutative des alg\`ebres de {L}ie: les
  alg\`ebres de {L}eibniz, Enseign. Math. (2) 39(3-4), 269--293, 1993.

\bibitem[{Loday(1995)}]{Lo95}
J.-L. Loday, Alg\`ebres ayant deux op\'erations associatives (dig\`ebres), C.
  R. Acad. Sci. Paris S\'er. I Math. 321(2), 141--146, 1995.

\bibitem[{Loday et al.(2001)}]{Lo99}
J.-L. Loday, A.~Frabetti, F.~Chapoton, F.~Goichot, Dialgebras and related
  operads, Lecture Notes in Mathematics, vol. 1763, Springer-Verlag,
  Berlin, 2001.

\bibitem[{Newman(1942)}]{Nm}
M.~H.~A. Newman, On theories with a combinatorial definition of
  ``equivalence.'', Ann.  Math. (2) 43, 223--243, 1942.

\bibitem[{Pozhidaev(2009)}]{Po}
A.~P. Pozhidaev, 0-dialgebras with bar-unity and nonassociative {R}ota-{B}axter
  algebras, Sibirsk. Mat. Zh. 50(6), 1356--1369, 2009.

\bibitem[{Shirshov(1962)}]{Sh62b}
A.~I. Shirshov, Some algorithmic problems for {$\varepsilon $}-algebras,
  Sibirsk. Mat. \v Z. 3, 132--137, 1962.

\bibitem[{Shirshov(2009)}]{Shir3}
A.~I. Shirshov, Selected works of {A}. {I}. {S}hirshov, Translated from the
  Russian by Murray Bremner and Mikhail V. Kotchetov, Edited by Leonid A.
  Bokut, Victor Latyshev, Ivan Shestakov and Efim Zelmanov, Contemporary
  Mathematicians, Birkh\"auser Verlag, Basel, 2009.

\bibitem[{Zhang and Chen(2017)}]{CZ} G.~Zhang, Y.~Chen, A new Composition-Diamond lemma for dialgebras, Algebra Colloquium 24(2), 323-350, 2017.

\bibitem[{Zhuchok(2010)}]{Zhuchok}
A.~V. Zhuchok, Free commutative dimonoids, Algebra Discrete Math. 9(1), 109--119, 2010.

\end{thebibliography}

\end{document}